\title{\bfseries Mass-conserving diffusion-based dynamics on graphs
}
\author{Jeremy Budd$^1$, Yves van Gennip$^1$\\
\\
\small{$^1$Delft Institute of Applied Mathematics (DIAM)}\\
\small{Technische Universiteit Delft,} \\\small{Delft, The Netherlands.}\\
   \small{ \textup{\textsf{j.m.budd-1@tudelft.nl \qquad y.vangennip@tudelft.nl}}}\\
}
\date{}
\numberwithin{equation}{section}
\newtheorem{thm}{Theorem}[section]
\newtheorem{mydef}[thm]{Definition}
\newtheorem{prop}[thm]{Proposition}
\newtheorem{lem}[thm]{Lemma}
\newtheorem{cor}[thm]{Corollary}
\newtheorem*{nb}{Note}
\newtheorem*{keywords}{Key Words}
\newtheorem*{subjclass}{2010 AMS Classification}
\newcommand{\be}{\begin{equation}}
\newcommand{\ee}{\end{equation}}
\newcommand{\V}{\mathcal{V}}
\newcommand{\bigO}{\mathcal{O}}
\DeclareMathOperator*{\argmin}{argmin}
\DeclareMathOperator*{\argmax}{argmax}
\DeclareMathOperator{\Ext}{Ext}
\DeclareMathOperator{\GL}{GL_{\varepsilon}}
\DeclarePairedDelimiter\ceil{\lceil}{\rceil}
\DeclarePairedDelimiter\ip{\langle}{\rangle_\V}
\DeclarePairedDelimiter\ipp{\langle\langle}{\rangle\rangle_\V}
\begin{document}
\label{firstpage}
\maketitle
\begin{abstract}
An emerging technique in image segmentation, semi-supervised learning, and general classification problems concerns the use of phase-separating flows defined on finite graphs. This technique was pioneered in Bertozzi and Flenner (2012), which used the Allen--Cahn flow on a graph, and was then extended in Merkurjev, Kosti\'c and Bertozzi (2013) using instead the Merriman--Bence--Osher (MBO) scheme on a graph. In previous work by the authors, Budd and Van Gennip (2019), we gave a theoretical justification for this use of the MBO scheme in place of Allen--Cahn flow, showing that the MBO scheme is a special case of a ``semi-discrete" numerical scheme for Allen--Cahn flow.   

In this paper, we extend this earlier work, showing that this link via the semi-discrete scheme is robust to passing to the mass-conserving case. Inspired by Rubinstein and Sternberg (1992), we define a mass-conserving Allen--Cahn equation on a graph. Then, with the help of the tools of convex optimisation, we show that our earlier machinery can be applied to derive the mass-conserving MBO scheme on a graph as a special case of a semi-discrete scheme for mass-conserving Allen--Cahn. We give a theoretical analysis of this flow and scheme, proving various desired properties like existence and uniqueness of the flow and convergence of the scheme, and also show that the semi-discrete scheme yields a choice function for solutions to the mass-conserving MBO scheme. Finally, we exhibit initial work towards extending to the multi-class case, which in future work we seek to connect to recent work on multi-class MBO in Jacobs, Merkurjev and Esedo\= glu (2018).
\end{abstract}

\begin{keywords}\emph{
Allen--Cahn equation, threshold dynamics, graph dynamics, mass constrained motion, convex optimisation. }
\end{keywords}

\begin{subjclass}
\emph{34B45, 35R02, 34A12, 65N12, 
05C99 }
\end{subjclass}
\section{Introduction}
In this paper, we will investigate variants of the Allen--Cahn equation and Merriman--Bence--Osher (MBO) scheme on a graph, modified to ensure that mass is conserved along trajectories. First, we formulate on a graph the mass-conserving Allen--Cahn flow devised by Rubinstein and Sternberg \cite{RS}, noticing that mass conservation continues to hold in the discrete setting. Next, following our earlier work in \cite{Budd} and drawing on work in Van Gennip \cite{OKMBO}, we show that a formulation of a mass-conserving MBO scheme arises naturally as a special case of a semi-discrete scheme for the mass-conserving Allen--Cahn flow with the double-obstacle potential. We then examine various theoretical properties of this mass-conserving semi-discrete scheme.

Finally, we exhibit results towards generalising to the multi-class case, formulating mass-conserving Allen--Cahn flow with a ``multi-obstacle" potential and thereby deriving a mass conserving multi-class semi-discrete scheme which we hope to link to the multi-class MBO scheme. 
\subsection{Contributions of this work}
In this paper we have:
\begin{itemize}
\item Following \cite{RS}, defined a mass-conserving graph Allen--Cahn flow with double-obstacle potential (Definition \ref{mACdef}) and proved that it conserves mass (Proposition \ref{mACmc}).
\item Extended the analysis in \cite{Budd} to this new flow, proving a weak form, an explicit form, and uniqueness and existence theory for this flow (Theorems \ref{ACobsweak}, \ref{explicitthm}, \ref{uniqueness}, and \ref{existence}, respectively) and via the semi-discrete scheme proved that solutions exhibit monotonic decrease of the Ginzburg--Landau energy, and Lipschitz regularity (Theorems \ref{GLthm} and \ref{mACLips}, respectively).
\item Defined a mass-conserving semi-discrete scheme for this flow (Definition \ref{mSDdef}) and as in \cite{Budd} proved that this scheme is equivalent to a variational scheme of which the MBO scheme is a special case (Theorems \ref{obsMMprop} and \ref{obsMMconverse}).
\item Used the tools of convex optimisation to characterise the solutions of this variational scheme (Theorems \ref{lamb1soln} and \ref{lamble1soln}) and proved that in the MBO limit the mass-conserving semi-discrete solutions converge to an MBO solution, providing a choice function for the mass-conserving MBO solutions (Theorem \ref{sdMBOconv}).
\item Following \cite{Budd}, derived a Lyapunov functional for the mass-conserving semi-discrete scheme (Theorem~\ref{Lyapthm}) and proved convergence of the scheme to the Allen--Cahn trajectory (Theorem \ref{SDlimit}), giving a novel proof of a key lemma from the method in \cite{Budd}.
\item Defined non-mass-conserving and mass-conserving graph Allen--Cahn flows with a \emph{multi}-obstacle potential, and corresponding multi-class semi-discrete schemes (Definitions \ref{mcACdef} and \ref{mcSDdef}, respectively).
\end{itemize}

Though we worked in the framework of \cite{Budd}, this paper extends upon \cite{Budd} in a number of key ways. Most directly, we have shown a new result, that shows that the link discovered in \cite{Budd} between Allen--Cahn flow and the MBO scheme is robust in the prescence of a further constraint. Moreover, this was not a trivial extension: the mass conservation condition substantially increased the difficulty of some of the key results of \cite{Budd}. In particular, finding the solutions of the variational form and thereby proving the equivalence to the semi-discrete scheme for Allen--Cahn, which are both fairly straightforward in \cite{Budd}, required a substantial employment of the tools of convex optimisation. Other results, such as Theorems \ref{ACobsweak} and \ref{sdMBOconv}, also required non-trivial extensions to the proofs of their counterparts in \cite{Budd} (indeed, the latter being in that context sufficiently clear as to not be needed to be stated). Furthermore, for the proof of convergence we have exhibited a novel proof technique for one of the key lemmas. Finally, the final section on multi-obstacle Allen--Cahn was entirely new work.
\subsection{Background} 
The primary background for this work is \cite{Budd}, in which the authors developed a general framework for linking graph Allen--Cahn flow and the graph MBO scheme via a semi-discrete scheme. We showed that the MBO scheme was a special time-discretisation of Allen--Cahn flow with a double-obstacle potential, and investigated properties of this Allen--Cahn flow and time-discretisation scheme. This paper will follow that framework, introducing a mass constraint.

Mass conservation (a.k.a. volume preservation) as a constraint on the MBO scheme and on Allen--Cahn flow arises in a number of contexts, which we shall here briefly survey. For a wider survey of general MBO schemes and Allen--Cahn flow in both the continuum and graph contexts, see \cite{Budd} and \cite{vGGOB} and the references therein.  

In the continuum context, mass-conserving dynamics of the Ginzburg--Landau energy have a long history, dating back to \cite{Cahn} and \cite{CH} and the development of the Cahn--Hilliard equation. In the 1990s, Rubinstein and Sternberg \cite{RS} devised a mass-conserving variant of the Allen--Cahn equation as an alternative to the Cahn--Hilliard equation. We will use this alternative equation as the basis for our mass-conserving graph Allen--Cahn equation.
% [[give physical justification of mass-constrained Ginzburg--Landau, mention Cahn--Hilliard and Rubinstein and Sternberg]]. 

Just as the original MBO scheme was introduced as a method for mean curvature flow in Merriman, Bence, and Osher \cite{MBO1992}, mass-constrained MBO schemes in the continuum have been investigated as methods for studying mass-constrained mean curvature flow. It was first introduced as such in Ruuth and Wetton \cite{RW}, and the convergence of this scheme has been recently studied by Laux and Schwartz \cite{LauxSchwartz}, who showed that as the time-step goes to zero the algorithm of Ruuth and Wetton converges (up to a subsequence) to the weak formulation of mass-constrained mean curvature flow defined in \cite{MSS}. 
%[[Mention mass-constrained MBO as a method for mass-constrained MCF]].

%Mention references in BvG and vGGOB 
Turning to the graph context, recently Van Gennip \cite{OKMBO} studied a graph analogue of the Ohta--Kawasaki functional, and devised a modified graph MBO scheme (with the ordinary MBO scheme as a special case) and mass-conserving graph MBO scheme as a method for minimising this functional without and with a mass conservation constraint, respectively. We will show that the mass-conserving MBO scheme yielded by applying the technique from \cite{Budd} to the Rubinstein and Sternberg Allen--Cahn equation on a graph coincides with this definition of the mass-conserving MBO scheme on graphs (up to non-uniqueness of MBO solutions).%[[more Ohta--Kawasaki stuff?]].

Finally, graph Allen--Cahn flow and MBO schemes have received much attention in the last decade as algorithms for image processing and semi-supervised learning, stemming from pioneering work by Bertozzi and Flenner \cite{BF} and Merkurjev, Kostić and Bertozzi \cite{MKB}, respectively, and extended to the multi-class case in  Merkurjev \emph{et. al.} \cite{mcBertozzi}. Bae and Merkurjev \cite{BM} studied the effect of mass conservation constraints on these algorithms, inspiring Jacobs, Merkurjev, and Esedo\=glu \cite{Auction} to employ ``auction dynamics" as a novel way to solve a mass-conserving multi-class graph MBO scheme. In this work we extend the link developed in \cite{Budd} between these image-processing algorithms to this mass-conserving setting in the two-class case, and demonstrate how to define our framework in the multi-class case. In future work we seek to extend the theory of this paper to the multi-class case, and so link up with this body of work.
\subsection{Paper outline}

We here give a brief overview of the rest of this paper.

In section \ref{gwork}, we outline our notation and key definitions, and then briefly describe the link from \cite{Budd} that we shall be extending to the mass-conserving case in this paper.

In section \ref{mAC}, we define mass-conserving graph Allen--Cahn flow, following Rubinstein and Sternberg's \cite{RS} definition of mass-conserving Allen--Cahn flow in the continuum. We extend the analysis in \cite{Budd} to this mass-conserving setting, rigorously defining this flow with the double-obstacle potential, and proving explicit and weak forms, as well as existence and uniqueness, for this flow.

In section \ref{mSD}, we define the mass-conserving semi-discrete scheme, which we formulate variationally, as in \cite{Budd}, to link to the MBO scheme. We then use the tools of convex optimisation to solve this variational equation, first in the case where the objective function is linear (i.e. MBO) and next (using strong duality) in the general semi-discrete case where the objective function is strictly convex. This task of solving the variational equation is significantly more involved than its counterpart in \cite{Budd}. We show that as the strictly convex case converges to the MBO case, the corresponding solutions converge to a unique MBO solution, providing a choice function for the mass-conserving MBO scheme. We lastly follow \cite{Budd} in deriving a Lyapunov functional for the mass-conserving semi-discrete scheme and thereby discussing the long-time behaviour of the scheme.

In section \ref{SDconvsec}, we follow the method of \cite{Budd} to prove convergence of the mass-conserving semi-discrete scheme to mass-conserving Allen--Cahn flow as the time-step tends to zero. We also give a novel proof of one of the lemmas in this proof. We then use this convergence to prove monotonicity of the Ginzburg--Landau functional along mass-conserving Allen--Cahn trajectories, and prove the Lipschitz regularity of those trajectories. 

Finally, in section \ref{mcAC} we make first steps towards future work concerning a multi-class MBO scheme, defining a graph Allen--Cahn flow with the ``multi-obstacle" potential and a corresponding multi-class semi-discrete scheme, with and without mass conservation.
%[[Too brief??]]

 %[[Classification and machine learning stuff, Auction Dynamics]].    
\section{Groundwork}\label{gwork}

We here rewrite the abridged summary of \cite{vGGOB} from \cite{Budd}. We henceforth consider graphs $G:=(V,E)$ which are finite, simple, connected, undirected and positively weighted, with vertex set $V$, edge set $E\subseteq V^2$ and with weights $\{\omega_{ij}\}_{ij\in E}$ satisfying $\omega_{ij} = \omega_{ji}$ and $\omega_{ij}\geq 0$ for all $ij \in E$. We extend $\omega_{ij}=0$ when $ij\notin E$. We define function spaces on $G$  (where $X\subseteq \mathbb{R}$, and $T\subseteq\mathbb{R}$ an interval): 
\begin{align*}
	&\V := \left\{ u: V\rightarrow\mathbb{R} \right\} , &\V_{X} := \left\{ u: V\rightarrow X \right\}&,  &\mathcal{E} := \left\{ \varphi: E\rightarrow\mathbb{R} %|\varphi_{ij} = -\varphi_{ji} 
\right\}.&\\
	&\V_{t\in T} := \left\{ u: T\rightarrow\V \right\} , &\V_{X,t\in T} := \left\{ u: T\rightarrow \V_X \right\}.&
\end{align*}
We introduce a Hilbert space structure on these function spaces.  For $r\in [0,1]$, and writing $d_i:=\sum_j \omega_{ij}$ for the degree of vertex $i$,  we define inner products on $\V$ and $\mathcal{E}$%\footnote{We here take, so as to preserve the $\Gamma$-convergence result in \cite{vGB}, $q=1$ in the definitions in \cite{vGGOB}.}
\begin{align*}
	&\ip{u,v} := \sum_{i\in V} u_i v_i d_i^r, &\langle\varphi,\phi\rangle_\mathcal{E}:=\frac{1}{2}\sum_{i,j\in V} \varphi_{ij} \phi_{ij}\omega_{ij}&
\end{align*} and define the inner product on $\V_{t\in T}$ (or $\V_{X,t\in T}$) \[ (u,v)_{t\in T}:=\int_T \left\langle u(t),v(t)\right\rangle_\V \;dt = \sum_{i\in V} d_i^r\, (u_i,v_i)_{L^2(T;\mathbb{R})}.\] 
We then induce inner product norms $||\cdot||_\V$, $||\cdot||_\mathcal{E}$ and $||\cdot||_{t\in T}$ and also define on $\V$ the norm $ ||u||_\infty := \max_{i\in V} |u_i|$.
Next, we define the $L^2$ and $L^\infty$ spaces: \begin{align*} &L^2(T;\V) : =\left\{ u\in\V_{t\in T} \mid ||u||_{t\in T} <\infty \right\}, \\&L^\infty(T;\V) : =\left\{ u\in\V_{t\in T}\mid \exists C\in\mathbb{R}, || u(t)||_{\infty} <C\text{ for a.e. } t \in T  \right\}.\end{align*}
Finally, for $T$ an open interval, we define the {Sobolev space} $H^1(T;\V)$ as the set of $u \in L^2(T;\V)$ with weak derivative $du/dt \in L^2(T;\V)$ such that  
\[  \forall \varphi\in C^\infty_c(T;\V)\:\:\left(u,\frac{d\varphi}{dt}\right)_{t\in T} = -\left(\frac{du}{dt},\varphi\right)_{t\in T} \] 
where $C^\infty_c(T;\V)$ denotes the set of elements of $\V_{t\in T}$ that are infinitely differentiable with respect to time and compactly supported in $T$. By \cite[Proposition 1]{Budd},
$u\in H^1(T;\V)$ if and only if $u_i \in H^1(T;\mathbb{R})$ for each $i\in V$. Then $H^1(T;\V)$ has inner product:\[ (u,v)_{H^1(T;\V)} := (u,v)_{t\in T} + \left(\frac{du}{dt},\frac{dv}{dt}\right)_{t\in T} = \sum_{i\in V} d_i^r (u_i,v_i)_{H^1(T;\mathbb{R})}.\] We also define the local $H^1$ space on any interval $T$: \[ H^1_{loc}(T;\V) :=\left\{u\in \V_{t\in T}\,\middle|\,\forall a,b\in T, \: u\in H^1((a,b);\V) \right\}\] and likewise define $L^2_{loc}(T;\V)$ and $L^\infty_{loc}(T;\V)$. 

We introduce some notation: for $A\subseteq V$, define $\chi_A\in\V$ by
\[ (\chi_A)_i := \begin{cases} 1, & \text{if }i\in A,\\
 0, & \text{if }i\notin A.\end{cases}\]
Next, we introduce the graph gradient and Laplacian:
\begin{align*}
	&(\nabla u)_{ij}:=\begin{cases}u_j -u_i, & ij\in E\\ 0, &\text{otherwise} \end{cases} &(\Delta u)_i:=d_i^{-r}\sum_{j\in V}\omega_{ij}(u_i-u_j).&
\end{align*}
We note that $\Delta$ is positive semi-definite and self-adjoint with respect to $\V$. From $\Delta$ we define the \emph{graph diffusion operator}: \[e^{-t\Delta}u:=\sum_{n\geq 0} \frac{(-1)^n t^n}{n!}\Delta^n u\] where $v(t)=e^{-t\Delta}u $ is the unique solution to ${dv}/{dt} = -\Delta v$ with $v(0) = u$. Note that $e^{-t\Delta}\mathbf{1} = \mathbf{1}$, where $\mathbf{1}$ is the vector of ones, so graph diffusion is mass-conserving, i.e. $\ip{e^{-t\Delta}u,\mathbf{1}}=\ip{u,\mathbf{1}}$.
	By \cite[Proposition 2]{Budd} if $u\in H^1(T;\V)$ and $T$ is bounded below, then $e^{-t\Delta}u\in H^1(T;\V)$ with \[\frac{d}{dt}\left(e^{-t\Delta}u\right) = e^{-t\Delta}\frac{du}{dt} -  e^{-t\Delta}\Delta u.\] 
We recall from functional analysis the notation, for any linear $F:\V\rightarrow\V$,
\begin{align*} &\rho(F):=\max\{|\lambda| :\text{$\lambda$ an eigenvalue of $F$}\}\\
&||F|| := \sup_{||u||_\V = 1} ||Fu||_\V
\end{align*}
and recall the standard result that if $F$ is self-adjoint then $||F|| = \rho(F)$.

Finally, we recall the notation from \cite{Budd}: for problems of the form \[ \underset{x}{\argmin}\: f(x) \] we write $f \simeq g$ and say $f$ and $g$ are \emph{equivalent} when $g(x) = af(x) + b$ for $a>0$ and $b$ independent of $x$. As a result, replacing $f$ by $g$ does not affect the minimisers.

%which we can rewrite with the equivalent functional\footnote{Can check that $\left\langle 1-2e^{-\tau\Delta}u_n,u\right\rangle_\V=\ip{u,1-u}+\ip{u-e^{-\tau\Delta}u_n,u-e^{-\tau\Delta}u_n}-\ip{e^{-\tau\Delta}u_n,e^{-\tau\Delta}u_n}$. Then suppress the constant (in $u$) $\ip{e^{-\tau\Delta}u_n,e^{-\tau\Delta}u_n}$ term and divide by $2\tau$.}:
%\begin{equation}
%		\label{MBOMM}
%	u_{n+1}\in\underset{u\in\V_{[0,1]}}{\argmin}\: \frac{1}{2\tau}\ip{1-u,u} + \frac{\left|\left|u-e^{-\tau\Delta}u_n\right|\right|^2_\V}{2\tau}. 
%\end{equation}
 To define \emph{graph Allen\textendash Cahn} (\emph{AC}) \emph{flow}, we first define the \emph{graph Ginzburg\textendash Landau functional} as in {\cite{Budd}} by 
\begin{equation}
	\label{GL}
	\GL(u) := \frac{1}{2}\left|\left|\nabla u\right|\right|_\mathcal{E}^2 +\frac{1}{\varepsilon}\left\langle W\circ u,\mathbf{1} \right \rangle_\V
\end{equation}
where $W$ is a double-well potential and $\varepsilon>0$ is a scaling parameter. AC flow is then the $\ip{\cdot,\cdot}$ gradient flow of $\GL$, which for $W$ differentiable is given by the ODE
\begin{equation}
	\label{AC}
	\frac{du}{dt} = -\Delta u - \frac{1}{\varepsilon} W'\circ u = -\nabla_\V\GL(u)
\end{equation}
where $\nabla_\V$ is the Hilbert space gradient on $\V$. 

In \cite{Budd} AC flow was linked to the MBO scheme via a discretisation of it by the {``semi-discrete"} implicit Euler scheme (with time step $\tau\geq 0$):
\begin{equation}
	\label{sdAC}
	u_{n+1}=e^{-\tau\Delta} u_n- \frac{\tau}{\varepsilon} W'\circ u_{n+1}.
\end{equation}
This obeys the variational equation: \begin{equation}
	\label{ACMM}
	\begin{split}
	u_{n+1}\in \underset{u\in \V}{ \argmin }  \: &\frac{1}{\varepsilon}\left\langle W\circ u,\mathbf{1} \right \rangle_\V + \frac{\left|\left|u-e^{-\tau\Delta}u_n\right|\right|^2_\V}{2\tau}.
	%\\							   \simeq
							   %\footnotemark\:&\frac{1}{\varepsilon}\left\langle W\circ u,1\right \rangle_\V +\left\langle u,\frac{u_n-e^{-\tau\Delta}u_n}{\tau}\right \rangle_\V + \frac{\left|\left|u-u_n\right|\right|^2_\V}{2\tau}.
	\end{split}
\end{equation}
We now define the MBO scheme.
\begin{mydef}[Mass-conserving graph MBO scheme]\label{mMBOdef} We define the \emph{mass-conserving graph Merriman\textendash Bence\textendash Osher (MBO) scheme} by the sequence of variational problems\emph{:} \[u_{n+1}\in \underset{\underset{\ip{u,\mathbf{1}}=\ip{u_n,\mathbf{1}}}{u\in \V_{[0,1]}}}{ \argmin }\: \left\langle \mathbf{1}-2e^{-\tau\Delta}u_n,u\right\rangle_\V.\]
This is motivated by recalling the result from \emph{\cite{vGGOB}} that the ordinary graph MBO scheme, defined as an iterative diffusion \emph{(}for a time $\tau$\emph{)} and thresholding scheme, is equivalent to the sequence of variational problems\emph{:}
\begin{equation*}
	\label{MBO}
	u_{n+1}\in \underset{u\in\V_{[0,1]}}{\argmin}\: \left\langle \mathbf{1}-2e^{-\tau\Delta}u_n,u\right\rangle_\V
\end{equation*} 
	to which we have added a mass conservation constraint on the minimiser. Note that we can suppress the now constant $\ip{\mathbf{1},u}$ term. 
\end{mydef}

To link the AC flow to the MBO scheme, as in \cite{Budd} take as $W$ the \emph{double-obstacle potential}: \begin{equation}
	\label{Wobs}
	W(x) := \begin{cases}
    \frac{1}{2}x(1-x), & \text{for } 0 \leq x \leq 1, \\
    \infty, & \text{otherwise.}  \end{cases}
\end{equation}
See also Blowey and Elliott \cite{BE1991,BE1992,BE1993} for study of this potential in the continuum context and Bosch, Klamt and Stoll \cite{BKS2018} for recent work in the graph context.
%\hrule

As $W$ is not differentiable, the AC flow has to be redefined via the sub-differential of $W$. As in \cite{Budd} we say that a pair $(u,\beta)\in\V_{[0,1],t\in T}\times\V_{t\in T}$ is a solution to double-obstacle AC flow for any interval $T$ when $u\in H_{loc}^1(T;\V)$ and for a.e. $t\in T$ and all $i\in V$: 
\begin{align}
\label{ACobs2}
	&\varepsilon \frac{du_i(t)}{dt} + \varepsilon(\Delta u(t))_i  +\frac{1}{2}-u_i(t)= \beta_i(t), &\beta(t)\in\mathcal{B}(u(t))
\end{align}
where $\mathcal{B}(u)$ is the set (for $I_{[0,1]}(x):=0$ if $x\in[0,1]$ and $I_{[0,1]}(x):=\infty$ otherwise)
\begin{equation}
\label{oldbeta}
	 \mathcal{B}(u) :=\left\{  \alpha\in\V\: \middle|\: \forall i\in V,  \alpha_i\in -\partial I_{[0,1]}(u_i)
	\right\}.
\end{equation}
That is, $\mathcal{B}(u)=\emptyset$ if $u\notin\V_{[0,1]}$, and for $u\in\V_{[0,1]}$ it is the set of $\beta\in\V$ such that 
\[\beta_i\in\begin{cases}
		%\{\infty\}, & u_i<0\\
		[0,\infty), & u_i=0,\\
		\{0\}, &0<u_i < 1,\\
		(-\infty,0], & u_i=1.
		%\{-\infty\}, &u_i>1.
	\end{cases}\]
The semi-discrete scheme thus becomes, where $\lambda:=\tau/\varepsilon$,
\begin{equation}
\label{SDobs}
(1-\lambda)(u_{n+1})_i -(e^{-\tau\Delta}u_n)_i+\frac{\lambda}{2} =\lambda(\beta_{n+1})_i
\end{equation}
where $\beta_{n+1}\in\mathcal{B}(u_{n+1})$. Then the key result of \cite[Theorem 3]{Budd} is the derivation of the MBO scheme from AC flow via the semi-discrete scheme, i.e. that for $\varepsilon=\tau$  the solutions to \eqref{SDobs} obey the variational equation:
\[
 \begin{split}
	u_{n+1}\in \underset{u\in \V_{[0,1]}}{ \argmin }  \: &\left\langle u,\mathbf{1} -u\right \rangle_\V +  \left|\left|u-e^{-\tau\Delta}u_n\right|\right|^2_\V\\
							   %=\:&\ip{u,1}-\ip{u,u}+\ip{u,u}-2\left\langle u,e^{-\tau\Delta}u_n\right \rangle_\V + \left\langle e^{-\tau\Delta}u_n,e^{-\tau\Delta}u_n\right \rangle_\V\\
							\simeq\:&\left\langle u,\mathbf{1} -2e^{-\tau\Delta}u_n\right \rangle_\V
\end{split}
\]
and thus the solutions are MBO trajectories.

This paper will follow this method to derive the mass-conserving MBO scheme as a special case of a semi-discrete scheme for a mass-conserving double-obstacle AC flow.
\section{Mass-conserving AC flow}\label{mAC}
In \cite{RS}, Rubinstein and Sternberg define a mass-conserving Allen--Cahn flow (on a domain $\Omega$) as the non-local reaction-diffusion PDE, where $u:\Omega\rightarrow\mathbb{R}$, \be \frac{\partial u}{\partial t} = \Delta u - W'(u) + \frac{1}{|\Omega|}\int_\Omega W'(u) \; dx\ee
with Neumann boundary conditions. We can readily formulate this on a graph, noting the differing sign convention on $\Delta$ and introducing our scaling, as the ODE \be \label{mAC0} \frac{du}{dt} = -\Delta u - \frac{1}{\varepsilon}W'\circ u +\frac{1}{\varepsilon}\frac{\ip{W'\circ u, \mathbf{1} }}{\ip{\mathbf{1} ,\mathbf{1} }}\mathbf{1} .\ee
Finally, as above in \eqref{ACobs2} we account for the non-differentiability of $W$ to arrive at:
\begin{align}
\label{mACobs0}
	&\varepsilon \frac{du}{dt} + \varepsilon\Delta u(t)-u(t)+\frac{\ip{u(t),\mathbf{1} }}{\ip{\mathbf{1} ,\mathbf{1} }}\mathbf{1} = \beta(t) - \frac{\ip{\beta(t),\mathbf{1} }}{\ip{\mathbf{1} ,\mathbf{1} }}\mathbf{1} , &\beta(t)\in \mathcal{B}(u(t)).
\end{align}
We verify the mass conservation property for $u$ continuous and $H^1$. We first recall from \cite{Budd} a standard fact about continuous representatives of $H^1$ functions.
\begin{lem}[\text{See \cite[Lemma 4]{Budd}}]\label{H1AClem}
If $u\in H^1_{loc}(T;\V)\cap C^0(T;\V)$ or $u\in  H^1_{loc}(T;\mathbb{R})\cap C^0(T;\mathbb{R})$, then $u$ is locally absolutely continous on $T$. It follows that $u$ is differentiable a.e. in $T$, and the weak derivative equals the classical derivative a.e. in $T$.
\end{lem}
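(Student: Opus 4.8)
The plan is to reduce the claim to the classical one-dimensional Sobolev embedding $H^1((a,b);\mathbb{R})\hookrightarrow AC([a,b])$ for bounded intervals, and then use the given continuity to pin down the representative. \emph{Step 1 (reduction to the scalar case).} Since $V$ is finite, $u\in C^0(T;\V)$ holds iff $u_i\in C^0(T;\mathbb{R})$ for every $i\in V$, and by \cite[Proposition 1]{Budd} together with the definition of $H^1_{loc}$ on bounded subintervals, $u\in H^1_{loc}(T;\V)$ holds iff $u_i\in H^1_{loc}(T;\mathbb{R})$ for every $i$. Local absolute continuity, a.e.\ differentiability, and equality of weak and classical derivatives are all coordinatewise properties (again because $|V|<\infty$), so it suffices to prove the lemma for $u\in H^1_{loc}(T;\mathbb{R})\cap C^0(T;\mathbb{R})$.

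\emph{Step 2 (local absolute continuity).} Fix $a,b\in T$ with $a<b$; since $T$ is an interval, $[a,b]\subseteq T$. By the standard one-dimensional Sobolev theorem, the restriction of $u$ to $(a,b)$, which lies in $H^1((a,b);\mathbb{R})$, has an absolutely continuous representative $\tilde u$ on $[a,b]$ satisfying $\tilde u(y)=\tilde u(x)+\int_x^y (du/dt)(s)\,ds$ for all $x,y\in[a,b]$, where $du/dt$ is the weak derivative. The functions $u$ and $\tilde u$ are both continuous on $[a,b]$ and agree Lebesgue-a.e.\ there; since the complement of a null subset of an interval is dense, they agree everywhere on $[a,b]$. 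Hence $u=\tilde u$ is absolutely continuous on $[a,b]$, and as $a,b\in T$ were arbitrary, $u$ is locally absolutely continuous on $T$.

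\emph{Step 3 (differentiability and identification of the derivative).} An absolutely continuous function on a compact interval is differentiable a.e., with classical derivative equal a.e.\ to the integrand in its Lebesgue-integral representation. Applying this to $u=\tilde u$ on each $[a,b]\subseteq T$ shows $u$ is differentiable a.e.\ on $T$ and that its classical derivative equals the weak derivative $du/dt$ a.e.

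\emph{Main obstacle.} There is no real obstacle: the statement is standard (hence the citation to \cite{Budd}). The only points needing a little care are the coordinatewise reduction of Step~1 and, in Step~2, the elementary observation that two continuous functions agreeing Lebesgue-a.e.\ on an interval agree everywhere; the sole external ingredient is the one-dimensional Sobolev embedding and the fundamental theorem of calculus for absolutely continuous functions.
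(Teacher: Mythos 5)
Your proof is correct and is the standard argument: the paper itself gives no proof here, simply citing \cite[Lemma 4]{Budd}, and that cited lemma is established in exactly this way (coordinatewise reduction via the finiteness of $V$, the one-dimensional embedding $H^1((a,b);\mathbb{R})\hookrightarrow AC([a,b])$, and the observation that two continuous functions agreeing a.e.\ on an interval agree everywhere). Nothing is missing.
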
 
\begin{mydef}
	Define the \emph{mass} of $u\in\V$ to be \be \mathcal{M}(u):=\ip{u,\mathbf{1} }.\ee Furthermore, define the \emph{average value} of $u\in \V$ to be
\be \bar u := \frac{\mathcal{M}(u)}{\mathcal{M}(\mathbf{1})}.\ee
\end{mydef}
\begin{prop}\label{mACmc}
	For any interval $T$ and $u\in H^1_{loc}(T;\V)\cap C^0(T;\V)$, if $u$ obeys \eqref{mACobs0} at a.e. $t\in T$, then for a.e. $t\in T$ \[\frac{d}{dt}\mathcal{M}(u(t))=0\] and so $\mathcal{M}(u(t))$ is constant.
\end{prop}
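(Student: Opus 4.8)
The plan is to pair equation \eqref{mACobs0} with the all-ones vector $\mathbf{1}$ in the $\V$-inner product. The point is that the right-hand side and the last two terms of the left-hand side are built out of mean-subtractions, which are designed precisely so that pairing against $\mathbf{1}$ annihilates them, while the diffusion term $\Delta u$ also drops out because $\mathbf{1}$ lies in the kernel of the self-adjoint operator $\Delta$. What survives is exactly $\varepsilon\ip{du/dt,\mathbf{1}}$, which after identifying with $\tfrac{d}{dt}\mathcal{M}(u)$ gives the claim.

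Concretely, I would first fix $t\in T$ at which \eqref{mACobs0} holds with $\beta(t)\in\mathcal{B}(u(t))$, and apply $\ip{\cdot,\mathbf{1}}$ to both sides. On the left, $\ip{\Delta u(t),\mathbf{1}}=\ip{u(t),\Delta\mathbf{1}}=0$ since $\Delta$ is self-adjoint on $\V$ and $\Delta\mathbf{1}=\mathbf{0}$; and $\ip{\bar u(t)\mathbf{1},\mathbf{1}}=\bar u(t)\,\mathcal{M}(\mathbf{1})=\mathcal{M}(u(t))=\ip{u(t),\mathbf{1}}$, so the $-u(t)$ and $+\bar u(t)\mathbf{1}$ terms cancel. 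Hence the left-hand side collapses to $\varepsilon\ip{du(t)/dt,\mathbf{1}}$. On the right, $\ip{\beta(t)-\bar\beta(t)\mathbf{1},\mathbf{1}}=\mathcal{M}(\beta(t))-\bar\beta(t)\,\mathcal{M}(\mathbf{1})=\mathcal{M}(\beta(t))-\mathcal{M}(\beta(t))=0$. So $\ip{du(t)/dt,\mathbf{1}}=0$ for a.e. $t\in T$.

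It then remains to identify $\ip{du(t)/dt,\mathbf{1}}$ with $\tfrac{d}{dt}\mathcal{M}(u(t))$ and to conclude. Since $u\in H^1_{loc}(T;\V)$, each coordinate $u_i$ lies in $H^1_{loc}(T;\mathbb{R})$ and, having a continuous representative, is locally absolutely continuous by Lemma \ref{H1AClem}, with classical derivative equal a.e. to its weak derivative. As $\mathcal{M}(u(t))=\sum_{i\in V}d_i^r\,u_i(t)$ is a finite linear combination of such functions, it is itself locally absolutely continuous, with derivative $\sum_{i\in V}d_i^r\,\tfrac{du_i}{dt}(t)=\ip{du(t)/dt,\mathbf{1}}=0$ for a.e. $t$. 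A locally absolutely continuous function on the interval $T$ whose derivative vanishes a.e. is constant, which is the assertion.

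The computation itself is routine; the only point that requires a little care is this last step — making sure the regularity hypotheses genuinely license both the coordinatewise differentiation of $\mathcal{M}(u(t))$ and the passage from an a.e.-zero derivative to constancy — but Lemma \ref{H1AClem} supplies exactly what is needed, so I do not anticipate any real obstacle.
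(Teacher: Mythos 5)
Your proposal is correct and follows essentially the same route as the paper: take the $\ip{\cdot,\mathbf{1}}$ pairing of \eqref{mACobs0}, observe that all terms except $\varepsilon\ip{du/dt,\mathbf{1}}$ cancel (using self-adjointness of $\Delta$ and $\Delta\mathbf{1}=\mathbf{0}$), and conclude by local absolute continuity. The only cosmetic difference is that the paper justifies the identity $\frac{d}{dt}\mathcal{M}(u(t))=\ip{du/dt,\mathbf{1}}$ by testing against $\varphi\in C^\infty_c(T;\mathbb{R})$ in the weak-derivative sense, whereas you derive it coordinatewise from Lemma \ref{H1AClem}; both are valid.
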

\begin{proof}
	First, note that $\mathcal{M}(u(t))\in  H^1_{loc}(T;\mathbb{R})\cap C^0(T;\mathbb{R})$ with \[\frac{d}{dt}\mathcal{M}(u(t))=\left\langle\frac{du}{dt},\mathbf{1} \right\rangle_\V\]
since for any $\varphi\in C^\infty_c(T;\mathbb{R})$ 
\[\begin{split}\int_T \ip{u(t),\mathbf{1}} \frac{d\varphi}{dt} \; dt & = \int_T \left\langle u(t),\frac{d\varphi}{dt}\mathbf{1}\right\rangle_\V \; dt\\&=-\int_T  \left\langle \frac{du}{dt},\varphi(t)\mathbf{1}\right\rangle_\V \; dt=-\int_T  \left\langle \frac{du}{dt},\mathbf{1}\right\rangle_\V \varphi(t)\; dt.\end{split}\]
 Then for almost every $t$, taking the mass of both sides of \eqref{mACobs0}: \[\varepsilon \left\langle\frac{du}{dt},\mathbf{1} \right\rangle_\V + \varepsilon\ip{\Delta u(t),\mathbf{1} } -\ip{u(t),\mathbf{1} }+\frac{\ip{u(t),\mathbf{1} }}{\ip{\mathbf{1} ,\mathbf{1} }}\ip{\mathbf{1} ,\mathbf{1} }= \ip{\beta(t),\mathbf{1} } - \frac{\ip{\beta(t),\mathbf{1} }}{\ip{\mathbf{1} ,\mathbf{1} }}\ip{\mathbf{1} ,\mathbf{1} }.\]
	So most of the terms cancel and we are left with\[ \left\langle\frac{du}{dt},\mathbf{1} \right\rangle_\V =-\ip{\Delta u(t),\mathbf{1} }=0\] with the final equality because $\Delta$ is self-adjoint and $\Delta \mathbf{1}  = \mathbf{0}$. Then by absolute continuity we infer that $\mathcal{M}(u(t))$ is constant.
\end{proof}
As in \cite{Budd} with the ordinary Allen--Cahn flow, not all values in the subdifferential are attained in valid trajectories. 
We use Lemma \ref{H1AClem} to characterise the validly attained $\beta$.
\begin{thm}\label{betathm}
Let $(u,\beta)$ obey \eqref{mACobs0} at a.e. $t\in T$, with $u\in H^1_{loc}(T;\V)\cap C^0(T;\V)\cap\V_{[0,1],t\in T}$. Then, for a.e. $t\in T$ and all $i\in V$, we have 
\begin{equation}\label{beta2}
\beta_i(t) -\bar\beta(t)= \begin{cases} \bar u+\varepsilon(\Delta u(t))_i, &\text{if }u_i(t) = 0,\\
-\bar\beta(t), &\text{if }u_i(t) \in (0,1),\\
\bar u-1+\varepsilon(\Delta u(t))_i, &\text{if }u_i(t) = 1.
\end{cases}
\end{equation}
%Therefore for a.e. $t\in T$, $\beta(t) -\bar\beta(t)\mathbf{1} \in \V_{[\bar u-1,\bar u]} \subseteq \V_{[-1,1]}$. If $\bar u\in(0,1) $, we furthermore have that for a.e. $t\in T$, $\beta(t) \in \V_{[-1,1]}$. 
%We recall that this entails that $\GL(u(t))$ is monotonically decreasing in $t$. 
\end{thm}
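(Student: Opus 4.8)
The plan is to read \eqref{mACobs0} off componentwise and then kill the time-derivative term on the sets where $u_i$ is pinned at $0$ or $1$, mirroring the corresponding argument for ordinary double-obstacle AC flow in \cite{Budd}. First I would rewrite \eqref{mACobs0}: since $\bar u = \ip{u(t),\mathbf{1}}/\ip{\mathbf{1},\mathbf{1}}$ and $\bar\beta(t) = \ip{\beta(t),\mathbf{1}}/\ip{\mathbf{1},\mathbf{1}}$, the $i$-th component of \eqref{mACobs0} reads
\[ \beta_i(t) - \bar\beta(t) = \varepsilon\frac{du_i(t)}{dt} + \varepsilon(\Delta u(t))_i - u_i(t) + \bar u \]
for a.e.\ $t\in T$ and each $i\in V$. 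On $\{t : u_i(t)\in(0,1)\}$ the condition $\beta(t)\in\mathcal{B}(u(t))$ forces $\beta_i(t)=0$, so $\beta_i(t)-\bar\beta(t) = -\bar\beta(t)$, which is the middle case; for the remaining two cases it suffices to show $du_i/dt = 0$ for a.e.\ $t$ in $\{t : u_i(t)=0\}$ and in $\{t : u_i(t)=1\}$.

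This is where Lemma \ref{H1AClem} enters: it gives that each $u_i$ is locally absolutely continuous on $T$, hence differentiable a.e.\ with classical derivative equal a.e.\ to the weak derivative appearing in \eqref{mACobs0}. I would then use the standard fact --- provable in a line from the Lebesgue density theorem, or citable from a real-analysis text --- that an absolutely continuous function has vanishing derivative at almost every point of any of its level sets $\{t : u_i(t)=c\}$: almost every point $t_0$ of that set is a density-one point and hence a two-sided accumulation point of the set, so the difference quotient of $u_i$ at $t_0$ along a sequence staying in the level set is identically $0$, and since $u_i$ is differentiable at $t_0$ its derivative there is $0$. Applying this with $c=0$ and $c=1$ and substituting into the componentwise identity yields the first and third cases of \eqref{beta2}.

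Finally I would assemble the three cases: each holds off a $t$-null set depending on $i$, and since $V$ is finite the union of these exceptional sets is still null, giving \eqref{beta2} for a.e.\ $t\in T$ simultaneously for all $i\in V$; the appearance of $\bar u$ rather than $\bar u(t)$ is justified by Proposition \ref{mACmc}. The only slightly delicate point, and the one I would be most careful about, is the interplay of weak and classical derivatives --- the level-set fact concerns the classical derivative, so one must first restrict to the full-measure set on which Lemma \ref{H1AClem} guarantees the two agree before transferring the vanishing back to the weak derivative in \eqref{mACobs0}. Everything else is bookkeeping with the vertex-independent scalars $\bar u$ and $\bar\beta(t)$, which simply ride along.
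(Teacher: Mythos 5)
Your proposal is correct and follows essentially the same route as the paper: read off \eqref{mACobs0} componentwise, use $\beta(t)\in\mathcal{B}(u(t))$ for the middle case, and reduce the pinned cases to showing $du_i/dt=0$ a.e.\ on $\{u_i=0\}$ and $\{u_i=1\}$ via Lemma \ref{H1AClem}. The only (immaterial) difference is in justifying that vanishing: you invoke the general fact that an absolutely continuous function has zero derivative a.e.\ on any level set, whereas the paper uses the slightly more elementary observation that $0$ and $1$ are the extreme values of the range $[0,1]$, so every point of differentiability in those level sets is a constrained extremum and Fermat gives $du_i/dt=0$ there.
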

\begin{proof}
Since $\beta(t)\in\mathcal{B}(u(t))$ at a.e. $t\in T$, \eqref{beta2} holds at  a.e. $t\in T$ for which $u_i(t) \in(0,1)$. Let $\tilde T\subseteq T$ denote the times when $u$ is differentiable and has classical derivative equal to its weak derivative.  
Since $u_i(t)\in[0,1]$ at all times, when $t\in \tilde T$ and $u_i(t) \in\{ 0,1\}$ we have $du_i/dt = 0$. 
Consider first $u_i(t) = 0$. Then for a.e. such $t\in\tilde T$ \[0=\varepsilon \frac{du_i}{dt}(t) = -\varepsilon(\Delta u(t))_i + \beta_i(t)-\bar\beta(t) -\bar u \] so rearranging
\[\beta_i(t) -\bar\beta(t) =\bar u+\varepsilon(\Delta u(t))_i%\leq \bar u
\] 
% since $(\Delta u(t))_i\leq0$ as $i$ a minimiser of $u(t)$. 
Likewise for $u_i(t)=1$ we have for a.e. such $t\in \tilde T$
\[\beta_i(t) -\bar\beta(t) =\bar u-1+\varepsilon(\Delta u(t))_i%\geq \bar u -1
\]
so \eqref{beta2} holds at a.e. $t\in \tilde T$. By Lemma \ref{H1AClem}, $T\setminus\tilde T$ is  null, so  \eqref{beta2} holds at a.e. $t\in T$. 
%
%
%Note that by \eqref{beta2} and the sign properties of $\Delta u$, at a.e. $t\in T$ we have $\beta_i(t)-\bar\beta(t)\in[\bar u,\bar u-1]$ when $u_i(t)\in\{0,1\}$. Thus it suffices to show that $\bar\beta(t)\in[-\bar u,1-\bar u]$. Since $\beta(t)\in\mathcal{B}(u(t))$ at a.e $t\in T$, it follows that at such a $t$ if $\exists i\in V$ with $u_i(t)=0$ then $-\bar\beta(t)\leq \bar u-\beta_i(t)\leq \bar u$, if $\exists i\in V$ with $u_i(t)=1$ then $-\bar\beta(t)\geq \bar u-1-\beta_i(t)\geq \bar u-1$. 
%
%Now suppose at such a $t$ that $\bar\beta(t) > 1 -\bar u$: then by the above discussion $\forall i\in V$ $u_i(t)<1$, and so $\beta_i(t)\geq 0$ and $\bar u<1$. We have that \[ \beta_i(t) =     \begin{cases}
%0, &\text{if }u_i(t) \in (0,1),\\
%\bar u+\bar\beta(t)+\varepsilon(\Delta u(t))_i, &\text{if }u_i(t) = 0.
%\end{cases}\]
%By \eqref{mACobs0}, when $u_i(t)\in(0,1)$ we have
%\[\bar\beta(t) = u_i(t) -\bar u -\varepsilon(\Delta u(t))_i -\varepsilon\frac{du_i}{dt}\]
%and so for all such $i\in V$
%\[u_i(t) -\varepsilon(\Delta u(t))_i -\varepsilon\frac{du_i}{dt} > 1.\]
%
%
%
%Finally, consider the set $B(t) := \{ (\beta_i(t)\mid i\in V\}$. By above, $B(t) - \bar\beta(t)\subseteq[\bar u-1,\bar u]$, so we have that $\operatorname{diam} B(t) \leq 1$. Furthermore, $u(t)\neq\mathbf{0,1}$, so since $\beta(t)\in\mathcal{B}(u(t))$ we have $x,y\in B(t)$ such that $x\geq 0$ and $y\leq 0$. Therefore $B(t)\subseteq[-1,1]$.
\end{proof}
\begin{mydef}[Mass-conserving double-obstacle AC flow]\label{mACdef}
Let $T$ be any interval. A pair $(u,\beta)\in\V_{[0,1],t\in T}\times\V_{t\in T}$ is a solution to mass-conserving double-obstacle AC flow on $T$ when $u\in H^1_{loc}(T;\V)\cap C^0(T;\V)$ and for almost every $t\in T$
\begin{align}
\label{mACobs}
	&\varepsilon \frac{du}{dt} + \varepsilon\Delta u(t)-u(t)+\frac{\ip{u(t),\mathbf{1} }}{\ip{\mathbf{1} ,\mathbf{1} }}\mathbf{1} = \beta(t) - \frac{\ip{\beta(t),\mathbf{1} }}{\ip{\mathbf{1} ,\mathbf{1} }}\mathbf{1} , &\beta(t)\in\mathcal{B}(u(t)).
\end{align}
For brevity we will often refer to just $u$ as a solution to \eqref{mACobs}.
\end{mydef}
\subsection{Weak form and explicit integral form}
In this section, we prove first a weak form of mass-conserving AC flow, and then an explicit integral form.
\begin{thm}[\text{Cf. \cite[Proposition 10]{Budd}}]\label{ACobsweak}
	A function $u\in\V_{[0,1],t\in T}\cap H^1_{loc}(T;\V)$ \emph{(}and associated $\beta$% $\beta_i:= \varepsilon \frac{du_i}{dt} + \varepsilon(\Delta u)_i-u_i+\frac{1}{2}$
\emph{)} 
is a solution to \eqref{mACobs} if and only if for a.e. $t\in T$ and $\forall\eta\in\V_{[0,1]}$ such that $\mathcal{M}(\eta) = \mathcal{M}(u(t))$ \emph{(}i.e. $\eta - u(t) \bot \mathbf{1}$\emph{)}, the following hold
		\begin{subequations}
		\label{mACobsweak}
		\be		\label{mACobsweaka}
		\left\langle\varepsilon\frac{du}{dt}-u(t),\eta-u(t)\right\rangle_\V+\varepsilon\left\langle\nabla u(t),\nabla\eta-\nabla u(t)\right\rangle_\mathcal{E}\ge0,
		\ee
		\be 		\label{mACobsweakb}
		\left\langle\frac{du}{dt},\mathbf{1} \right\rangle_\V = 0.
		\ee
	\end{subequations}
\end{thm}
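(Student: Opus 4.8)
The plan is to prove the two implications after first putting \eqref{mACobsweaka} into a handier form. By symmetry of the edge weights one has the discrete integration-by-parts identity $\ip{\Delta u(t),v} = \langle\nabla u(t),\nabla v\rangle_\mathcal{E}$, so \eqref{mACobsweaka} is equivalent to $\ip{z(t),\eta-u(t)}\ge 0$ for all admissible $\eta$, where $z(t):=\varepsilon\tfrac{du}{dt}+\varepsilon\Delta u(t)-u(t)\in\V$, well-defined for a.e.\ $t$ once we pass to the locally absolutely continuous representative of $u$ supplied by Lemma \ref{H1AClem}. The admissibility condition $\mathcal{M}(\eta)=\mathcal{M}(u(t))$ is exactly $\eta-u(t)\perp\mathbf{1}$, so adding any multiple of $\mathbf{1}$ to $z(t)$ leaves the pairing unchanged; this is the algebraic reason the average-subtraction terms of \eqref{mACobs} are absent from the weak form.

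\emph{Forward direction.} Suppose $(u,\beta)$ solves \eqref{mACobs}. Then \eqref{mACobsweakb} is precisely Proposition \ref{mACmc} (whose proof identifies $\tfrac{d}{dt}\mathcal{M}(u(t))$ with $\ip{\tfrac{du}{dt},\mathbf{1}}$). For \eqref{mACobsweaka}, rearranging \eqref{mACobs} gives $z(t)=\beta(t)-\bigl(\bar\beta(t)+\bar u(t)\bigr)\mathbf{1}$, so $\ip{z(t),\eta-u(t)}=\ip{\beta(t),\eta-u(t)}$ by orthogonality, and $\ip{\beta(t),\eta-u(t)}=\sum_{i} d_i^r\,\beta_i(t)\,(\eta_i-u_i(t))\ge 0$ term by term, since $\beta(t)\in\mathcal{B}(u(t))$ means each $\beta_i(t)\in-\partial I_{[0,1]}(u_i(t))$ and $\eta\in\V_{[0,1]}$. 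Both the box constraint and the mass constraint on $\eta$ are used here.

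\emph{Backward direction.} Given \eqref{mACobsweakb}, and using $\Delta\mathbf{1}=\mathbf{0}$ together with self-adjointness, one first obtains $\bar z(t)=-\bar u(t)$ for a.e.\ $t$. The aim is then to find, for a.e.\ $t$, a constant $c(t)\in\mathbb{R}$ with $\beta(t):=z(t)+c(t)\mathbf{1}\in\mathcal{B}(u(t))$; granted this, $\bar\beta(t)=\bar z(t)+c(t)=c(t)-\bar u(t)$, hence $\beta(t)-\bar\beta(t)\mathbf{1}=z(t)+\bar u(t)\mathbf{1}$, which is exactly \eqref{mACobs}, and $(u,\beta)$ is the required solution. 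To produce $c(t)$, test the variational inequality against the mass-preserving, box-feasible perturbations $\eta=u(t)+\delta\,\bigl(d_j^r\chi_{\{i\}}-d_i^r\chi_{\{j\}}\bigr)$ for small $\delta>0$, with $i$ a vertex where $u(t)$ can be increased and $j$ one where it can be decreased. Splitting $V$ into $I_0=\{u_i(t)=0\}$, $I_*=\{u_i(t)\in(0,1)\}$, $I_1=\{u_i(t)=1\}$ and letting $i,j$ range appropriately, these inequalities force $z_i(t)$ to be constant on $I_*$ (set $c(t)$ equal to minus this value), $z_i(t)+c(t)\ge 0$ on $I_0$, and $z_i(t)+c(t)\le 0$ on $I_1$, i.e.\ $z(t)+c(t)\mathbf{1}\in\mathcal{B}(u(t))$. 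When $I_*=\emptyset$, the perturbations with $i\in I_0$, $j\in I_1$ give $z_i(t)\ge z_j(t)$, so $\sup_{i\in I_0}(-z_i(t))\le\inf_{j\in I_1}(-z_j(t))$ and any $c(t)$ in that interval works (interpreting the $\sup$ and $\inf$ as $-\infty$ and $+\infty$ when the relevant set is empty).

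The main obstacle is this last step. It is equivalent to showing that $-z(t)$ lies in the normal cone at $u(t)$ of the polytope $\V_{[0,1]}\cap\{\eta:\mathcal{M}(\eta)=\mathcal{M}(u(t))\}$, and that this normal cone decomposes as $N_{\V_{[0,1]}}(u(t))+\mathbb{R}\mathbf{1}$; the pairwise, mass-neutral test functions are the concrete realisation of this decomposition (alternatively one could invoke the normal-cone sum rule for polyhedra, the constraint qualification being automatic). This is precisely where the mass constraint genuinely departs from \cite[Proposition 10]{Budd}, in which single-coordinate perturbations already suffice. The remaining points — working with the locally absolutely continuous representative so that $z(t)$ and the pointwise identities make sense a.e., and the fact that no measurability of $t\mapsto\beta(t)$ is required since $\V_{t\in T}$ carries no regularity — are routine.
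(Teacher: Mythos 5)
Your proof is correct and follows essentially the same route as the paper's: the forward direction by substituting the ODE and checking $\ip{\beta(t),\eta-u(t)}\ge 0$ termwise, and the backward direction by testing against two-coordinate, mass-neutral perturbations and then fixing the free additive constant (your $c(t)$ is the paper's $\bar u+\theta(t)$) so that the resulting $\beta(t)$ lands in $\mathcal{B}(u(t))$, with the same case split according to whether $\{i:u_i(t)\in(0,1)\}$ is empty. The normal-cone reformulation is a pleasant gloss but the underlying argument is identical.
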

\begin{proof}
Let $u$ satisfy \eqref{mACobs}. Then for a.e. $t\in T$ we have \eqref{mACobsweakb} and $\beta(t)\in\mathcal{B}(u(t))$, so in particular $\beta_i(t) \geq 0$ and $\beta_i(t)\leq 0$ when $u_i(t)$ is 0 and 1 respectively. Therefore, for all $\eta\in\V_{[0,1]}$ with $\eta - u(t) \bot \mathbf{1}$, for a.e. $t\in T$ we verify \eqref{mACobsweaka}:
\[\begin{split}
	&LHS\\ &= \left\langle-\varepsilon\Delta u(t) -\frac{\ip{u(t),\mathbf{1} }}{\ip{\mathbf{1} ,\mathbf{1} }}\mathbf{1} +\beta(t) - \frac{\ip{\beta(t),\mathbf{1} }}{\ip{\mathbf{1} ,\mathbf{1} }}\mathbf{1} ,\eta-u(t)\right\rangle_\V+\varepsilon\left\langle\nabla u(t),\nabla\eta-\nabla u(t)\right\rangle_\mathcal{E}\\
	&=\ip{\beta(t),\eta-u(t)} \\
	&= \sum_{\{i|u_i(t)=0\}} d_i^r\beta_i(t)\eta_i + \sum_{\{i|u_i(t)=1\}} d_i^r\beta_i(t)(\eta_i-1)\geq 0.
\end{split}
\]	
	Now let $u\in\V_{[0,1],t\in T}\cap H^1_{loc}(T;\V)$ satisfy \eqref{mACobsweak}.	 Therefore by \eqref{mACobsweaka}, for a.e. $t\in T$ and all $\eta\in\V_{[0,1]}$ with $\eta - u(t) \bot \mathbf{1}$
\[  \left\langle\varepsilon\frac{du}{dt}-u(t)+\varepsilon\Delta u(t),\eta-u(t)\right\rangle_\V \ge 0 \]
and so for any $\theta:T\rightarrow\mathbb{R}$ and any $\eta$ as before,
\be\label{weaka2}  \left\langle\varepsilon\frac{du}{dt}-u(t)+\varepsilon\Delta u(t)+\frac{\ip{u(t),\mathbf{1} }}{\ip{\mathbf{1} ,\mathbf{1} }}\mathbf{1}+\theta(t)\mathbf{1} ,\eta-u(t)\right\rangle_\V \ge 0. \ee
For a specific $\theta$ to be determined later, define \be\label{b1}\beta(t) :=\varepsilon\frac{du}{dt}-u(t)+\varepsilon\Delta u(t)+\frac{\ip{u(t),\mathbf{1} }}{\ip{\mathbf{1} ,\mathbf{1} }}\mathbf{1}+\theta(t)\mathbf{1}.\ee
We will check that $\beta(t)\in\mathcal{B}(u(t))$. We consider certain valid test functions $\eta$ for \eqref{weaka2}. In particular, choose $i,j\in V$ and set $\eta_k = u_k(t) \in [0,1]$ for all $k\neq i,j$. Next, define the translated test function $\xi:=\eta-u(t)$, so $\xi_k=0$ for $k\neq i,j$. Then $\eta=u(t)+\xi$ is valid  if and only if  $\xi_i \in [-u_i(t),1-u_i(t)]$,  $\xi_j \in [-u_j(t),1-u_j(t)]$, and $\mathcal{M}(\xi) = 0 $, i.e.
\[ d_i^r\xi_i + d_j^r\xi_j=0. \]
\begin{nb} If $u_i(t) = 0$ and $u_j(t)>0$ then for $0<\alpha\leq 1$ sufficiently small
\begin{align*} &\xi_j = -\alpha u_j(t)\in[-u_j(t),0) &\xi_i = \alpha  d_i^{-r}d_j^ru_j \in (0,1-u_i(t)]\end{align*}
 is a valid $\xi$ with $\xi_i >0$. Likewise, if $u_i(t) = 1$ and $u_j(t)<1$ there is a valid $\xi$ with $\xi_i <0$, and if $u_i(t),u_j(t)\in(0,1)$ there are valid $\xi$ with $\xi_i >0$ and valid $\xi$ with $\xi_i < 0$. 
\end{nb}
For any valid $\xi$, by \eqref{weaka2} and \eqref{b1} we have that 
\[ d_i^r\xi_i\beta_i(t) + d_j^r\xi_j\beta_j(t)\geq 0 \]
and so since $d_i^r\xi_i + d_j^r\xi_j= 0$, 
\be \label{xiineq} d_i^r\xi_i(\beta_i(t)-\beta_j(t))\geq 0.\ee 

Next, first suppose $u_j(t) \in (0,1)$ for some $j\in V$. Then we fix such a $j$ and choose $\theta(t)$ so that $\beta_j(t) = 0$, and thus by \eqref{xiineq} for any $i\in V$ and valid $\xi$: 
\[\xi_i\beta_i(t)\geq 0.\]
Then by the above note, if we choose a valid $\xi$ with $\xi_i$ of the appropriate sign, \[\beta_i(t)\begin{cases}
		=0, & \text{if }u_i(t)\in (0,1),\\
		\leq 0, &\text{if }u_i(t)=1,\\
		\geq 0, &\text{if }u_i(t)=0.
	\end{cases}\]

Next, suppose no such $j$ exists. By above if $u_i(t) = 0$ and $u_j(t) = 1$ then we can choose $\xi_i > 0$ and so by \eqref{xiineq} we have that $\beta_j(t)\leq \beta_i(t)$. Thus we can choose $\theta(t)$ to add an appropriate constant to the values of $\beta(t)$ so that \[0\in\left[\max_{u_j(t)=1}\beta_j(t),\min_{u_i(t)=0}\beta_i(t)\right].\]
Hence we have 
\[\beta_i(t)\begin{cases}
		\leq 0, &\text{if }u_i(t)=1,\\
		\geq 0, &\text{if }u_i(t)=0,
	\end{cases}\]
so $\beta(t)\in\mathcal{B}(u(t))$.

Note finally that whatever the choice of $\theta$, by \eqref{mACobsweakb} and \eqref{b1} we have
\[\ip{\beta(t),\mathbf{1}}=\theta(t)\ip{\mathbf{1},\mathbf{1}}.\] 
Hence by \eqref{b1}
\[ \varepsilon \frac{du}{dt} + \varepsilon\Delta u(t)-u(t)+\frac{\ip{u(t),\mathbf{1} }}{\ip{\mathbf{1} ,\mathbf{1} }}\mathbf{1} = \beta(t) - \frac{\ip{\beta(t),\mathbf{1} }}{\ip{\mathbf{1} ,\mathbf{1} }}\mathbf{1}\] and we chose $\theta(t)$ so that our choice of $\beta(t)\in\mathcal{B}(u(t))$. Hence $(u,\beta)$ solves \eqref{mACobs}.
\end{proof}
\begin{thm}\label{explicitthm}
For $u\in \V_{[0,1],t\in T}$ and $\beta \in \V_{t\in T}$, $(u,\beta)$ is a solution to \eqref{mACobs} if and only if $\beta -\bar\beta \mathbf{1}$ is locally essentially bounded and locally integrable \emph{(}where by ``locally" we mean on each bounded subinterval of $T$\emph{)}, $\beta(t)\in\mathcal{B}(u(t))$ for a.e. $t\in T$, and for all $t\in T$
 \be\label{mACobsexplicit}
u(t) = \bar u \mathbf{1} + e^{t/\varepsilon}e^{-t\Delta}\left(u(0) -\bar u\mathbf{1}\right) + \frac{1}{\varepsilon}e^{t/\varepsilon}e^{-t\Delta}\int_0^t e^{-s/\varepsilon}e^{s\Delta}\left(\beta(s) - \bar \beta(s)\mathbf{1} \right) \; ds.
\ee
\end{thm}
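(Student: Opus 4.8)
The plan is to reduce \eqref{mACobs} to a decoupled, classical linear ODE by projecting onto the constant vector $\mathbf{1}$ and its orthogonal complement, and then apply variation of parameters. First I would set $v(t) := u(t) - \bar{u}\mathbf{1}$, noting that by Proposition \ref{mACmc} the mass $\mathcal{M}(u(t))$, hence $\bar u$, is constant in $t$ (this uses that $u \in H^1_{loc}(T;\V)\cap C^0(T;\V)$, which is part of the hypothesis that $(u,\beta)$ solve \eqref{mACobs}). Observe that $v(t) \perp \mathbf{1}$ for all $t$, that $dv/dt = du/dt$, and that $\Delta v = \Delta u$ since $\Delta\mathbf{1} = \mathbf{0}$. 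Likewise set $\tilde\beta(t) := \beta(t) - \bar\beta(t)\mathbf{1}$, which also satisfies $\tilde\beta(t)\perp\mathbf{1}$. Then \eqref{mACobs} becomes, after moving the $u$-terms together and using $\langle u(t),\mathbf{1}\rangle_\V/\langle\mathbf{1},\mathbf{1}\rangle_\V \,\mathbf{1} = \bar u\mathbf{1}$,
\[
\varepsilon\frac{dv}{dt} = -\varepsilon\Delta v(t) + v(t) + \tilde\beta(t),
\]
i.e. $dv/dt = -\Delta v + \tfrac{1}{\varepsilon} v + \tfrac{1}{\varepsilon}\tilde\beta$, a linear inhomogeneous ODE on $\V$ with the (commuting, since both are functions of $\Delta$ alone acting as $\tfrac{1}{\varepsilon}I$) operator $-\Delta + \tfrac{1}{\varepsilon}I$ generating the semigroup $e^{t/\varepsilon}e^{-t\Delta}$.

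Next I would establish the regularity claims. The equivalence should be stated as: a solution to \eqref{mACobs} automatically has $\tilde\beta = \beta - \bar\beta\mathbf{1}$ locally essentially bounded and locally integrable (because $\tilde\beta(t) = \varepsilon\,du/dt + \varepsilon\Delta u - u + \bar u\mathbf{1}$ and each term on the right is in $L^2_{loc}(T;\V)$ — $du/dt$ by $u\in H^1_{loc}$, the rest by $u\in\V_{[0,1],t\in T}$ and continuity — and on a finite-dimensional space local $L^2$ plus the boundedness of $u$ forces the stated integrability; essential boundedness follows since $\tilde\beta(t)$ is determined pointwise a.e. by $u$ via Theorem \ref{betathm}, whose right-hand side is a bounded function of $t$ on bounded subintervals because $\Delta u$ is bounded when $u\in\V_{[0,1]}$); conversely, given those regularity hypotheses on $\tilde\beta$, the right-hand side of \eqref{mACobsexplicit} is well-defined, absolutely continuous in $t$, and differentiable a.e.

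Then the core computation: given a solution to \eqref{mACobs}, apply $e^{-t/\varepsilon}e^{t\Delta}$ to the ODE for $v$ and recognise the left side as $\frac{d}{dt}\!\left(e^{-t/\varepsilon}e^{t\Delta}v(t)\right)$, using \cite[Proposition 2]{Budd} (the product rule for $e^{-t\Delta}$ applied to $H^1$ functions, combined with the scalar factor $e^{-t/\varepsilon}$) — this gives $\frac{d}{dt}\!\left(e^{-t/\varepsilon}e^{t\Delta}v(t)\right) = \tfrac{1}{\varepsilon}e^{-t/\varepsilon}e^{t\Delta}\tilde\beta(t)$. Integrating from $0$ to $t$ (valid by local absolute continuity, Lemma \ref{H1AClem}) and applying $e^{t/\varepsilon}e^{-t\Delta}$ yields \eqref{mACobsexplicit} after substituting back $v = u - \bar u\mathbf{1}$. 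For the converse, differentiate \eqref{mACobsexplicit} directly (the integral term is differentiable a.e. with the expected integrand since $\tilde\beta$ is locally integrable and locally bounded) to recover the ODE for $v$, hence \eqref{mACobs}; one also checks $u(t)\in\V_{[0,1]}$ is consistent with the separately-imposed hypothesis $u\in\V_{[0,1],t\in T}$ and $\beta(t)\in\mathcal{B}(u(t))$.

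The main obstacle I anticipate is not the ODE integration, which is routine once the splitting is in place, but rather bookkeeping the precise regularity statements so the "if and only if" is clean: one must show that the defining equation \eqref{mACobs} forces exactly the integrability/boundedness of $\tilde\beta$ quoted in the theorem (neither more nor less), and conversely that assuming it — together with $u\in\V_{[0,1],t\in T}$, $\beta(t)\in\mathcal{B}(u(t))$, and the integral identity — suffices to put $u$ back in $H^1_{loc}(T;\V)\cap C^0(T;\V)$ and verify \eqref{mACobs} a.e. Getting the direction of implication right for each regularity clause, and invoking Theorem \ref{betathm} to pin down $\tilde\beta$ pointwise where $u\in\{0,1\}$, is the delicate part; the algebra of the projection and the variation-of-parameters formula is straightforward.
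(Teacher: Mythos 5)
Your core computation is exactly the paper's: apply the integrating factor $e^{-t/\varepsilon}e^{t\Delta}$ to $u-\bar u\mathbf{1}$, recognise the left side as a total derivative via the product rule for $e^{t\Delta}$ on $H^1$ functions, and integrate with the $H^1$ fundamental theorem of calculus; the converse by differentiating \eqref{mACobsexplicit} and checking $u\in C^0\cap H^1_{loc}$ is also the same. The one place you genuinely diverge is the local essential boundedness of $\beta-\bar\beta\mathbf{1}$ in the forward direction. The paper does \emph{not} prove this inside Theorem \ref{explicitthm}: it only shows local integrability there (as the sum of a continuous function and an $L^2_{loc}$ derivative) and defers essential boundedness to Lemma \ref{gammalem}, which obtains it from the semi-discrete approximation together with uniqueness — hence the paper's explicit remark about the forward reference not being circular. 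Your proposal to read the bound off Theorem \ref{betathm} instead is more direct and does work, but not quite for the reason you state: at indices with $u_i(t)\in(0,1)$ the formula gives $\beta_i(t)-\bar\beta(t)=-\bar\beta(t)$, which is not controlled by $\Delta u$ alone. You need the extra step that $\ip{\beta(t)-\bar\beta(t)\mathbf{1},\mathbf{1}}=0$, so summing the (bounded, by $1+\varepsilon\max_i d_i^{1-r}$) components over $\{i: u_i(t)\in\{0,1\}\}$ and dividing by $\sum_{i:u_i(t)\in(0,1)}d_i^r\geq\min_i d_i^r>0$ bounds $|\bar\beta(t)|$ uniformly whenever that set is nonempty; when it is empty all components are already bounded directly. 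With that one-line repair your route gives a \emph{global} essential bound and removes the forward reference entirely, which is arguably cleaner than the paper's arrangement.
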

\begin{proof} Let $(u,\beta)$ solve \eqref{mACobs}. Then $\beta - \bar\beta\mathbf{1}$ is a sum of a continuous function and the derivative of a $H^1_{loc}$ function and hence is locally integrable. We shall prove that $\beta - \bar\beta\mathbf{1}$ is globally essentially bounded in Lemma \ref{gammalem}. Finally, following \cite{Budd}, we rewrite \eqref{mACobs} to obtain \eqref{mACobsexplicit}. Consider the expression:\be \label{exp1} \varepsilon \frac{d}{dt} \left( e^{-t/\varepsilon}e^{t\Delta}(u-\bar u\mathbf{1})\right).\ee Applying the product rule we obtain that for a.e. $t \in T$, \[\begin{split}\eqref{exp1}&=-e^{-t/\varepsilon}e^{t\Delta}(u-\bar u\mathbf{1})+\varepsilon  e^{-t/\varepsilon}\frac{d}{dt} \left(e^{t\Delta}(u-\bar u\mathbf{1})\right)\\
&=-e^{-t/\varepsilon}e^{t\Delta}(u-\bar u\mathbf{1})+\varepsilon  e^{-t/\varepsilon}e^{t\Delta}\Delta(u-\bar u\mathbf{1})+\varepsilon  e^{-t/\varepsilon}e^{t\Delta}\frac{du}{dt}\\
&=e^{-t/\varepsilon}e^{t\Delta}\left(\varepsilon\frac{du}{dt} +\varepsilon\Delta u -u + \bar u \mathbf{1}\right)=e^{-t/\varepsilon}e^{t\Delta}\left(\beta(t) - \frac{\ip{\beta(t),\mathbf{1} }}{\ip{\mathbf{1} ,\mathbf{1} }}\mathbf{1} \right)\end{split}\]
and therefore integrating both sides and applying the `fundamental theorem of calculus' on $H^1$ \cite[Theorem 8.2]{Brezis} we obtain the integral form.

Now let $\xi:=\beta -\bar\beta \mathbf{1}$ be locally essentially bounded and locally integrable, let $\beta(t)\in\mathcal{B}(u(t))$ for a.e. $t\in T$, and for all $t\in T$ let \eqref{mACobsexplicit} hold. By differentiating and reversing the above steps we get that $(u,\beta)$ obeys the ODE in \eqref{mACobs}, and in particular the weak derivative of $u$ is given by: 
\[
\frac{du}{dt} = \left(\frac{1}{\varepsilon}I -\Delta\right)e^{t/\varepsilon}e^{-t\Delta}\left(u(0)-\bar u \mathbf{1}\right) + \frac{1}{\varepsilon}\xi(t) + \left(\frac{1}{\varepsilon}I -\Delta\right) \frac{1}{\varepsilon}\int_0^t e^{(t-s)/\varepsilon}e^{-(t-s)\Delta}\xi(s) \; ds.
\]
As $\xi$ is locally essentially bounded, by \eqref{mACobsexplicit} $u$ is continuous, and since $u$ is bounded it is locally $L^2$. Finally, by above $du/dt$ is a sum of (respectively) a smooth function, a locally essentially bounded function and the integral of a locally essentially bounded function, so is locally essentially bounded and hence locally $L^2$. Hence $u\in H^1_{loc}(T;\V)$. 
\end{proof}
%\be\label{mACobsexplicit}
%u(t) = \bar u \mathbf{1} + e^{t/\varepsilon}e^{-t\Delta}\left(u(0) -\bar u\mathbf{1}\right) + \frac{1}{\varepsilon}e^{t/\varepsilon}e^{-t\Delta}\int_0^t e^{-s/\varepsilon}e^{s\Delta}\left(\beta(s) - \bar \beta(s)\mathbf{1} \right) \; ds.
%\ee
\begin{nb}
The forward reference to Lemma \ref{gammalem} does not introduce circularity here, because we do not use this aspect of the forward direction of this theorem until after proving that lemma. We will however use the converse direction in proving the convergence of the semi-discrete scheme \emph{(}Theorem \ref{SDlimit}\emph{)}.

Note also that by \eqref{mACobsexplicit}, if $\beta(t) = \mathbf{0}$ for a.e. $t\in [0,\infty)$, then 
\[ u(t) = \bar u \mathbf{1} + \sum_{k=1}^{|V|-1} e^{(1/\varepsilon - \mu_k)t}\ip{u(0),\xi_k}\xi_k  \] 
where $\{(\mu_k,\xi_k)\}_{k=0}^{|V|-1}$ are the orthonormal eigenpairs of $\Delta$ in increasing order of eigenvalue \emph{(}so $\mu_0 = 0$ and $\xi_0\propto \mathbf{1}$\emph{)}. Let $\ell$ be the least $k\geq 1$ such that $\ip{u(0),\xi_k}\neq 0$. Then to leading order 
\[
u(t) \approx \bar u \mathbf{1} + e^{(1/\varepsilon - \mu_\ell)t}\ip{u(0),\xi_\ell}\xi_\ell
\]
which if $\mu_\ell < 1/\varepsilon$ contradicts $u(t)\in\V_{[0,1]}$ for sufficiently large $t$. Hence in such a case we must have $\beta(t)\neq \mathbf{0}$ for a non-null subset of the time. In particular, if $\varepsilon < 1/||\Delta||$ then this holds unless $u(0)=\bar u\mathbf{1}$.
\end{nb}
\subsection{Existence and uniqueness}
Finally, we have the following existence and uniqueness theory for \eqref{mACobs}. 
\begin{thm}\label{uniqueness}
Let $(u,\beta),(v,\gamma)$ solve \eqref{mACobs} on $T:=[0,T_0]$ or $[0,\infty)$, with $u(0)=v(0)$. Then for all $t\in T$, $u(t) = v(t)$, and there exists $\tilde T$ such that $T\setminus \tilde T$ has zero measure and for all $t\in \tilde T$, $\beta(t) - \gamma(t)= (\bar\beta(t) -\bar\gamma(t))\mathbf{1}$. Furthermore, if $u_i(t)\in (0,1)$ for some $i\in V$ and $t\in \tilde T$, then $\beta(t)=\gamma(t)$. 
\end{thm}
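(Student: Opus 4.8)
The plan is to combine the weak formulation of Theorem~\ref{ACobsweak} with a Gr\"onwall argument. First I would use Proposition~\ref{mACmc}: since $(u,\beta)$ and $(v,\gamma)$ both solve \eqref{mACobs} with $u(0)=v(0)$, we have $\mathcal{M}(u(t)) = \mathcal{M}(u(0)) = \mathcal{M}(v(0)) = \mathcal{M}(v(t))$ for all $t\in T$. Hence $v(t)$ is an admissible competitor in the weak inequality \eqref{mACobsweaka} for $u$, and $u(t)$ is admissible in the corresponding inequality for $v$. For a.e. $t\in T$ --- the intersection of the two full-measure sets on which \eqref{mACobsweak} holds --- I would plug $\eta = v(t)$ into the inequality for $u$ and $\eta = u(t)$ into the inequality for $v$ and add the two. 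Writing $w := u - v$, the mass/linear terms combine into $+||w(t)||_\V^2$ and the gradient terms into $-\varepsilon||\nabla w(t)||_\mathcal{E}^2$, giving
\[
\varepsilon\left\langle\frac{dw}{dt},w(t)\right\rangle_\V \;\le\; ||w(t)||_\V^2 - \varepsilon||\nabla w(t)||_\mathcal{E}^2 \;\le\; ||w(t)||_\V^2 .
\]

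Next, $w\in H^1_{loc}(T;\V)\cap C^0(T;\V)$, so by Lemma~\ref{H1AClem} each component $w_i$ is locally absolutely continuous, hence so is $g(t) := ||w(t)||_\V^2$, with $g'(t) = 2\langle \tfrac{dw}{dt}, w(t)\rangle_\V$ a.e.\ in $T$. Thus $g\ge 0$ is locally absolutely continuous with $g(0)=0$ and $g'(t)\le \tfrac{2}{\varepsilon}g(t)$ a.e.; Gr\"onwall's inequality then forces $g\equiv 0$, i.e.\ $u(t)=v(t)$ for all $t\in T$ (on $[0,\infty)$ one simply applies this on each $[0,T_0]$).

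Given $u = v$, I would then let $\tilde T$ be the full-measure set of $t\in T$ at which both instances of \eqref{mACobs} hold and at which $u$ and $v$ are differentiable with classical derivative equal to the weak derivative (so in particular $du/dt = dv/dt$ there). Subtracting the two equations of \eqref{mACobs} at such $t$, the entire left-hand side cancels, leaving $\beta(t) - \bar\beta(t)\mathbf{1} = \gamma(t) - \bar\gamma(t)\mathbf{1}$, which is the claimed identity. Finally, if $u_i(t)\in(0,1)$ for some $i\in V$ and $t\in\tilde T$, then $\beta(t),\gamma(t)\in\mathcal{B}(u(t))$ force $\beta_i(t)=\gamma_i(t)=0$; reading off the $i$-th component of $\beta(t)-\gamma(t)=(\bar\beta(t)-\bar\gamma(t))\mathbf{1}$ gives $\bar\beta(t)=\bar\gamma(t)$, whence $\beta(t)=\gamma(t)$.

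The main point requiring care --- rather than a deep obstacle --- is the admissibility of the test functions: unlike the unconstrained case of \cite{Budd}, one cannot test against arbitrary elements of $\V_{[0,1]}$ but only against those of the correct mass, so the comparison of $u$ with $v$ genuinely rests on the mass-conservation identity of Proposition~\ref{mACmc}. Everything else is a standard monotonicity/Gr\"onwall estimate, with the nonnegative gradient term $\varepsilon||\nabla w(t)||_\mathcal{E}^2$ simply discarded.
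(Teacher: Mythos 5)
Your proof is correct, and it reaches the same Gr\"onwall estimate as the paper but by a genuinely different route in two places. For the uniqueness of $u$, the paper subtracts the strong form of \eqref{mACobs} directly and then bounds the subdifferential contribution by a pointwise monotonicity case-check (if $v_i>u_i$ then $\gamma_i\le\beta_i$, etc., so $\ip{\gamma-\beta,w}\le 0$), whereas you absorb that term entirely into the variational inequality of Theorem~\ref{ACobsweak} by cross-testing the two solutions against each other; both hinge, as you correctly emphasise, on Proposition~\ref{mACmc} to make the competitors admissible (respectively, to cancel the $\bar u\mathbf{1}$ and $\bar v\mathbf{1}$ terms), and both discard a nonnegative term ($-\ip{\gamma-\beta,w}$ and $\varepsilon\ip{\Delta w,w}$ in the paper, $\varepsilon\|\nabla w\|^2_{\mathcal E}$ in yours) before applying Gr\"onwall. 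For the identification of $\beta-\gamma$, the paper routes through the characterisation of Theorem~\ref{betathm} and then takes averages over $V$, while you simply subtract the two strong equations once $u\equiv v$ is known (so the weak derivatives agree a.e.) and read off $\beta-\bar\beta\mathbf{1}=\gamma-\bar\gamma\mathbf{1}$ directly; your final step, extracting $\bar\beta=\bar\gamma$ from the single component where $u_i(t)\in(0,1)$ rather than from an averaging identity, is slightly more economical. The only (mild) cost of your approach is its reliance on Theorem~\ref{ACobsweak}, which is proved earlier in the paper, so no circularity arises; the benefit is that the subdifferential sign analysis is done once, in the weak-form theorem, rather than repeated here.
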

\begin{proof}
As $u$ and $v$ solve \eqref{mACobs}, by subtracting and since $\bar u = \bar v$  we get for a.e. $t\in T$  
\[\varepsilon\frac{d}{dt}(v(t)-u(t)) +\varepsilon\Delta(v(t)-u(t)) -(v(t)-u(t)) = (\gamma(t)-\beta(t)) +(\bar\beta(t)-\bar\gamma(t))\mathbf{1}.\]
Let $w:=v-u$ and take the inner product with $w$, noting that $\ip{w,\mathbf{1}} =0$,
\[\varepsilon\left\langle\frac{dw}{dt},w(t)\right\rangle_\V + \varepsilon\ip{\Delta w(t),w(t)} - \ip{w(t),w(t)} =\ip{\gamma(t)-\beta(t),w(t)}. \]
Consider $(v_i(t)-u_i(t))(\gamma_i(t)-\beta_i(t))$. If $v_i(t)=u_i(t)$ this equals 0, if $v_i(t)> u_i(t)$ then a simple case check gives that therefore $\gamma_i(t) \leq \beta_i(t)$ and likewise if $v_i(t)<u_i(t)$ then $\gamma_i(t) \geq \beta_i(t)$. Hence $\ip{\gamma(t)-\beta(t),w(t)}\leq 0$. Furthermore since $\Delta$ is positive semi-definite we have $\ip{\Delta w(t),w(t)}\geq 0 $. Therefore by the above we have for a.e. $t\in T$,
\[\frac{1}{2}\varepsilon\frac{d}{dt}||w(t)||_\V^2 \leq ||w(t)||^2_\V \] and note that $w(0) = \mathbf{0}$. Hence by Gr\"{o}nwall's differential inequality we have that for all $t\in T$, $||w(t)||_\V^2 \leq 0$. Therefore, for all $t\in T$, $v(t) =u(t)$.

Finally by Theorem \ref{betathm}, since $u = v$ on $T$, at a.e. $t\in T$ (in particular, at $t\in\tilde T$ for some $\tilde T\subseteq T$ with $T\setminus\tilde T$ of zero measure): \[\beta_i(t)-\gamma_i(t)=\begin{cases} \bar\beta(t) -\bar\gamma(t), &\text{if }u_i(t) = 0,\\
0, &\text{if } u_i(t)\in(0,1),\\
\bar\beta(t) -\bar\gamma(t), &\text{if }u_i(t) = 1.
\end{cases}\] Therefore at $t\in\tilde T$, either $\beta(t)-\gamma(t) = (\bar\beta(t) -\bar\gamma(t))\mathbf{1}$ or, if $u_i(t)\in(0,1)$ for some $i\in V$, then taking the average value of both sides we get \[\bar\beta(t) -\bar\gamma(t) = (\bar\beta(t) -\bar\gamma(t))\frac{\mathcal{M}(\chi_{\{i\mid u_i(t)\in\{0,1\}\}})}{\mathcal{M}(\mathbf{1})}\] so $\bar\beta(t) -\bar\gamma(t) = 0$ and hence $\beta(t)=\gamma(t)$.
\end{proof} 
\begin{nb}
There are only $2^{|V|}$ distinct $u$ such that $u_i\in\{0,1\}$ for all $i\in V$. Hence if $\overline {u(0)}\in [0,1]\setminus \{ \bar u\mid u\in\V\text{ and } \forall i\in V, u_i\in\{0,1\}\}$, which is $[0,1]$ minus a finite set of points, then we must have $\beta(t)=\gamma(t)$ for a.e. $t\in T$ \emph{(}since $\overline{u(t)} = \overline{u(0)}$\emph{)}.
\end{nb}
\begin{thm}\label{existence}
Let $T=[0,\infty)$. Then for all  $u_0\in \V_{[0,1]}$ there exists $(u,\beta)\in\V_{[0,1],t\in T}\times\V_{t\in T}$ satisfying \eqref{mACobs} with $u\in H_{loc}^1(T;\V)\cap C^{0}(T;\V)$ and with $ u(0) = u_0$. 
\end{thm}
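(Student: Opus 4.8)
### Proof proposal for Theorem \ref{existence}

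The plan is to construct the solution as a limit of solutions to the mass-conserving semi-discrete scheme as the time-step tends to zero — i.e., to run the convergence machinery of \cite{Budd}, adapted to the mass-conserving setting. Concretely, I would fix a small time-step $\tau>0$, set $\lambda = \tau/\varepsilon$, and iterate the semi-discrete scheme \eqref{SDobs} (with the mass constraint) starting from $u_0$; the theory developed in Section \ref{mSD} guarantees that each step has a solution $u_{n}^\tau\in\V_{[0,1]}$ with $\mathcal{M}(u_n^\tau)=\mathcal{M}(u_0)$ and produces an associated $\beta_n^\tau\in\mathcal{B}(u_n^\tau)$. From these I would build a piecewise-defined interpolant $u^\tau \in \V_{[0,1],t\in T}$ (piecewise the exponential-integral form \eqref{mACobsexplicit} on each interval $[n\tau,(n+1)\tau)$, or a piecewise-linear interpolant, whichever makes the estimates cleanest) together with a piecewise-constant $\beta^\tau$.

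The key estimates I would need are: (i) a uniform $L^\infty$ bound, which is immediate since $u^\tau$ takes values in $[0,1]$; (ii) a uniform bound on $\beta^\tau - \bar\beta^\tau\mathbf{1}$ on bounded subintervals — this is exactly the content of the forthcoming Lemma \ref{gammalem} in the continuous case, and the corresponding discrete bound should be uniform in $\tau$ (this is where the $\varepsilon < $ something, or at least $\lambda<1$, smallness may enter, as it does in \cite{Budd}); and (iii) a uniform $H^1$-type bound on $u^\tau$ on bounded subintervals, obtained by combining (ii) with the explicit form \eqref{mACobsexplicit} or with the monotonic decrease of $\GL$ along the scheme. Given these, on each bounded interval $[0,T_0]$ I would extract (via Banach--Alaoglu / Arzelà--Ascoli, using compactness of the finite-dimensional $\V$) a subsequence $\tau_k\to 0$ with $u^{\tau_k}\to u$ uniformly and $\beta^{\tau_k}\rightharpoonup\beta$ weakly in $L^2$, then pass to the limit in the integral identity \eqref{mACobsexplicit}. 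A diagonal argument over an exhausting sequence of intervals $[0,k]$ gives a solution on all of $[0,\infty)$. One then checks the limit pair satisfies the required regularity ($u\in H^1_{loc}\cap C^0$) and that $\beta(t)\in\mathcal{B}(u(t))$ a.e.; the latter follows because $\mathcal{B}$ has closed graph and the constraint $u_i^{\tau}(t)\in\{0,1\}$ is preserved in the limit where relevant, while mass conservation passes to the limit trivially since $\mathcal{M}$ is linear and continuous.

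The main obstacle I expect is the passage to the limit in the obstacle inclusion $\beta(t)\in\mathcal{B}(u(t))$ together with controlling $\beta^\tau$: the set-valued map $\mathcal{B}$ is only upper semicontinuous, so weak $L^2$ convergence of $\beta^{\tau_k}$ must be paired carefully with the a.e. (or uniform) convergence of $u^{\tau_k}$, probably via a Minty-type / monotonicity argument using the weak form \eqref{mACobsweak} rather than the pointwise inclusion directly — testing \eqref{mACobsweaka} against fixed $\eta$ and using lower semicontinuity of the relevant quadratic terms. A secondary technical point is ensuring the uniform bound on $\beta^\tau-\bar\beta^\tau\mathbf{1}$: in the mass-conserving case the Lagrange-multiplier term $\bar\beta$ for the mass constraint must also be shown to be controlled (or shown to be irrelevant because only $\beta-\bar\beta\mathbf{1}$ enters the dynamics), which is where the convex-optimisation characterisation of the variational scheme from Section \ref{mSD} does the real work. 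I would also need to confirm that the interpolant's discrete mass constraint, $\mathcal{M}(u_n^\tau)=\mathcal{M}(u_0)$ for all $n$, is genuinely available from the semi-discrete theory — which it is, since the scheme is defined with that constraint built in — so that \eqref{mACobsweakb} holds in the limit.

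Alternatively, if the semi-discrete convergence theorem (Theorem \ref{SDlimit}) is proved independently of existence, existence follows as an immediate corollary of it; I would cross-reference that and present the construction above only to the extent that \ref{SDlimit} does not already subsume it.
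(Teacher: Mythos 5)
Your proposal is correct and matches the paper's approach: the paper proves existence exactly by deferring to the convergence of the semi-discrete scheme (Theorem \ref{SDlimit}), noting only that no circularity arises since the existence theorem is not used until after that convergence result is established. Your final paragraph identifies this precisely; the only cosmetic difference is that the paper's actual convergence proof handles the subdifferential inclusion via Ces\`aro sums (Banach--Saks) and a weak* $L^\infty$ argument rather than the Minty-type monotonicity route you sketch.
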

\begin{proof} 
We prove this as Theorem \ref{SDlimit}, by taking the limit as $\tau\downarrow 0$ of the semi-discrete approximations defined in \eqref{mSDobs}. (We avoid circularity as we do not use this theorem until after we have proved Theorem \ref{SDlimit}.)
\end{proof}
\section{Mass-conserving semi-discrete scheme and link to the MBO scheme}\label{mSD}
%\begin{mydef}[Mass-conserving MBO scheme]\label{mMBOdef}
%	Recall from \emph{\cite{vGGOB}} that the ordinary MBO scheme on a graph can be defined variationally by \[u_{n+1}\in \underset{u\in\V_{[0,1]}}{\argmin}\: \left\langle \mathbf{1}-2e^{-\tau\Delta}u_n,u\right\rangle_\V.\]
%	To define a mass-conserving MBO scheme, we add a mass constraint on the minimiser\emph{:} 
%	\[u_{n+1}\in \underset{\underset{\mathcal{M}(u) = \mathcal{M}(u_n)}{u\in \V_{[0,1]}}}{ \argmin }\: \left\langle \mathbf{1}-2e^{-\tau\Delta}u_n,u\right\rangle_\V.\]
%	Note that we can suppress the now constant $\ip{\mathbf{1},u}$ term. \end{mydef}
\begin{mydef}[Mass-conserving semi-discrete scheme]\label{mSDdef}
Building on the insight from \emph{\cite{Budd}}, we link the mass-conserving AC flow to the mass-conserving MBO scheme by defining the following \emph{mass-conserving semi-discrete scheme:}
\begin{equation}
	\label{mSDobs}
	u_{n+1} -e^{-\tau\Delta}u_n-\lambda u_{n+1}+\lambda\frac{\ip{u_{n+1},\mathbf{1} }}{\ip{\mathbf{1} ,\mathbf{1} }}\mathbf{1} =\lambda\beta_{n+1} -\lambda\frac{\ip{\beta_{n+1},\mathbf{1} }}{\ip{\mathbf{1} ,\mathbf{1} }}  \mathbf{1}
\end{equation} 
for $\beta_{n+1}\in\mathcal{B}(u_{n+1})$, recalling that $\lambda:=\tau/\varepsilon$. Recall that, by \eqref{oldbeta}, since $ \mathcal{B}(u_{n+1})$ is non-empty we must have $u_{n+1} \in \V_{[0,1]}$.  
\end{mydef} 
We check this conserves mass.
\begin{prop}
	For $u_{n+1}$ given by \eqref{mSDobs}, \[\mathcal{M}(u_{n+1}) = \mathcal{M}(u_n).\]
\end{prop}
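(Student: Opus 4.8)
The plan is to mimic exactly the mass-conservation argument already used for the continuous flow in Proposition \ref{mACmc}, but now for a single step of the scheme, so no calculus is needed --- just taking the $\V$-inner product of \eqref{mSDobs} with $\mathbf{1}$ and cancelling terms.

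First I would take $\langle\cdot,\mathbf{1}\rangle_\V$ of both sides of \eqref{mSDobs}. On the left this gives
$\langle u_{n+1},\mathbf{1}\rangle_\V - \langle e^{-\tau\Delta}u_n,\mathbf{1}\rangle_\V - \lambda\langle u_{n+1},\mathbf{1}\rangle_\V + \lambda\frac{\langle u_{n+1},\mathbf{1}\rangle_\V}{\langle\mathbf{1},\mathbf{1}\rangle_\V}\langle\mathbf{1},\mathbf{1}\rangle_\V$, and the last two terms cancel, leaving $\langle u_{n+1},\mathbf{1}\rangle_\V - \langle e^{-\tau\Delta}u_n,\mathbf{1}\rangle_\V$. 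On the right, symmetrically, $\lambda\langle\beta_{n+1},\mathbf{1}\rangle_\V - \lambda\frac{\langle\beta_{n+1},\mathbf{1}\rangle_\V}{\langle\mathbf{1},\mathbf{1}\rangle_\V}\langle\mathbf{1},\mathbf{1}\rangle_\V = 0$. Hence $\mathcal{M}(u_{n+1}) = \langle e^{-\tau\Delta}u_n,\mathbf{1}\rangle_\V$.

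It then remains to invoke the fact, already recorded in the Groundwork section, that graph diffusion conserves mass: $e^{-t\Delta}\mathbf{1}=\mathbf{1}$, hence by self-adjointness of $e^{-\tau\Delta}$ we have $\langle e^{-\tau\Delta}u_n,\mathbf{1}\rangle_\V = \langle u_n, e^{-\tau\Delta}\mathbf{1}\rangle_\V = \langle u_n,\mathbf{1}\rangle_\V = \mathcal{M}(u_n)$. Combining, $\mathcal{M}(u_{n+1}) = \mathcal{M}(u_n)$, which is the claim.

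There is essentially no obstacle here: the whole point of the construction of \eqref{mSDobs} is that the $\mathbf{1}$-component of the reaction and obstacle terms has been projected out, so taking the inner product with $\mathbf{1}$ annihilates exactly those terms and leaves only the diffusion step, which is itself mass-conserving. The only thing to be slightly careful about is that the computation does not require $\beta_{n+1}$ to lie in $\mathcal{B}(u_{n+1})$ --- it holds for any $\beta_{n+1}\in\V$ --- but since the scheme stipulates $\beta_{n+1}\in\mathcal{B}(u_{n+1})$ (and in particular $u_{n+1}\in\V_{[0,1]}$, so $u_{n+1}$ is a bona fide iterate) there is nothing further to verify.
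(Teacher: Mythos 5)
Your proof is correct and coincides with the paper's own argument: take the mass of both sides of \eqref{mSDobs}, cancel the projected-out terms, and use self-adjointness of $e^{-\tau\Delta}$ together with $e^{-\tau\Delta}\mathbf{1}=\mathbf{1}$. Nothing is missing.
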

\begin{proof}
	Taking the mass of both sides of \eqref{mSDobs} and cancelling gives \[\ip{u_{n+1},\mathbf{1} }=\ip{e^{-\tau\Delta}u_n,\mathbf{1} } = \ip{u_n,\mathbf{1} }\]
	with the final equality because $e^{-\tau\Delta}$ is self-adjoint and $e^{-\tau\Delta}\mathbf{1} = \mathbf{1} $.
\end{proof}
We express this scheme variationally, and link to the MBO scheme.
%So following the previous section $u_{n+1}$ is given by the $\V_{[0,1]}$ solutions to \be\begin{split}\forall\eta\in\V_{[0,1]},\: 0&\leq \ip{\beta(u_{n+1}),\eta - u_{n+1}}\\&=\left\langle\frac{\varepsilon}{\tau}\left(u_{n+1} -u_n\right)+\frac{1}{2},\eta-u_{n+1}\right\rangle_\V+\varepsilon\left\langle\nabla u_n,\nabla\eta-\nabla u_{n+1}\right\rangle_\mathcal{E}\end{split}\ee
\begin{thm}[Cf. \text{\cite[Theorem 12]{Budd}}]
\label{obsMMprop}
If $0\leq \tau\leq\varepsilon$ then the solutions to the semi-discrete scheme \eqref{mSDobs} obey
 \begin{equation}\begin{split}
	\label{mACobsMM}
	u_{n+1}\in \underset{\underset{\mathcal{M}(u) = \mathcal{M}(u_n)}{u\in \V_{[0,1]}}}{ \argmin }  &\: \lambda\left\langle u,\mathbf{1} -u\right \rangle_\V +  \left|\left|u-e^{-\tau\Delta}u_n\right|\right|^2_\V\\ & \simeq (1-\lambda)  \left|\left|u\right|\right|^2_\V-2\ip{u,e^{-\tau\Delta}u_n}.
\end{split}\end{equation}
In particular, when $\tau=\varepsilon$ we have 
\begin{equation}\label{mMBO}
	u_{n+1}\in \underset{\underset{\mathcal{M}(u) = \mathcal{M}(u_n)}{u\in \V_{[0,1]}}}{ \argmax }  \: \left\langle u,e^{-\tau\Delta}u_n\right\rangle_\V
\end{equation}
which is equivalent to the mass-conserving MBO scheme as in Definition \ref{mMBOdef}.
\end{thm}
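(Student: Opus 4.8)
The plan is to recognise \eqref{mACobsMM} as the minimisation of a \emph{convex} functional over a convex set and then to check that any solution of \eqref{mSDobs} satisfies the associated first-order optimality inequality. Write $g(u) := \lambda\langle u,\mathbf{1}-u\rangle_\V + \|u - e^{-\tau\Delta}u_n\|_\V^2$ and $C := \{u\in\V_{[0,1]} : \mathcal{M}(u) = \mathcal{M}(u_n)\}$, which is convex. Expanding the square and collecting terms, $g(u) = (1-\lambda)\|u\|_\V^2 + \langle u,\lambda\mathbf{1} - 2e^{-\tau\Delta}u_n\rangle_\V + \|e^{-\tau\Delta}u_n\|_\V^2$, so $g$ has Hessian $2(1-\lambda)I$ and is convex precisely when $\lambda\le 1$, i.e. when $\tau\le\varepsilon$ (strictly convex for $\tau<\varepsilon$, linear for $\tau=\varepsilon$); moreover $g$ is smooth. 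Hence by the standard characterisation of constrained minimisers of a differentiable convex function over a convex set, it suffices to show that if $(u_{n+1},\beta_{n+1})$ solves \eqref{mSDobs} then $u_{n+1}\in C$ and $\langle\nabla_\V g(u_{n+1}), v-u_{n+1}\rangle_\V\ge 0$ for all $v\in C$. Only this ``sufficiency'' half of the characterisation is used, so no constraint qualification is needed here.

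That $u_{n+1}\in C$ is immediate: $\mathcal{B}(u_{n+1})\ne\emptyset$ forces $u_{n+1}\in\V_{[0,1]}$ by \eqref{oldbeta}, and $\mathcal{M}(u_{n+1})=\mathcal{M}(u_n)$ by the preceding mass-conservation Proposition. Fix $v\in C$; then $\langle\mathbf{1},v-u_{n+1}\rangle_\V = \mathcal{M}(v)-\mathcal{M}(u_{n+1}) = 0$, so any multiple of $\mathbf{1}$ may be discarded when pairing against $v-u_{n+1}$. Using $\nabla_\V g(u_{n+1}) = \lambda(\mathbf{1}-2u_{n+1}) + 2(u_{n+1}-e^{-\tau\Delta}u_n)$ and discarding $\lambda\mathbf{1}$,
\[ \langle\nabla_\V g(u_{n+1}),\, v-u_{n+1}\rangle_\V = 2\langle u_{n+1} - \lambda u_{n+1} - e^{-\tau\Delta}u_n,\, v-u_{n+1}\rangle_\V. \]
Rearranging \eqref{mSDobs} gives $u_{n+1} - \lambda u_{n+1} - e^{-\tau\Delta}u_n = \lambda\beta_{n+1} - \lambda(\overline{\beta_{n+1}}+\overline{u_{n+1}})\mathbf{1}$; discarding the $\mathbf{1}$-term once more leaves $\langle\nabla_\V g(u_{n+1}), v-u_{n+1}\rangle_\V = 2\lambda\langle\beta_{n+1}, v-u_{n+1}\rangle_\V$. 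Finally, since $\beta_{n+1}\in\mathcal{B}(u_{n+1})$, a case check at each vertex ($(\beta_{n+1})_i\ge 0$ where $(u_{n+1})_i=0$; $=0$ where $(u_{n+1})_i\in(0,1)$; $\le 0$ where $(u_{n+1})_i=1$) against $v_i\in[0,1]$ shows $(\beta_{n+1})_i(v_i-(u_{n+1})_i)\ge 0$ for every $i$, whence $\langle\beta_{n+1}, v-u_{n+1}\rangle_\V\ge 0$; since $\lambda\ge 0$, the optimality inequality follows and $u_{n+1}$ solves \eqref{mACobsMM}.

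For the equivalent forms: on $C$ the quantity $\mathcal{M}(u)=\langle u,\mathbf{1}\rangle_\V$ equals the constant $\mathcal{M}(u_n)$ and $\|e^{-\tau\Delta}u_n\|_\V^2$ is constant, so $\lambda\langle u,\mathbf{1}-u\rangle_\V + \|u - e^{-\tau\Delta}u_n\|_\V^2 \simeq (1-\lambda)\|u\|_\V^2 - 2\langle u,e^{-\tau\Delta}u_n\rangle_\V$, as claimed. Taking $\tau=\varepsilon$, i.e. $\lambda=1$, collapses this to $-2\langle u,e^{-\tau\Delta}u_n\rangle_\V$, whose minimisation over $C$ is the maximisation of $\langle u,e^{-\tau\Delta}u_n\rangle_\V$, i.e. \eqref{mMBO}; and comparing with Definition \ref{mMBOdef}, on $C$ we have $\langle\mathbf{1}-2e^{-\tau\Delta}u_n,u\rangle_\V = \mathcal{M}(u_n) - 2\langle e^{-\tau\Delta}u_n,u\rangle_\V \simeq -2\langle e^{-\tau\Delta}u_n,u\rangle_\V$, so the two variational problems coincide.

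The work here is essentially bookkeeping rather than a single hard estimate, and the one genuinely load-bearing point is the convexity of $g$: it is exactly the hypothesis $\tau\le\varepsilon$ (i.e. $\lambda\le 1$) that makes the Hessian $2(1-\lambda)I$ positive semi-definite and hence licenses the first-order inequality to be \emph{sufficient} for optimality. The remaining care is in tracking the projection-onto-$\mathbf{1}$ correction terms — one in $\nabla_\V g$ and two in the rearrangement of \eqref{mSDobs} — and repeatedly using $\langle\mathbf{1},v-u_{n+1}\rangle_\V = 0$ to annihilate them, so that everything collapses onto the subdifferential inequality $\langle\beta_{n+1},v-u_{n+1}\rangle_\V\ge 0$. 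By contrast, the reverse implication and the explicit description of the minimisers — neither needed for this theorem — are where the convex-optimisation machinery advertised in the introduction will be required.
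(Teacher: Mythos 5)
Your proposal is correct and is essentially the paper's own argument in different packaging: the paper verifies the minimality inequality directly, expanding $g(\eta)-g(u_{n+1})$ via \eqref{mSDobs} into $2\lambda\langle \eta-u_{n+1},\beta_{n+1}\rangle_\V+(1-\lambda)\|\eta-u_{n+1}\|_\V^2\ge 0$, which is exactly your first-order term plus the convexity remainder that you invoke abstractly. The feasibility check, the use of $\langle \mathbf{1},\eta-u_{n+1}\rangle_\V=0$ to kill the constant-vector corrections, the sign case-check on $\beta_{n+1}$, and the $\lambda=1$ reduction to \eqref{mMBO} all coincide with the paper's proof.
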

\begin{proof}
Let $u_{n+1}$ solve \eqref{mSDobs}. First, note that $\mathcal{B}(u_{n+1})$ is non-empty and so $u_{n+1}\in\V_{[0,1]}$. Furthermore, we know that $\mathcal{M}(u_{n+1}) = \mathcal{M}(u_n) =: M$.  

Next, expanding out the functional for $\mathcal{M}(u)=M$ gives: 
\[ \begin{split}
	\lambda\left\langle u,\mathbf{1} -u\right \rangle_\V +  \left|\left|u-e^{-\tau\Delta}u_n\right|\right|^2_\V &= \lambda M+(1-\lambda)\left|\left|u\right|\right|^2_\V  -2\ip{u,e^{-\tau\Delta}u_n}+\left|\left|e^{-\tau\Delta}u_n\right|\right|^2_\V\\
 		&\simeq (1-\lambda)\left|\left|u\right|\right|^2_\V -2\ip{u,e^{-\tau\Delta}u_n}.
\end{split}  \]
We seek to prove that for $\lambda\leq 1$ and $\forall\eta\in\V_{[0,1]}$ such that $\ip{\eta,\mathbf{1} }=M=\ip{u_{n+1},\mathbf{1} }$:
%\[ \begin{split}\tau\left\langle u_{n+1},\Delta u_n \right \rangle_\V& + \frac{1}{2}\left\langle u_{n+1},1-u_{n+1} \right \rangle_\V + \frac{1}{2}\ip{u_{n+1}-u_n,u_{n+1}-u_n}\\ &\leq
%\tau\left\langle \eta,\Delta u_n \right \rangle_\V + \frac{1}{2}\left\langle \eta,1-\eta\right \rangle_\V+ \frac{1}{2}\ip{\eta-u_n,\eta-u_n}\end{split}\]
\[
(1-\lambda)\ip{u_{n+1},u_{n+1}}-2\ip{u_{n+1},e^{-\tau\Delta}u_n}\leq (1-\lambda)\ip{\eta,\eta}-2\ip{\eta,e^{-\tau\Delta}u_n}
\]
By rearranging and cancelling this is equivalent to (noting that $\ip{\eta-u_{n+1},\mathbf{1} }=0$) \[ \begin{split}
	0& \leq - \left\langle \eta-u_{n+1},2 e^{-\tau\Delta}u_n \right \rangle_\V + (1-\lambda)\left(\ip{\eta,\eta}-\ip{u_{n+1},u_{n+1}}\right)\\
	&= \left\langle \eta-u_{n+1},-2 e^{-\tau\Delta}u_n +(1-\lambda)(\eta+u_{n+1}) \right \rangle_\V\\
	&= \left\langle \eta-u_{n+1},2(1-\lambda)u_{n+1}-2 e^{-\tau\Delta}u_n +(1-\lambda)(\eta-u_{n+1}) \right \rangle_\V\\
	&= \left\langle \eta-u_{n+1},2\lambda\beta_{n+1}-2\lambda\overline{\beta_{n+1}} \mathbf{1}-2\lambda\overline{u_{n+1}}\mathbf{1}  +(1-\lambda)(\eta-u_{n+1}) \right  \rangle_\V \:\:\text{by \eqref{mSDobs}}\\
	&= 2\lambda\left\langle \eta-u_{n+1},\beta_{n+1}\right \rangle_\V +(1-\lambda)||\eta-u_{n+1}||^2_\V. 
\end{split}  \] 
As $\beta_{n+1}\in\mathcal{B}(u_{n+1})$ and $\eta_i\in [0,1]$: either $(\beta_{n+1})_i=0$, or $(\beta_{n+1})_i\geq 0$ when $\eta_i -(u_{n+1})_i=\eta_i\geq 0$, or $(\beta_{n+1})_i\leq 0$ when $\eta_i -(u_{n+1})_i=\eta_i -1\leq 0$. Thus $\left\langle \eta-u_{n+1},\beta_{n+1}\right \rangle_\V\geq 0$.

Finally, for $\lambda=1$ the quadratic term in \eqref{mACobsMM} cancels and we get the equation \eqref{mMBO}.\end{proof}

\subsection{Solving the variational equations}

Compared to \cite{Budd} the addition of the mass conservation constraint substantially increases the difficulty in solving the equations from Theorem \ref{mACobsMM}. We here employ the techniques of convex optimisation, particularly the Krein--Milman theorem, complementary slackness and strong duality, to help resolve this difficulty.

We consider the set of feasible solutions to \eqref{mACobsMM} and \eqref{mMBO}. \begin{mydef} For a given $M=\mathcal{M}(u_0)$ for some $u_0\in\V_{[0,1]}$, we define the hyperplane $S_M := \left\{ u\in \V\,\middle|\, \ip{u,\mathbf{1}} = M\right\}$. We can visualise this as the plane through $u_0$ with $\V$-normal vector $\mathbf{1}$. Then we write the set of feasible solutions to \eqref{mACobsMM} and \eqref{mMBO} 
\be X:=\V_{[0,1]}\cap S_M.\ee Note that $X$ is compact, and is the intersection of two convex sets, so is  convex. Furthermore, note that $X$ can be described as the set of solutions to the linear inequalities \begin{align*} & \forall i \in V \;\ip{u,\chi_{\{i\}}}\geq 0 \text{ and }\ip{u,\chi_{\{i\}}}\leq d_i^r& \text{ and }& &\ip{u,\mathbf{1}} \geq M \text{ and } \ip{u,\mathbf{1}} \leq M \end{align*}
and thus is said to be a \emph{polyhedral} set.

\end{mydef} 
%Recall the following definition from convex analysis.
\begin{mydef}
	For a convex set $C$, define $x\in C$ to be an \emph{extreme point} of $C$ when \[ \forall y,z\in C, \forall t\in(0,1) \:\:\: \big(x = ty+(1-t)z \Rightarrow y = z = x \big)\] and write $\Ext C$ for the subset of $C$ consisting of all such points.
\end{mydef}
We can then characterise the extreme points of the feasible set.
\begin{prop}\label{extprop}
The set $\Ext X$ of extreme points of $X$ is finite and is given by \[ \Ext X = \left \{u\in X\,\middle|\, \exists i^* \in V\: \forall j\in V\setminus\{i^*\} \: u_j\in\{0,1\} \right\}.\]
\end{prop}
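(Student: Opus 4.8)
The plan is to prove the two inclusions separately. For the easier direction, $\supseteq$, I would take $u\in X$ for which there exists $i^*\in V$ with $u_j\in\{0,1\}$ for all $j\neq i^*$, and show $u\in\Ext X$. Suppose $u = ty + (1-t)z$ with $y,z\in X$ and $t\in(0,1)$. For each $j\neq i^*$, since $u_j$ is $0$ (resp.\ $1$), which is the minimum (resp.\ maximum) value attainable in $\V_{[0,1]}$, a standard convexity argument forces $y_j = z_j = u_j$ (a strict convex combination cannot equal an endpoint unless both points are at that endpoint). Then $y - z$ is supported only on $\{i^*\}$; but $y,z\in S_M$ means $\ip{y-z,\mathbf{1}} = 0$, i.e.\ $d_{i^*}^r(y_{i^*}-z_{i^*}) = 0$, so $y_{i^*} = z_{i^*}$ and hence $y = z = u$.

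For the harder direction, $\subseteq$, I would argue by contrapositive: take $u\in X$ that has \emph{at least two} vertices $i_1\neq i_2$ with $u_{i_1},u_{i_2}\in(0,1)$, and construct a nontrivial decomposition showing $u\notin\Ext X$. The idea is exactly the perturbation already used in the ``Note'' inside the proof of Theorem~\ref{ACobsweak}: pick $\delta>0$ small enough that $u_{i_1}\pm\delta d_{i_1}^{-r}\in(0,1)$ and $u_{i_2}\mp\delta d_{i_2}^{-r}\in(0,1)$ remain feasible (possible since both coordinates are in the open interval), set $\xi := \delta d_{i_1}^{-r}\chi_{\{i_1\}} - \delta d_{i_2}^{-r}\chi_{\{i_2\}}$, and note $\ip{\xi,\mathbf{1}} = \delta - \delta = 0$, so $u\pm\xi\in X$. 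Then $u = \tfrac12(u+\xi) + \tfrac12(u-\xi)$ with $u+\xi\neq u-\xi$, so $u\notin\Ext X$. This shows $\Ext X$ is contained in the set where at most one coordinate lies strictly between $0$ and $1$, which is the claimed set.

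Finiteness is then immediate from the characterisation: an extreme point is determined by the choice of $i^*$ (finitely many), the choice of which $j\neq i^*$ have $u_j = 1$ versus $u_j = 0$ (finitely many subsets), and then $u_{i^*}$ is uniquely pinned down by the constraint $\ip{u,\mathbf{1}} = M$; so $|\Ext X| \leq |V|\cdot 2^{|V|-1}$, hence finite. (One should also check consistency: such a $u$ actually lies in $X$, i.e.\ the forced value of $u_{i^*}$ lands in $[0,1]$; points failing this are simply not in $\Ext X$, which is fine since the proposition characterises $\Ext X$ as a subset of $X$.)

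The main obstacle is getting the harder inclusion's perturbation argument airtight, in particular verifying that the two-coordinate perturbation $\xi$ keeps \emph{all} coordinates in $[0,1]$ while preserving the mass constraint exactly — but since we only move two coordinates that are strictly interior, a sufficiently small $\delta$ works, and the degree weights $d_i^r$ are handled by the explicit choice of $\xi$. A minor subtlety is the edge case where $u$ has exactly one interior coordinate: then no such pair $(i_1,i_2)$ exists, the perturbation cannot be carried out (any mass-preserving move off that single coordinate would have to change another coordinate away from its endpoint), and indeed such $u$ should be — and is — extreme, consistent with the claimed formula.
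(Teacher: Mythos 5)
Your proof is correct and follows essentially the same route as the paper: the same two-coordinate mass-preserving perturbation $\pm\delta(d_{i_1}^{-r}\chi_{\{i_1\}}-d_{i_2}^{-r}\chi_{\{i_2\}})$ for the contrapositive of $\subseteq$, and the same endpoint/convexity argument plus the mass constraint for $\supseteq$. The only (harmless) difference is finiteness: the paper cites the standard fact that a polyhedral set has finitely many extreme points, whereas you derive the bound $|V|\cdot 2^{|V|-1}$ directly from the characterisation, which is a perfectly valid and more self-contained alternative.
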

\begin{proof} Since $X$ is polyhedral, $\Ext X$ is finite by a standard result \cite[Corollary 1.3.1]{AJ}. 
Suppose $u\in X$ and $\exists i, j \in V$ such that $i\neq j$ and $u_i,u_j\in(0,1)$. Now for $\delta >0$ let \[\begin{split}v_1&:=u - \delta d_i^{-r}\chi_{\{i\}} + \delta d_j^{-r}\chi_{\{j\}}, \\
	v_2&:= u+\delta d_i^{-r}\chi_{\{i\}} - \delta d_j^{-r}\chi_{\{j\}}.\end{split}\]
Then $\mathcal{M}(v_1) =\mathcal{M}(v_2) =  \mathcal{M}(u) -\delta +\delta = \mathcal{M}(u) = M$ so $v_1,v_2 \in S_M$. And for $\delta < \min\left\{d_i^r u_i,d_i^r(1- u_i),d_j^r u_j,d_j^r (1-u_j)\right \}$ we have $v_1,v_2\in \V_{[0,1]}$. Therefore we have $u = \frac{1}{2}v_1+\frac{1}{2}v_2$ for $v_1,v_2\in X\setminus\{u\}$. Therefore $u\notin\Ext X$.

Now let $u\in \left\{u\in X\,\middle|\, \exists i^* \in V\: \forall j\in V\setminus\{i^*\} \: u_j\in\{0,1\} \right\}$, and suppose $u= tv_1+(1-t)v_2$ for some $v_1,v_2\in X$ and $0<t<1$. As $\Ext( [0,1] )= \{0,1\}$ we have that $u_i = 0$ if and only if  $(v_1)_i=(v_2)_i = 0$ and likewise for $u_i=1$. So $v_1 - v_2 = \theta\chi_{\{i^*\}}$ for some $\theta$, and \[ 0 = \ip{v_1-v_2,\mathbf{1}} = \theta\ip{\chi_{\{i^*\}},\mathbf{1}} = \theta d_{i^*}^r\] and so $\theta =0$, i.e. $v_1 = v_2$. Thus $u= tv_1+(1-t)v_2\Rightarrow v_1=v_2=u$, so $u\in\Ext X$.
\end{proof}
For tidiness, we define some useful notation.
\begin{mydef}\label{Adef}
	For $u\in\V_{[0,1]}$ and $\tau > 0$ define the set \begin{equation}
		 A_{u,\tau} := \{ \alpha \in [0,1] \,|\, \exists i\in V\:(e^{-\tau\Delta}u)_i = \alpha\} 
	\end{equation} with ordering $\alpha_1<\alpha_2<...<\alpha_K$ for the elements of $A_{u,\tau}$, where $K = |A_{u,\tau}|$.
	Define the quantities \begin{equation}
 		a_{u,\tau,\alpha} := \sum_{i:(e^{-\tau\Delta}u)_i = \alpha} d_i^r.
 \end{equation}\end{mydef}
%We note a useful fact.
\begin{prop}\label{Aprop} If $\tau >0$, then $0\in A_{u,\tau}\Rightarrow u=\mathbf{0}$, and $1\in A_{u,\tau} \Rightarrow u=\mathbf{1}$. 
\end{prop}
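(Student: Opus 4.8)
The plan is to reduce both implications to the strict positivity of the graph heat kernel on a connected graph. First I would recall (it is standard, but worth pinning down here) that for $\tau > 0$ the operator $e^{-\tau\Delta}$, viewed as a matrix acting on $\mathbb{R}^V$ by ordinary matrix--vector multiplication, has all entries strictly positive. To see this, set $M := sI - \Delta$ with $s := \max_{i\in V} d_i^{1-r}$. Since $G$ is simple, $\Delta_{ii} = d_i^{-r}\sum_j \omega_{ij} = d_i^{1-r}$ and $\Delta_{ij} = -d_i^{-r}\omega_{ij}$ for $i\neq j$, so $M$ is entrywise nonnegative; moreover the off-diagonal support of $M$ is exactly the edge set of $G$, which is connected, so $M$ is irreducible. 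A standard Perron--Frobenius-type argument then gives that $e^{\tau M} = \sum_{n\geq 0} \tau^n M^n/n!$ is entrywise strictly positive for $\tau > 0$ (the $n=0$ term takes care of the diagonal, and irreducibility of $M$ supplies, for each ordered pair $i\neq j$, some power $M^n$ with positive $(i,j)$ entry). Hence $e^{-\tau\Delta} = e^{-\tau s}\, e^{\tau M}$ is entrywise strictly positive.

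Next I would combine this with the identity $e^{-\tau\Delta}\mathbf{1} = \mathbf{1}$ already noted in the groundwork, which says precisely that every row of the matrix $e^{-\tau\Delta}$ sums to $1$. Together with strict positivity, this means that for each $i\in V$ the scalar $(e^{-\tau\Delta}u)_i = \sum_{j\in V}(e^{-\tau\Delta})_{ij}\,u_j$ is a convex combination of the values $\{u_j\}_{j\in V}$ in which every coefficient is strictly positive. Since $u\in\V_{[0,1]}$, each $u_j$ lies in $[0,1]$.

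The two implications now follow at once. If $0\in A_{u,\tau}$, then $(e^{-\tau\Delta}u)_i = 0$ for some $i$; a convex combination of the nonnegative numbers $u_j$ with strictly positive weights can vanish only if every $u_j = 0$, i.e. $u = \mathbf{0}$. Symmetrically, if $1\in A_{u,\tau}$, then $(e^{-\tau\Delta}u)_i = 1$ for some $i$, and a convex combination of numbers in $[0,1]$ with strictly positive weights attains the value $1$ only if every $u_j = 1$, i.e. $u = \mathbf{1}$. The only genuine content is the strict positivity of $e^{-\tau\Delta}$, which I expect to be the main point to justify carefully even though it is classical; once that is in hand, the rest is a one-line convexity argument. (An alternative route would be a strong-maximum-principle argument applied to the diffusion flow $v(t) = e^{-t\Delta}u$, using that $v$ stays in $\V_{[0,1]}$ and that a value $0$ or $1$ attained at a positive time must propagate along edges to the whole connected graph; but the explicit positivity argument is cleaner.)
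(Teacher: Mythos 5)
Your proof is correct, but it takes a different (more self-contained) route than the paper: the paper's entire proof is a one-line citation of \cite[Lemma 2.6(d)]{vGGOB}, which packages exactly the comparison/positivity principle you establish by hand. What you have written is, in effect, a proof of that cited lemma: the decomposition $e^{-\tau\Delta} = e^{-\tau s}e^{\tau M}$ with $M = sI - \Delta$ entrywise nonnegative and irreducible (using connectedness and strictly positive edge weights) gives entrywise strict positivity of $e^{-\tau\Delta}$, and combining this with the stochasticity $e^{-\tau\Delta}\mathbf{1} = \mathbf{1}$ reduces both implications to the observation that a strictly positive convex combination of numbers in $[0,1]$ attains an endpoint only if all the numbers equal that endpoint. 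All the steps check out (in particular $\Delta_{ii} = d_i^{1-r}$ because the graph is simple, and $d_i > 0$ by connectedness, so $s$ is well defined and $M_{ii}\geq 0$). The trade-off is the usual one: the paper's version is shorter and leans on an external reference, while yours is self-contained and makes explicit the only nontrivial ingredient, namely the strict positivity of the graph heat kernel on a connected, positively weighted graph. Either is acceptable; if you keep your version, it would be worth noting that it reproves the content of \cite[Lemma 2.6(d)]{vGGOB} rather than presenting it as new.
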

\begin{proof}
Follows immediately from \cite[Lemma 2.6(d)]{vGGOB}.
\end{proof}
\subsection{The MBO case: $\lambda =1$}
\begin{mydef}\label{Sdef}
Define the set of solutions to \eqref{mMBO}
\begin{equation}
	S_{\tau,u_n}:= \underset{u\in X}{ \argmax }  \: \left\langle u,e^{-\tau\Delta}u_n\right\rangle_\V.
\end{equation}
This is convex as the objective function is linear and $X$ is convex, compact as it is a closed subset of $X$, and non-empty as $X$ is compact so the continuous objective function attains its maxima. \end{mydef}
\begin{prop}
	$S_{\tau,u_n}$ is a face of $X$, i.e. if $u,v\in X$ and $t\in(0,1)$, then \[tu + (1-t)v\in S_{\tau,u_n} \Rightarrow u,v\in S_{\tau,u_n}.\] 
\end{prop}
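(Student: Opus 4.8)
The plan is to show directly that $S_{\tau,u_n}$ satisfies the face property, using the linearity of the objective function $\langle u, e^{-\tau\Delta}u_n\rangle_\V$. First I would let $M^* := \max_{u\in X}\langle u, e^{-\tau\Delta}u_n\rangle_\V$ denote the optimal value, which exists and is finite because $X$ is compact and the objective is continuous. Then $S_{\tau,u_n} = \{u\in X \mid \langle u, e^{-\tau\Delta}u_n\rangle_\V = M^*\}$, and by the definition of a maximum, $\langle u, e^{-\tau\Delta}u_n\rangle_\V \le M^*$ for every $u\in X$.

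Next, suppose $u,v\in X$ and $t\in(0,1)$ with $tu + (1-t)v \in S_{\tau,u_n}$. By linearity of the inner product in its first argument,
\[
M^* = \langle tu + (1-t)v,\, e^{-\tau\Delta}u_n\rangle_\V = t\langle u, e^{-\tau\Delta}u_n\rangle_\V + (1-t)\langle v, e^{-\tau\Delta}u_n\rangle_\V.
\]
Since $\langle u, e^{-\tau\Delta}u_n\rangle_\V \le M^*$ and $\langle v, e^{-\tau\Delta}u_n\rangle_\V \le M^*$, and since $t, 1-t > 0$ are positive weights summing to $1$, the only way the convex combination can equal $M^*$ is if both terms are individually equal to $M^*$; otherwise the combination would be strictly less than $M^*$. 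Hence $\langle u, e^{-\tau\Delta}u_n\rangle_\V = \langle v, e^{-\tau\Delta}u_n\rangle_\V = M^*$, so $u, v\in S_{\tau,u_n}$, as required.

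This argument is entirely routine — there is no real obstacle, since the result is the standard fact that the maximiser set of a linear functional over a convex set is a face. The only points needing care are that $X$ is nonempty and compact (so $M^*$ is attained and the set is well-defined, both already noted in Definition \ref{Sdef}) and that $t$ and $1-t$ are strictly positive, which is given by $t\in(0,1)$. One could phrase the final step via the contrapositive: if, say, $\langle u, e^{-\tau\Delta}u_n\rangle_\V < M^*$, then $t\langle u, e^{-\tau\Delta}u_n\rangle_\V + (1-t)\langle v, e^{-\tau\Delta}u_n\rangle_\V < tM^* + (1-t)M^* = M^*$, contradicting membership of $tu+(1-t)v$ in $S_{\tau,u_n}$.
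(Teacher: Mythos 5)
Your proposal is correct and follows essentially the same argument as the paper: both use the linearity of the objective to split the optimal value as a convex combination of the two values, then conclude each must equal the maximum since neither can exceed it. The paper phrases the final step as a rearrangement ($t\langle u,e^{-\tau\Delta}u_n\rangle_\V \ge tM^*$) while you phrase it via the contrapositive, but this is only a cosmetic difference.
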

\begin{proof}
	Let $u,v\in X$, $t\in(0,1)$, and $tu + (1-t)v\in S_{\tau,u_n}$. Then \[t\left\langle u,e^{-\tau\Delta}u_n\right\rangle_\V +(1-t)\left\langle v,e^{-\tau\Delta}u_n\right\rangle_\V = \underset{w\in X}{ \max }  \: \left\langle w,e^{-\tau\Delta}u_n\right\rangle_\V\]
	and so 
\[
t\left\langle u,e^{-\tau\Delta}u_n\right\rangle_\V \geq \underset{w\in X}{ \max }  \: \left\langle w,e^{-\tau\Delta}u_n\right\rangle_\V - (1-t)\, \underset{w\in X}{ \max }  \: \left\langle w,e^{-\tau\Delta}u_n\right\rangle_\V = t \,\underset{w\in X}{ \max }  \: \left\langle w,e^{-\tau\Delta}u_n\right\rangle_\V
\]
and likewise for $\left\langle v,e^{-\tau\Delta}u_n\right\rangle_\V$. Hence 
\[\left\langle u,e^{-\tau\Delta}u_n\right\rangle_\V =\left\langle v,e^{-\tau\Delta}u_n\right\rangle_\V = \underset{w\in X}{ \max }  \: \left\langle w,e^{-\tau\Delta}u_n\right\rangle_\V,\] which is to say that $u,v\in S_{\tau,u_n}$.
\end{proof}
\begin{prop}\label{extSprop}
	The extreme points of $S_{\tau,u_n}$ are given by \[ \Ext S_{\tau,u_n} = S_{\tau,u_n}\cap \Ext X \] and the solutions to \eqref{mMBO} are given by the convex hull of the extremal solutions, i.e. \[ S_{\tau,u_n} = \operatorname{conv}(S_{\tau,u_n}\cap \Ext X ).\]
\end{prop}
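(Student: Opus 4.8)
The plan is to establish the two assertions in turn, using the fact just proved that $S_{\tau,u_n}$ is a face of $X$ together with the finite-dimensional Krein--Milman (Minkowski) theorem.

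First I would prove the set identity $\Ext S_{\tau,u_n} = S_{\tau,u_n}\cap\Ext X$. For the inclusion ``$\supseteq$'', take $u\in S_{\tau,u_n}\cap\Ext X$ and suppose $u = tv+(1-t)w$ with $v,w\in S_{\tau,u_n}$ and $t\in(0,1)$; since $S_{\tau,u_n}\subseteq X$ we have $v,w\in X$, so $u\in\Ext X$ forces $v=w=u$, whence $u\in\Ext S_{\tau,u_n}$. For the inclusion ``$\subseteq$'', take $u\in\Ext S_{\tau,u_n}$ (so in particular $u\in S_{\tau,u_n}$) and suppose $u = tv+(1-t)w$ with $v,w\in X$ and $t\in(0,1)$; because $S_{\tau,u_n}$ is a face of $X$ and $u\in S_{\tau,u_n}$, both $v,w\in S_{\tau,u_n}$, and then $u\in\Ext S_{\tau,u_n}$ forces $v=w=u$, so $u\in\Ext X$ and hence $u\in S_{\tau,u_n}\cap\Ext X$.

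Second, I would note that $S_{\tau,u_n}$ is compact and convex (as recorded in Definition~\ref{Sdef}) --- indeed it is a face of the polytope $X$, hence itself a polytope --- so by the finite-dimensional Krein--Milman theorem $S_{\tau,u_n} = \operatorname{conv}(\Ext S_{\tau,u_n})$. Substituting the identity from the first step gives $S_{\tau,u_n} = \operatorname{conv}(S_{\tau,u_n}\cap\Ext X)$, which is the claim.

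I do not expect a genuine obstacle here; the one point needing care is the direction in which the face property is invoked: it is used precisely to push a convex decomposition of a point $u\in S_{\tau,u_n}$ taken in the ambient set $X$ back into $S_{\tau,u_n}$, which is exactly what lets an extreme point of $S_{\tau,u_n}$ be recognised as an extreme point of $X$.
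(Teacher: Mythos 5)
Your proposal is correct and follows essentially the same route as the paper: both directions of the identity $\Ext S_{\tau,u_n} = S_{\tau,u_n}\cap\Ext X$ are established using exactly the face property in the same way, and the convex-hull statement is then obtained from the finite-dimensional Krein--Milman theorem. No gaps.
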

\begin{proof}
	Let $u\in S_{\tau,u_n}\cap \Ext X $. Then if $v_1,v_2\in S_{\tau,u_n}\subseteq X$, $t\in (0,1)$ and $u = tv_1 +(1-t)v_2$, then $v_1 = v_2$ since $u\in \Ext X$. So $u\in \Ext S_{\tau,u_n}$.
	
	Next, let $u\in \Ext S_{\tau,u_n}\subseteq S_{\tau,u_n}$. Then if $v_1,v_2\in X$ and $u = tv_1 +(1-t)v_2$, then $v_1,v_2\in S_{\tau,u_n}$ as $S_{\tau,u_n}$ is a face, and so $v_1 = v_2$ since $u\in \Ext S_{\tau,u_n}$. Hence $u\in S_{\tau,u_n}\cap \Ext X $. 
	
	So $\Ext S_{\tau,u_n} = S_{\tau,u_n}\cap \Ext X $, and finally we apply the Krein--Milman Theorem (see e.g. \cite[3.23]{Rudin}), which entails in particular that a finite-dimensional compact convex set is the convex hull of its exteme points.
\end{proof}
\begin{cor} \label{abcor} For $\mathcal{M}(u_0)=M$, there exists a trajectory $u_n$ obeying \eqref{mMBO} such that \[\forall n\in\mathbb{N}, \: u_n\in \Ext X = \left \{u\in X\,\middle|\, \exists i^* \in V\: \forall j\in V\setminus\{i^*\} \: u_j\in\{0,1\} \right\}. \]
\end{cor}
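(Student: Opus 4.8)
The plan is to build the trajectory by induction on $n$, at each step choosing a solution of \eqref{mMBO} that happens to be an extreme point of the feasible set $X$. The tool that makes this possible is Proposition \ref{extSprop}: it identifies the solution set $S_{\tau,u_n}$ with the convex hull of $S_{\tau,u_n}\cap\Ext X$. Since $S_{\tau,u_n}$ is non-empty (it is a set of maximisers of a continuous function over the non-empty compact set $X$, as recorded in Definition \ref{Sdef}), its convex-hull description forces $S_{\tau,u_n}\cap\Ext X\neq\emptyset$, so an extremal solution is always available.

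Concretely, one first observes that the trajectory may be started at any $u_0\in\Ext X$ with $\mathcal{M}(u_0)=M$ --- such points exist and are described explicitly by Proposition \ref{extprop} --- or, if a particular $u_0$ with $\mathcal{M}(u_0)=M$ is prescribed, the argument below still produces $u_n\in\Ext X$ for all $n\geq1$. For the inductive step, suppose $u_n\in\Ext X$ has been chosen. Because $\Ext X\subseteq X\subseteq S_M$ we have $\mathcal{M}(u_n)=M$, so \eqref{mMBO} with current iterate $u_n$ is exactly the maximisation defining $S_{\tau,u_n}$; by the remark above we may pick $u_{n+1}\in S_{\tau,u_n}\cap\Ext X$. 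Then $u_{n+1}$ obeys \eqref{mMBO}, lies in $\Ext X$, and again satisfies $\mathcal{M}(u_{n+1})=M$, so the induction continues. Iterating yields the desired trajectory, and the explicit formula for $\Ext X$ in the statement is precisely Proposition \ref{extprop}.

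I do not expect a genuine obstacle here: the corollary is essentially a bookkeeping consequence of Propositions \ref{extprop} and \ref{extSprop}. The only subtlety worth stating explicitly is the implication that $S_{\tau,u_n}$ non-empty together with $S_{\tau,u_n}=\operatorname{conv}(S_{\tau,u_n}\cap\Ext X)$ yields $S_{\tau,u_n}\cap\Ext X\neq\emptyset$ (the convex hull of the empty set being empty), together with the observation that every extremal solution automatically carries the correct mass $M$, which is what allows the induction to be repeated indefinitely.
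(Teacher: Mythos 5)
Your proposal is correct and follows essentially the same route as the paper: the paper's own proof is precisely the observation that $S_{\tau,u_n}\neq\emptyset$ together with $S_{\tau,u_n}=\operatorname{conv}(S_{\tau,u_n}\cap\Ext X)$ forces $S_{\tau,u_n}\cap\Ext X\neq\emptyset$, so an extremal solution can be selected at every step. Your additional bookkeeping (the induction and the remark that extremal solutions carry the correct mass $M$) is a harmless elaboration of what the paper leaves implicit.
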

\begin{proof}
Follows immediately from the fact that $S_{\tau,u_n}$ is non-empty, and so $S_{\tau,u_n}\cap \Ext X$ is non-empty as otherwise $S_{\tau,u_n} = \operatorname{conv}(\emptyset) = \emptyset$.
\end{proof}

In \cite{OKMBO}, Van Gennip considers a mass-conserving MBO scheme for minimising the Ohta--Kawasaki functional with a modified graph diffusion, which in the $\gamma =0$ special case reduces to ordinary graph diffusion and hence is the same problem as \eqref{mMBO}. We here repeat his form for the solutions to \eqref{mMBO} lying at extreme points. 
\begin{thm} \label{mMBOcondition}Let $u_{n+1}\in S_{\tau,u_n}\cap\Ext X$.  Then write \[ E:=\{i\in V \,|\,(u_{n+1})_i = 1\}, \:  F:=\{i\in V \,|\,(u_{n+1})_i = 0\}\] Then for each $i\in V\setminus F$,  $j\in V\setminus E$ we have $(e^{-\tau\Delta}u_n)_i\geq (e^{-\tau\Delta}u_n)_j$.\end{thm}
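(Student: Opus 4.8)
The plan is a standard exchange (mass-swap) argument: assume the inequality fails and produce a feasible competitor with strictly larger objective value, contradicting $u_{n+1}\in S_{\tau,u_n}$. So suppose, for contradiction, that there exist $i\in V\setminus F$ and $j\in V\setminus E$ with $(e^{-\tau\Delta}u_n)_i < (e^{-\tau\Delta}u_n)_j$. First observe that necessarily $i\neq j$, since if $i=j$ the two diffused values would trivially coincide. Since $i\notin F$ we have $(u_{n+1})_i>0$, and since $j\notin E$ we have $(u_{n+1})_j<1$.

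Next I would introduce, for small $\epsilon>0$, the competitor
\[ v := u_{n+1} - \epsilon d_i^{-r}\chi_{\{i\}} + \epsilon d_j^{-r}\chi_{\{j\}}. \]
The two feasibility checks are: (i) $\mathcal{M}(v) = \mathcal{M}(u_{n+1}) = M$, because the two perturbed coordinates contribute $-\epsilon d_i^{-r}d_i^r = -\epsilon$ and $+\epsilon d_j^{-r}d_j^r = +\epsilon$ to the mass, which cancel (this is exactly why the coefficients $d_i^{-r},d_j^{-r}$ are used); and (ii) $v\in\V_{[0,1]}$ for $\epsilon$ sufficiently small, using $(u_{n+1})_i>0$ and $(u_{n+1})_j<1$ (and $i\neq j$). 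Hence $v\in X$.

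Then I would compute the change in the objective directly:
\[ \left\langle v, e^{-\tau\Delta}u_n\right\rangle_\V - \left\langle u_{n+1}, e^{-\tau\Delta}u_n\right\rangle_\V = -\epsilon\,(e^{-\tau\Delta}u_n)_i + \epsilon\,(e^{-\tau\Delta}u_n)_j = \epsilon\left( (e^{-\tau\Delta}u_n)_j - (e^{-\tau\Delta}u_n)_i \right) > 0, \]
which contradicts $u_{n+1}\in S_{\tau,u_n}$, i.e.\ that $u_{n+1}$ maximises $\langle\,\cdot\,, e^{-\tau\Delta}u_n\rangle_\V$ over $X$. This gives the claim. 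I would also remark that the argument uses only $u_{n+1}\in S_{\tau,u_n}$; the hypothesis $u_{n+1}\in\Ext X$ enters only through Proposition \ref{extprop}, which guarantees that $E$ and $F$ together exhaust all but at most one vertex, so that the stated inequality is a statement about essentially all pairs $i,j$.

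I do not expect a real obstacle here: this is a textbook improving-swap argument. The only points needing a little care are getting the perturbation weights right so that $v$ lies exactly on $S_M$ (the weighted inner product forces the $d_i^{-r},d_j^{-r}$ normalisation), checking $v\in\V_{[0,1]}$ for small $\epsilon$, and noting that the contradiction hypothesis rules out $i=j$.
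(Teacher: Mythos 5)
Your proposal is correct and is essentially the paper's own argument: both use the competitor $u_{n+1} - \delta d_i^{-r}\chi_{\{i\}} + \delta d_j^{-r}\chi_{\{j\}}$, check it lies in $X$ for small $\delta$, and compare objective values (the paper states this directly as an inequality, you phrase it as a contradiction, which is an immaterial difference). Your observation that the $\Ext X$ hypothesis is not needed for the swap itself is also consistent with the paper, which uses Proposition \ref{extprop} only to make the admissible range of $\delta$ explicit.
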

\begin{proof} By Proposition \ref{extprop} we have that $u_{n+1} = \chi_E + \theta \chi_{V\setminus(E\cup F)}$ where $\theta\in(0,1)$ and $V\setminus(E\cup F)$ has at most one element which we will denote $i^*$ (when it exists). Now choose some  $0 <\delta < \min_{i\in V}\{d_i^r,d_{i^*}^r\theta,d_{i^*}^r(1-\theta)\}$, and any $i\in V\setminus F$,  $j\in V\setminus E$.  Define \[ u := u_{n+1} - \delta d_i^{-r}\chi_{\{i\}} + \delta d_j^{-r}\chi_{\{j\}}\] where by choice of $\delta$ we ensure that $u \in X$. Therefore \[ 0\leq \ip{u_{n+1}-u,e^{-\tau\Delta}u_n} = \delta((e^{-\tau\Delta}u_n)_i - (e^{-\tau\Delta}u_n)_j)\] and so $(e^{-\tau\Delta}u_n)_i\geq (e^{-\tau\Delta}u_n)_j$ as desired.
\end{proof}

\subsubsection{Uniqueness conditions for the mass-conserving MBO scheme} 
We consider when \eqref{mMBO} has a unique solution, and characterise all solutions to \eqref{mMBO}.

\begin{cor}\label{uniquecor}$S_{\tau,u_n}$ has one element if and only if $S_{\tau,u_n}\cap \Ext X $ has one element.\end{cor}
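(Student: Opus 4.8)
The plan is to read this off directly from Proposition~\ref{extSprop}, which already does all the structural work: it identifies $\Ext S_{\tau,u_n} = S_{\tau,u_n}\cap \Ext X$ and shows $S_{\tau,u_n} = \operatorname{conv}(S_{\tau,u_n}\cap \Ext X)$. Recall also from Corollary~\ref{abcor} (or directly from the Krein--Milman theorem applied to the nonempty compact convex set $S_{\tau,u_n}$) that $S_{\tau,u_n}\cap \Ext X$ is nonempty, so in both directions ``one element'' is the only alternative to ``more than one element''.

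For the forward implication, suppose $S_{\tau,u_n} = \{u\}$ for some single $u$. Any point is trivially an extreme point of the singleton containing only itself, so $\Ext S_{\tau,u_n} = \{u\}$, and hence $S_{\tau,u_n}\cap \Ext X = \Ext S_{\tau,u_n}$ has exactly one element. For the reverse implication, suppose $S_{\tau,u_n}\cap \Ext X = \{u\}$. Then by Proposition~\ref{extSprop},
\[
S_{\tau,u_n} = \operatorname{conv}(S_{\tau,u_n}\cap \Ext X) = \operatorname{conv}(\{u\}) = \{u\},
\]
which has one element. This establishes the equivalence.

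I do not expect a genuine obstacle here, since the corollary is essentially a repackaging of Proposition~\ref{extSprop}; the only point requiring a moment's care is ensuring $S_{\tau,u_n}\cap \Ext X \neq \emptyset$ so that the phrase ``has one element'' is meaningful and the two singleton statements genuinely correspond, and this is immediate from compactness and convexity of $S_{\tau,u_n}$ via Krein--Milman (already invoked in the proof of Proposition~\ref{extSprop} and restated in Corollary~\ref{abcor}).
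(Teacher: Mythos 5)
Your proof is correct and follows essentially the same route as the paper: both directions rest on Proposition~\ref{extSprop} together with the non-emptiness of $S_{\tau,u_n}\cap \Ext X$ obtained from Krein--Milman (the paper's forward direction just notes that a non-empty subset of a singleton is that singleton, which is a minor rephrasing of your observation about extreme points of a singleton). No gaps.
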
 
\begin{proof}
	As $S_{\tau,u_n}$ is non-empty, $S_{\tau,u_n}\cap \Ext X$ is non-empty as else $S_{\tau,u_n} = \operatorname{conv}(\emptyset) = \emptyset$.
	Thus, if $S_{\tau,u_n}=\{u\}$ then $S_{\tau,u_n}\cap \Ext X =\{u\}$ as this is the only non-empty subset of $S_{\tau,u_n}$. 
	Conversely, if $S_{\tau,u_n}\cap \Ext X =\{u\}$ then by Proposition \ref{extSprop} $S_{\tau,u_n} =  \operatorname{conv}(\{u\}) = \{u\}$.
\end{proof}
Usefully, Theorem \ref{mMBOcondition}  gives a necessary condition for $u\in S_{\tau,u_n}\cap \Ext X $. We demonstrate the following sufficient condition for uniqueness of solutions.
\begin{thm}
	Define the condition \be\label{diffusedcondition} \forall i, j\in V, \:\:\:\: i\neq j \Rightarrow (e^{-\tau\Delta}u_n)_i \neq  (e^{-\tau\Delta}u_n)_j. \ee
	Then if \eqref{diffusedcondition} holds, $S_{\tau,u_n}$ has a unique element \emph{(}i.e. \eqref{mMBO} has a unique solution\emph{)}.
\end{thm}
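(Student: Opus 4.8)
The plan is to reduce the claim, via Corollary \ref{uniquecor}, to showing that $S_{\tau,u_n}\cap\Ext X$ is a singleton, and then to pin that single extreme point down completely from the strict ordering of the diffused values $(e^{-\tau\Delta}u_n)_i$ together with the mass constraint $\mathcal{M}(\cdot)=M$. First I would record non-emptiness: $S_{\tau,u_n}$ is non-empty (Definition \ref{Sdef}), so by Proposition \ref{extSprop}, $S_{\tau,u_n}=\operatorname{conv}(S_{\tau,u_n}\cap\Ext X)$, which forces $S_{\tau,u_n}\cap\Ext X\neq\emptyset$. Hence it suffices to prove this intersection has \emph{at most} one element.

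So take any $u_{n+1}\in S_{\tau,u_n}\cap\Ext X$ and set $E:=\{i\in V\mid (u_{n+1})_i=1\}$, $F:=\{i\in V\mid (u_{n+1})_i=0\}$. By Proposition \ref{extprop}, $V\setminus(E\cup F)$ is empty or a single vertex $i^*$, with $(u_{n+1})_{i^*}=:\theta\in(0,1)$ in the latter case. Now apply Theorem \ref{mMBOcondition}: for every $i\in V\setminus F$ and $j\in V\setminus E$, $(e^{-\tau\Delta}u_n)_i\geq (e^{-\tau\Delta}u_n)_j$. Since $V\setminus F=E\cup\{i^*\}$ and $V\setminus E=F\cup\{i^*\}$ (modified obviously when $i^*$ is absent), and since \eqref{diffusedcondition} makes the values pairwise distinct, this inequality is \emph{strict} whenever $i\neq j$. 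Therefore every vertex of $E$ has a strictly larger diffused value than $i^*$, which in turn has a strictly larger value than every vertex of $F$. Writing $V=\{\pi(1),\dots,\pi(N)\}$ with $N:=|V|$ and $(e^{-\tau\Delta}u_n)_{\pi(1)}>\dots>(e^{-\tau\Delta}u_n)_{\pi(N)}$ --- a genuine total order by \eqref{diffusedcondition} --- we conclude $E=\{\pi(1),\dots,\pi(|E|)\}$, $i^*=\pi(|E|+1)$ when the fractional vertex exists, and $F$ is the remaining tail. Thus $u_{n+1}$ is determined by the single integer $|E|$ and (when relevant) the scalar $\theta$.

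It then remains to fix $|E|$ and $\theta$ from the mass constraint. Because $G$ is connected, $d_i>0$ and hence $d_i^r>0$ for all $i$, so the partial sums $s_m:=\sum_{l=1}^m d_{\pi(l)}^r$ are strictly increasing in $m$ with $s_0=0$ and $s_N=\mathcal{M}(\mathbf{1})$. From $M=\mathcal{M}(u_{n+1})=s_{|E|}+\theta\,d_{\pi(|E|+1)}^r$ (the last term being $0$ when there is no fractional vertex) and $0\leq M\leq\mathcal{M}(\mathbf{1})$, strict monotonicity of the $s_m$ gives a unique outcome: either $M=s_m$ for a unique $m$, whence $|E|=m$, no fractional vertex, and $u_{n+1}=\chi_{\{\pi(1),\dots,\pi(m)\}}$; or $s_m<M<s_{m+1}$ for a unique $m$, whence $|E|=m$, $i^*=\pi(m+1)$, and $\theta=(M-s_m)/d_{\pi(m+1)}^r\in(0,1)$. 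Either way $u_{n+1}$ is uniquely determined, so $S_{\tau,u_n}\cap\Ext X$ is a singleton, and Corollary \ref{uniquecor} yields that $S_{\tau,u_n}$ itself is a singleton, i.e. \eqref{mMBO} has a unique solution.

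The hard part --- really the only delicate point --- is the bookkeeping in the last step: ruling out extreme points whose $E$ has the ``wrong'' size, in particular those that would need $\theta$ pushed to the boundary value $0$ or $1$, which is precisely where strict positivity of the degree weights $d_i^r$ and the distinctness hypothesis \eqref{diffusedcondition} are both essential. The rest is a direct assembly of Theorem \ref{mMBOcondition}, Proposition \ref{extprop}, and Proposition \ref{extSprop}.
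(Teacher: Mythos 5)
Your proposal is correct and follows essentially the same route as the paper: both use Theorem \ref{mMBOcondition} together with Proposition \ref{extprop} to show any extreme solution is a "sorted" vector in the diffused-value ordering, then use strict monotonicity of the partial sums of the $d_i^r$ and the mass constraint to pin down the cut position and the fractional value uniquely, and finally invoke Corollary \ref{uniquecor}. The only cosmetic difference is that the paper packages the binary and fractional cases into a single parametrisation $(a,\theta)$ with $\theta\in(0,1]$, whereas you treat them as two explicit cases.
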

\begin{proof}
	WLOG, up to relabelling of $V$, we may write \eqref{diffusedcondition} as \[i<j \Leftrightarrow (e^{-\tau\Delta}u_n)_i < (e^{-\tau\Delta}u_n)_j.\]
	Let $u\in S\cap \Ext X$. By Theorem \ref{mMBOcondition} we thus have \[i<j\Rightarrow u_i = 0 \text{ or } u_j = 1\] and hence by Proposition \ref{extprop} $u$ must have the form \[u = \big(\underbrace{0,0,...,0}_{a-1},\theta,\underbrace{1,1,...,1}_{|V|-a}\big)\] where $\theta\in (0,1]$ so $(a,\theta)$ uniquely determines any element of $S_{\tau,u_n}\cap\Ext X$. Let \[ \mathcal{M}(a,\theta) := \mathcal{M}(u)\text{ for $u$ defined by $(a,\theta)$ as above.}\] 
	Then for $a<b$, \[  \mathcal{M}(a,\theta)-\mathcal{M}(b,\phi) = \theta d_a^r + \sum_{a<i<b} d_i^r + (1-\phi) d_{b}^r > 0\] and clearly $\mathcal{M}(a,\theta) = \mathcal{M}(a,\phi)$ if and only if $\theta = \phi$. If $u\in S_{\tau,u_n}\cap \Ext X$, $\mathcal{M}(u) = M$, and by the above we have that $\mathcal{M}(a,\theta) = M$ for a unique $(a,\theta)$. Thus $S_{\tau,u_n}\cap \Ext X$ has a unique element (as by the proof of Corollary \ref{abcor} $S_{\tau,u_n}\cap \Ext X$ is non-empty), so by Corollary \ref{uniquecor} $S_{\tau,u_n}$ has a unique element.
\end{proof}
%\begin{nb}
%	If \eqref{diffusedcondition} does not hold, then non-unique solutions are possible. In particular, if $u$ is a solution such that $u_i \neq u_j$, $d_i^r = d_j^r$ and $(e^{-\tau\Delta}u_n)_i = (e^{-\tau\Delta}u_n)_j$, then the $v\in X\setminus\{u\}$ obtained from switching the values of $u$ at $i$ and $j$ is also a solution. 
%\end{nb}
Following this idea, we get a characterisation of $S_{\tau,u_n}$ and a necessary and sufficient condition for uniqueness.
\begin{thm}\label{lamb1soln}
	Suppose $u_n\in\V_{[0,1]}$ and $M=\mathcal{M}(u_n)>0$, then there is a unique $k$ such that $1\leq k\leq K$ and \[\sum_{l=k+1}^K a_{u_n,\tau,\alpha_l} < M \leq  \sum_{l=k}^K a_{u_n,\tau,\alpha_l}\] recalling $K$ and $a_{u,\tau,\alpha}$ from Definition \ref{Adef}. Then $u\in S_{\tau,u_n}$ if and only if $u\in X$ and 
	\begin{subequations}\label{mMBOsoln}
		\begin{align}
		&u_i = 0, \text { if } (e^{-\tau\Delta}u_n)_i<\alpha_k,\\
		&u_i = 1, \text { if } (e^{-\tau\Delta}u_n)_i>\alpha_k,\\
		&M - \sum_{l=k+1}^K a_{u_n,\tau,\alpha_l} = \sum_{(e^{-\tau\Delta}u_n)_i=\alpha_k} d_i^r u_i.
	\end{align}
	\end{subequations}
	 Therefore $S_{\tau,u_n}$ has a unique element if and only if \begin{equation}\label{uniquecon}
		M =  \sum_{l=k}^K a_{u_n,\tau,\alpha_l}\:\:\:\text{ or } \:\:\:\exists !i\in V, \:(e^{-\tau\Delta}u_n)_i = \alpha_k .
	\end{equation}  
\end{thm}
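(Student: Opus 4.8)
The plan is to bypass the extreme-point machinery of Propositions \ref{extprop}--\ref{extSprop} and instead solve \eqref{mMBO} head-on as a finite linear program, then read off the uniqueness condition from the description of the solution set. Write $v := e^{-\tau\Delta}u_n$ and abbreviate $a_l := a_{u_n,\tau,\alpha_l}$. The first step is to observe that the level sets $\{i\in V : v_i = \alpha_l\}$, $l = 1,\dots,K$, partition $V$, so $\sum_{l=1}^K a_l = \langle\mathbf{1},\mathbf{1}\rangle_\V = \mathcal{M}(\mathbf{1})$, with each $a_l > 0$ (a sum of positive numbers $d_i^r$). Hence the tail sums $S_k := \sum_{l=k}^K a_l$ are strictly decreasing in $k$, with $S_{K+1} = 0 < M$ (the hypothesis $M > 0$) and $M = \mathcal{M}(u_n) \le \mathcal{M}(\mathbf{1}) = S_1$ (since $u_n \in \V_{[0,1]}$). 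Therefore there is exactly one $k \in \{1,\dots,K\}$ with $S_{k+1} < M \le S_k$, which is the $k$ in the statement.

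Next I would reduce to ``level masses''. For $u \in X$ put $m_l := \sum_{v_i = \alpha_l} d_i^r u_i$; then $m_l \in [0, a_l]$, $\sum_l m_l = \mathcal{M}(u) = M$, and $\langle u, v\rangle_\V = \sum_{l=1}^K \alpha_l m_l$; conversely, every tuple $(m_l)$ with $m_l \in [0,a_l]$ and $\sum_l m_l = M$ is realised by some $u \in X$ (fill each level's vertices greedily). So \eqref{mMBO} is equivalent to maximising $\sum_l \alpha_l m_l$ over this polytope, and this I would settle by an exchange argument: if an optimal $(m_l)$ (it exists by compactness) had $m_l > 0$ and $m_{l'} < a_{l'}$ for some $l < l'$, transferring $\delta := \min(m_l, a_{l'} - m_{l'}) > 0$ from level $l$ to level $l'$ would raise the objective by $\delta(\alpha_{l'} - \alpha_l) > 0$, a contradiction. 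Hence an optimal $(m_l)$ is ``filled from the top'': with $k' := \min\{l : m_l > 0\}$ one gets $m_l = a_l$ for $l > k'$, $m_l = 0$ for $l < k'$, and $m_{k'} = M - S_{k'+1} \in (0, a_{k'}]$; the bracketing $S_{k+1} < M \le S_k$ then forces $k' = k$. Un-aggregating: $m_l = a_l$ forces $u_i = 1$ on level $l$ (equality in $\sum d_i^r u_i = \sum d_i^r$ with $u_i \le 1$), $m_l = 0$ forces $u_i = 0$, and on level $k$ any admissible split of $m_k = M - S_{k+1}$ over $\{i : v_i = \alpha_k\}$ is optimal — so $S_{\tau,u_n}$ is precisely the set \eqref{mMBOsoln}, with common optimal value $\sum_{l>k}\alpha_l a_l + \alpha_k(M - S_{k+1})$.

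For uniqueness, I would note that the only freedom left in a solution is the distribution of the mass $M - S_{k+1}$ over $\{i : v_i = \alpha_k\}$. If that index set is a singleton $\{i\}$, then $u_i = (M - S_{k+1})/d_i^r$ is pinned down; if $M = S_k$, then $m_k = a_k$ forces $u_i = 1$ for every $i$ with $v_i = \alpha_k$; either way the solution is unique. Conversely, if $M < S_k$ (so $0 < M - S_{k+1} < a_k$) and there are $\ge 2$ vertices $i \ne j$ with $v_i = v_j = \alpha_k$, then the level-$k$ values can be neither all $0$ nor all $1$, so one may pick distinct $i,j$ with $u_i < 1$ and $u_j > 0$ and perturb $(u_i, u_j) \mapsto (u_i + \delta d_i^{-r}, u_j - \delta d_j^{-r})$ for small $\delta > 0$, obtaining a genuinely different element of $S_{\tau,u_n}$. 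Hence condition \eqref{uniquecon} is necessary and sufficient.

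The step I expect to be the main obstacle is the ``un-aggregation'' bookkeeping — verifying carefully that passing to the level-mass variables $m_l$ neither loses nor invents maximisers, and that the filled-from-the-top index genuinely coincides with the $k$ of the statement — together with the small feasibility check (staying in $[0,1]^2$) in the converse of the uniqueness claim; everything else is elementary. An alternative would be to combine Theorem \ref{mMBOcondition} with the Krein--Milman description in Proposition \ref{extSprop}, but the linear-programming argument above produces the entire solution set in one pass and seems cleaner.
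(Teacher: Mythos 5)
Your proof is correct, and while it shares the paper's central reduction, it resolves the key step by a genuinely different and more self-contained argument. Both you and the paper aggregate the problem over the level sets of $e^{-\tau\Delta}u_n$ — your level masses $m_l$ are, up to the normalisation $\tilde u_l = m_l/a_{u_n,\tau,\alpha_l}$, exactly the paper's aggregated variables — and both establish the existence and uniqueness of $k$ by the same monotone-tail-sum observation. The divergence is in how the aggregated problem is solved: the paper notes that the aggregated data satisfy the separation condition \eqref{diffusedcondition} and then invokes the preceding uniqueness theorem \emph{mutatis mutandis}, which in turn rests on the extreme-point machinery (Proposition \ref{extprop}, the face property, Krein--Milman via Proposition \ref{extSprop}, and the ordering condition of Theorem \ref{mMBOcondition}); you instead settle the aggregated linear program directly by a mass-transfer exchange argument, which needs none of that apparatus and delivers the full solution set (not just its extreme points) in one pass. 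Your treatment of uniqueness is likewise a direct perturbation rather than the paper's appeal to Corollary \ref{uniquecor}. The one point you assert without proof is that, in the non-uniqueness direction, one can choose \emph{distinct} $i\ne j$ in the level set of $\alpha_k$ with $u_i<1$ and $u_j>0$: if no such distinct pair existed, the sets $\{i : u_i<1\}$ and $\{j : u_j>0\}$ within that level would coincide in a single vertex, forcing every other vertex of the level to satisfy $u=1$ and $u=0$ simultaneously — impossible when the level has at least two vertices. With that one-line patch the argument is complete; each approach buys something, the paper's reusing structural results it needs elsewhere anyway, yours giving a shorter and more elementary standalone proof of this particular theorem.
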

\begin{proof}
First, we show that $k$ exists and is unique. Let $B_r:=\sum_{l=r}^K a_{u_n,\tau,\alpha_l}$. Then  as $a_{u_n,\tau,\alpha_l}>0$ the $B_r$ are strictly decreasing in $r$ and we observe that $B_1=\mathcal{M}(\mathbf{1})\geq M$ and $B_{K+1}=0<M$. Hence there exists a unique $k\in\{1,...,K\}$ such that $B_{k+1} < M \leq B_k$.

	Next, for $v\in \V$, define $\tilde v:\{1,...,K\} \rightarrow\mathbb{R}$ by \[\tilde v_l := a^{-1}_{u_n,\tau,\alpha_l}\sum_{i:(e^{-\tau\Delta}u_n)_i=\alpha_l} d_i^r v_i\] and define the inner product \[\langle\tilde v,\tilde w\rangle_\alpha :=\sum_{l=1}^K a_{u_n,\tau,\alpha_l} \tilde v_l\tilde w_l.\] 
	Then note by a simple calculation we have that \[\langle\tilde v,\mathbf{1}\rangle_\alpha = \mathcal{M}(v)\] and \[\langle\tilde v,\widetilde{ e^{-\tau\Delta}u_n}\rangle_\alpha = \ip{v, e^{-\tau\Delta}u_n}.\] Hence, defining $\tilde X = \{\tilde v|v\in X\}$, we have that $u\in S_{\tau,u_n}$ if and only if  
		\[\tilde u \in \underset{\tilde v\in \tilde X}{ \argmax }  \: \left\langle \tilde v,\widetilde{ e^{-\tau\Delta}u_n}\right\rangle_\alpha\]
	and note that \eqref{diffusedcondition} holds true of $\widetilde{ e^{-\tau\Delta}u_n}$ (i.e. $(\widetilde{ e^{-\tau\Delta}u_n})_l\neq (\widetilde{ e^{-\tau\Delta}u_n})_r$ for all $l\neq r \in \{1,2,...,K\}$). Therefore by the same argument as in the proof of the previous theorem \emph{mutatis mutandis} (i.e. replacing instances of $\ip{\cdot,\cdot}$ with $\langle\cdot,\cdot \rangle_\alpha$, of $d_i^r$ with $a_{u_n,\tau,\alpha_l}$ etc.) there is a unique such $\tilde u$ of the form 
	\[\tilde u = \big(\underbrace{0,0,...,0}_{b-1},\theta,\underbrace{1,1,...,1}_{K-b}\big)\] where $\theta\in (0,1]$. Then we have \[ M = \langle\tilde u,\mathbf{1}\rangle_\alpha = \theta a_{u_n,\tau,\alpha_b} + \sum_{l=b+1}^K a_{u_n,\tau,\alpha_l}\] so we must have $b = k$ and \[\theta = a^{-1}_{u_n,\tau,\alpha_k}\left(M - \sum_{l=k+1}^K a_{u_n,\tau,\alpha_l}\right).\]
	 Taking $l<k$, \[0 =\tilde u_l = a^{-1}_{u_n,\tau,\alpha_l}\sum_{i:(e^{-\tau\Delta}u_n)_i=\alpha_l} d_i^r u_i\] and so $u_i = 0$ if $ (e^{-\tau\Delta}u_n)_i<\alpha_k$, and taking $l>k$ \[1 =\tilde u_l = a^{-1}_{u_n,\tau,\alpha_l}\sum_{i:(e^{-\tau\Delta}u_n)_i=\alpha_l} d_i^r u_i\] and so $u_i = 1$ if $ (e^{-\tau\Delta}u_n)_i>\alpha_k$. Finally taking $l=k$ we get the equivalences \[\begin{split} u\in S_{\tau,u_n}&\text{ if and only if } \tilde u \in \underset{\tilde v\in \tilde X}{ \argmax }  \: \left\langle \tilde v,\widetilde{ e^{-\tau\Delta}u_n}\right\rangle_\alpha \\
&\text{ if and only if }\begin{cases}
		u_i = 0, \text { if } (e^{-\tau\Delta}u_n)_i<\alpha_k,\\
		u_i = 1, \text { if } (e^{-\tau\Delta}u_n)_i>\alpha_k,\\
		\theta = a^{-1}_{u_n,\tau,\alpha_k}\sum_{(e^{-\tau\Delta}u_n)_i=\alpha_k} d_i^r u_i.
	\end{cases} \end{split}\]	
	Hence we have a unique solution if and only if $(e^{-\tau\Delta}u_n)_i = \alpha_k$ at a unique $i\in V$ or $\theta =1$ (and therefore $u_i=1$ for $(e^{-\tau\Delta}u_n)_i = \alpha_k$),  i.e. when \eqref{uniquecon} holds.
	\end{proof}
\begin{nb}
If $M=0$ then $X=\{\mathbf{0}\}$, so uniqueness is trivial, hence supposing that $M>0$ incurs no loss of generality.
\end{nb}
\begin{nb}
 The solution in \eqref{mMBOsoln}, with an adjustible threshold level \emph{(}i.e. $\alpha_k$\emph{)} to ensure that mass is conserved, accords with the definition of the mass-conserving graph MBO scheme in \emph{\cite{OKMBO}} and with the definition of the mass-conserving continuum MBO scheme in \emph{\cite{RW}}.
\end{nb}
\subsection{The non-MBO case: $0\leq \lambda <1$}\label{s22}

To solve \eqref{mACobsMM} for $0\leq \lambda<1$, we use duality. Let $M:=\mathcal{M}(u_n)$ and define the functions \begin{align}
	&f_i(u) :=-d_i^r u_i, & g_i(u) := (u_i -1)d_i^r,& &h(u) := 2(\mathcal{M}(u) - M).&
\end{align} Then \eqref{mACobsMM} can be written as the primal problem: \[ \underset{u\in \V}{ \min }  \: (1-\lambda)  \left|\left|u\right|\right|^2_\V-2\ip{u,e^{-\tau\Delta}u_n}\: \text{ s.t. }f_i(u)\leq 0,\:g_i(u)\leq 0, \text{ and }h(u)=0.\] 
Hence for $\xi,\mu\in\V$ and $\nu\in\mathbb{R}$ dual variables, \eqref{mACobsMM} has Lagrangian: \be\label{Lagrangian}\begin{split} %\hspace*{-1.17em} 
&L(u,\xi,\mu,\nu):= (1-\lambda)  \left|\left|u\right|\right|^2_\V-2\ip{u,e^{-\tau\Delta}u_n}+\sum_i\left(\xi_i f_i(u) +\mu_i g_i(u)\right) +\nu h(u)\\
&=(1-\lambda)  \left|\left|u\right|\right|^2_\V-2\ip{u,e^{-\tau\Delta}u_n} +\ip{u,\mu-\xi} +\ip{2\nu u-\mu,\mathbf{1}}-2\nu M.
\end{split}\ee
We can rewrite this by making the following definition:
\be \label{ustar} u^*(\xi,\mu,\nu) := \frac{1}{2(1-\lambda)}\left( 2e^{-\tau\Delta}u_n + \xi-\mu -2\nu\mathbf{1}\right) \ee
so that 
\[\begin{split} L(u,\xi,\mu,\nu)
&=(1-\lambda)  \left|\left|u\right|\right|^2_\V-2(1-\lambda)\left\langle u, u^*(\xi,\mu,\nu)\right\rangle_\V - \ip{\mu,\mathbf{1}} - 2\nu M\\
&=(1-\lambda)  \left|\left|u-u^*(\xi,\mu,\nu)\right|\right|^2_\V-(1-\lambda)||u^*(\xi,\mu,\nu)||^2_V - \ip{\mu,\mathbf{1}} - 2\nu M\\
\end{split}\]
which we note is strictly convex, proper, and bounded below in $u$ (for fixed $\xi,\mu$, and $\nu$). 
Next, we define the dual objective function: \be G(\xi,\mu,\nu):=\inf_{u\in \V}  L(u,\xi,\mu,\nu) = L(u^*(\xi,\mu,\nu),\xi,\mu,\nu).\ee 
and therefore 
\begin{equation}\label{gform}
	G(\xi,\mu,\nu) = - \left( (1-\lambda)\left|\left|u^*(\xi,\mu,\nu)\right|\right|^2_\V +\ip{\mu,\mathbf{1}}+2\nu M \right).
\end{equation}
The dual problem to \eqref{mACobsMM} is given by \be \label{dual} \sup_{\xi\geq 0,\mu\geq 0,\nu} G(\xi,\mu,\nu). \ee
\begin{lem}\label{sduality} For  $u_n\in\V_{[0,1]}$, $M=\mathcal{M}(u_n)$, \eqref{mACobsMM} and \eqref{dual} have strong duality, i.e. 
\[  \sup_{\xi\geq 0,\mu\geq 0,\nu} G(\xi,\mu,\nu) = \underset{u\in X}{ \min }  \: (1-\lambda)  \left|\left|u\right|\right|^2_\V-2\ip{u,e^{-\tau\Delta}u_n}\] 
and if $\xi^*,\mu^*%\geq0
$ and $\nu^*$ optimise \eqref{dual}, then $u^*(\xi^*,\mu^*,\nu^*)\in X$ as in \eqref{ustar} optimises \eqref{mACobsMM}.
 \end{lem}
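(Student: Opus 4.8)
The plan is to treat \eqref{mACobsMM} as a convex program with a strictly convex quadratic objective (using $1-\lambda>0$) and only affine constraints, and to extract Karush--Kuhn--Tucker multipliers from the primal optimum. First I would record two preliminary facts. (i) The primal problem is feasible since $u_n\in X$, and its optimum is attained at a \emph{unique} point $\hat u\in X$: the feasible set $X$ is compact (as noted after its definition) and the objective is continuous and strictly convex. Write $p^*$ for the primal optimal value. (ii) Weak duality holds: for any dual-feasible $(\xi,\mu,\nu)$ (i.e.\ $\xi\ge0$, $\mu\ge0$),
\[ G(\xi,\mu,\nu)=\inf_{u\in\V}L(u,\xi,\mu,\nu)\le L(\hat u,\xi,\mu,\nu)\le (1-\lambda)\|\hat u\|_\V^2-2\ip{\hat u,e^{-\tau\Delta}u_n}=p^*, \]
where the last inequality uses $f_i(\hat u)\le0$, $g_i(\hat u)\le0$, $h(\hat u)=0$ and $\xi,\mu\ge0$.

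Next I would construct a dual optimiser from $\hat u$. Because all of $f_i$, $g_i$, $h$ are affine, the affine constraint qualification holds, so by the standard KKT theorem for convex programs there exist $\xi^*\ge0$, $\mu^*\ge0$ and $\nu^*\in\mathbb{R}$ with $\nabla_u L(\hat u,\xi^*,\mu^*,\nu^*)=0$ and complementary slackness $\xi_i^*f_i(\hat u)=\mu_i^*g_i(\hat u)=0$ for all $i\in V$. From the completed-square form of $L$ obtained above (in the derivation of \eqref{gform}), the stationarity condition is precisely $\hat u=u^*(\xi^*,\mu^*,\nu^*)$; since $L(\,\cdot\,,\xi^*,\mu^*,\nu^*)$ is strictly convex this is its unique minimiser, so $G(\xi^*,\mu^*,\nu^*)=L(\hat u,\xi^*,\mu^*,\nu^*)$, and complementary slackness together with $h(\hat u)=0$ reduces this to $G(\xi^*,\mu^*,\nu^*)=p^*$. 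With (ii) this gives $\sup_{\xi\ge0,\mu\ge0,\nu}G=p^*$, i.e.\ strong duality, and shows $(\xi^*,\mu^*,\nu^*)$ solves \eqref{dual}.

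Finally I would show that \emph{every} optimiser $(\hat\xi,\hat\mu,\hat\nu)$ of \eqref{dual} produces a primal optimiser via \eqref{ustar}. Strong duality gives $G(\hat\xi,\hat\mu,\hat\nu)=p^*$, and by the completed-square form $G(\hat\xi,\hat\mu,\hat\nu)=L\big(u^*(\hat\xi,\hat\mu,\hat\nu),\hat\xi,\hat\mu,\hat\nu\big)=\min_{u\in\V}L(u,\hat\xi,\hat\mu,\hat\nu)$. Combining this with the weak-duality chain in (ii) applied to $\hat u$ yields $p^*=\min_uL(u,\hat\xi,\hat\mu,\hat\nu)\le L(\hat u,\hat\xi,\hat\mu,\hat\nu)\le p^*$, so $\hat u$ also attains the minimum of the strictly convex map $L(\,\cdot\,,\hat\xi,\hat\mu,\hat\nu)$. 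By uniqueness of that minimiser, $u^*(\hat\xi,\hat\mu,\hat\nu)=\hat u\in X$, which is the primal optimiser, as required.

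The main obstacle I anticipate is the rigorous justification of strong duality and dual attainment: one must invoke the \emph{affine} constraint qualification (no Slater-type strictly feasible point is needed, which is fortunate since the equality constraint $h(u)=0$ already confines $u$ to the hyperplane $S_M$), and the hypothesis $0\le\lambda<1$ must be used essentially --- both to put $L$ in a completed-square form whose unique $u$-minimiser is $u^*(\xi,\mu,\nu)$, and to make the primal minimiser unique, which is exactly what upgrades ``some'' dual optimiser in the KKT step to ``any'' dual optimiser in the statement.
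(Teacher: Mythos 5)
Your proposal is correct and follows essentially the same route as the paper: both arguments rest on the affineness of the constraints to dispense with a strictly feasible point (the paper cites the refined Slater condition from \cite[\S 5.2.3]{convexbook}, while you reach the same conclusion by extracting KKT multipliers at the primal optimum, which has the minor bonus of making dual attainment explicit), and your final step is identical to the paper's --- collapse the weak-duality chain $G(\xi^*,\mu^*,\nu^*)=\inf_u L\le L(\tilde u,\cdot)\le f_0(\tilde u)$ into equalities and use strict convexity of $L(\cdot,\xi^*,\mu^*,\nu^*)$ (which requires $\lambda<1$, as you note) to conclude $u^*(\xi^*,\mu^*,\nu^*)=\tilde u$.
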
 
\begin{proof}
We apply Slater's condition for strong duality (see \cite[§5.2.3]{convexbook}). As the $f_i$ and $g_i$ are affine on $\V$ and $\V$ is open and affine, Slater's condition is satisfied if $\exists u\in\V$ with $f_i(u)\leq 0,g_i(u)\leq 0$ and $h(u) = 0$, i.e. if $\exists u\in X$. As $u_n\in X$ we thus have strong duality. 

Now let $\xi^*\geq 0,\mu^*\geq0$, and $\nu^*$ be optimal for \eqref{dual}, and let $\tilde u\in X$ be optimal for \eqref{mACobsMM}, which we know exists since $X$ is compact and the objective function is continuous. Writing $f_0(u) := (1-\lambda)  \left|\left|u\right|\right|^2_\V-2\ip{u,e^{-\tau\Delta}u_n}$ we have by strong duality: \[\begin{split}
	f_0(\tilde u) &= G(\xi^*,\mu^*,\nu^*)= L(u^*, \xi^*,\mu^*,\nu^*)= \inf_{u\in \V} L(u, \xi^*,\mu^*,\nu^*)\leq L(\tilde u, \xi^*,\mu^*,\nu^*)\leq f_0(\tilde u) 
\end{split} \] 
where the final inequality holds by \eqref{Lagrangian}, as $\tilde u\in X$ and so $f_i(\tilde u),g_i(\tilde u)\leq 0$ and $h(\tilde u) =0$.  So the inequalities are equalities and $L(u, \xi^*,\mu^*,\nu^*)$ is minimised at $\tilde u$. As $L$ is strictly convex in $u$ it has a unique minimiser, so $u^*(\xi^*,\mu^*,\nu^*)=\tilde u$ is optimal for \eqref{mACobsMM}.  
\end{proof}
%Next, let us consider the exact form of $g$. Writing \[v = (1-\lambda)u^*(\xi,\mu,\nu)= e^{-\tau\Delta}u_n + \frac{1}{2}\xi-\frac{1}{2}\mu -\frac{1}{2}\nu\mathbf{1}\] we have that \begin{equation*}
%	\begin{split}
%	(1-\lambda)g(\xi,\mu,\nu) &= \left|\left|v\right|\right|^2_\V-2\ip{v,e^{-\tau\Delta}u_n} +\ip{v,\mu-\xi} +\nu\ip{ v,\mathbf{1}}-(1-\lambda)\ip{\mu,\mathbf{1}}-(1-\lambda)\nu M 	\\
%	&= \ip{v,v - 2e^{-\tau\Delta}u_n +\mu -\xi +\nu\mathbf{1}} -(1-\lambda)\ip{\mu,\mathbf{1}}-(1-\lambda)\nu M \\
%	&= -\left|\left|v\right|\right|^2_\V -(1-\lambda)\ip{\mu,\mathbf{1}}-(1-\lambda)\nu M 
%	\end{split}
%\end{equation*}
%and therefore 
%\begin{equation}\label{gform}
%	g(\xi,\mu,\nu) = - \left[ (1-\lambda)\left|\left|u^*(\xi,\mu,\nu)\right|\right|^2_\V +\ip{\mu,\mathbf{1}}+\nu M \right]
%\end{equation}
%so the dual problem is equivalent to finding\begin{equation}
%	\label{dual2}
%	-\frac{1}{1-\lambda} \inf_{\xi\geq 0,\mu\geq 0,\nu} \frac{1}{4}\left|\left|2e^{-\tau\Delta}u_n + \xi-\mu -\nu\mathbf{1}\right|\right|^2_\V +(1-\lambda)\ip{\mu,\mathbf{1}}+(1-\lambda)\nu M .
%\end{equation}
By Lemma \ref{sduality} we have that $u^*:=u^*(\xi^*,\mu^*,\nu^*)\in X$ for $(\xi^*,\mu^*,\nu^*)$ dual optimal, and by applying complementary slackness we have that \begin{align*}
	&u_i^* > 0\Rightarrow \xi^*_i = 0, &\text{and}& &u_i^* < 1\Rightarrow \mu^*_i = 0,&\\
	&\xi_i^* > 0\Rightarrow u^*_i = 0, &\text{and}& &\mu_i^* >0 \Rightarrow u^*_i = 1.&
\end{align*}
Thus at each $i\in V$, $\xi_i^*=0$ or $\mu_i^* =0$. So we have the necessary conditions \begin{equation*}
	u_i^*=\begin{cases}
		0&\Rightarrow \:\:\:\mu^*_i = 0,\\%\:\:\:\:\:\:\:\:\:\:\:\:\:\nu^*\geq(e^{-\tau\Delta}u_n)_i \\
		\in(0,1)&\Rightarrow\: \:\:\xi_i^*=\mu^*_i = 0,\\%\:\:\:0<(e^{-\tau\Delta}u_n)_i - \nu^* < 1-\lambda\\
		1&\Rightarrow \:\:\:\xi_i^* = 0.%\:\:\:\:\:\:\:\:\:\:\:\:\:\:\nu^* \leq (e^{-\tau\Delta}u_n)_i - (1-\lambda)
	\end{cases}
\end{equation*} Then by substituting into \eqref{ustar}
\begin{equation*}
	\label{ustarsoln0}
	%\hspace*{-1.9em}
u_i^*=\begin{cases}
		0, &\text{if and only if} \:\:\mu^*_i = 0,\:\:\xi^*_i=2\nu^*-2(e^{-\tau\Delta}u_n)_i\geq0, \\
		\begin{aligned}\frac{(e^{-\tau\Delta}u_n)_i - \nu^*}{1-\lambda}\\\in(0,1),\end{aligned} &\text{if and only if} \:\:\xi_i^*=\mu^*_i = 0,\:\:0<(e^{-\tau\Delta}u_n)_i - \nu^* < 1-\lambda,\\
		1, &\text{if and only if} \:\:\xi_i^* = 0,\:\:\mu_i^*= 2(e^{-\tau\Delta}u_n)_i - 2(1-\lambda) - 2\nu^*\geq 0.
	\end{cases}
\end{equation*} 
We simplify by noting that the $\nu^*$ inequality conditions are disjoint and exhaustive, so we need only consider those conditions (to see this, note that if for example $\nu^*\geq (e^{-\tau\Delta}u_n)_i$ then each of the $u_i^*>0$ cases are ruled out, so $u_i^*$ must equal zero): \begin{equation}
	\label{ustarsoln}
	%\hspace*{-1.9em}
u_i^*=\begin{cases}
		0, &\text{if and only if} \:\: \nu^*-(e^{-\tau\Delta}u_n)_i\geq0, \\
		\frac{(e^{-\tau\Delta}u_n)_i - \nu^*}{1-\lambda}\in (0,1), &\text{if and only if} \:\: 0<(e^{-\tau\Delta}u_n)_i - \nu^* < 1-\lambda,\\
		1, &\text{if and only if} \:\: \nu^*\leq (e^{-\tau\Delta}u_n)_i - (1-\lambda).
	\end{cases}
\end{equation}  
But by the above lemma $u^*\in X$, so we have $\mathcal{M}(u^*) = M$. Thus $\nu=\nu^*$ is a solution to:
\begin{equation}\label{nusoln} 0 = M + \sum_i d_i^r \begin{cases}
	-1, & \nu \leq (e^{-\tau\Delta}u_n)_i -(1-\lambda),\\
	\frac{\nu-(e^{-\tau\Delta}u_n)_i}{1-\lambda}, & (e^{-\tau\Delta}u_n)_i-(1-\lambda)<\nu < (e^{-\tau\Delta}u_n)_i,\\
	0, &\nu\geq (e^{-\tau\Delta}u_n)_i,
\end{cases}\end{equation}
which exists by the Intermediate Value Theorem. By Definition \ref{Adef}, we rewrite \eqref{nusoln}% as:
\begin{equation}\label{nusoln2} M = \sum_{\alpha \in A_{u_n,\tau}} a_{u_n,\tau,\alpha} \begin{cases}
	1, & \nu \leq \alpha -(1-\lambda),\\
	\frac{\alpha-\nu}{1-\lambda}, & \alpha-(1-\lambda)<\nu < \alpha,\\
	0, &\nu\geq \alpha.
\end{cases}\end{equation}
\begin{nb} 
Although $u^*$ is unique, $\nu^*$ is not in general unique. For example if $A_{u_n,\tau}=\{0\}$ \emph{(}i.e. $u_n=\mathbf{0}$\emph{)} then any $\nu\geq 0$ solves \eqref{nusoln2}, but by the same token in that case any $\nu\geq 0$ gives $u^*=\mathbf{0}$.   
In general, the right hand side of \eqref{nusoln2} is constant in $\nu$ for $\nu\in [\alpha_k, \alpha_{k+1}-(1-\lambda)]$, where $\alpha_k,\alpha_{k+1}$ are consecutive elements in $A_{u_n,\tau}$.\end{nb}
\begin{prop}\label{lambdaprop}
Let $u_n\in\V_{[0,1]}$, $M=\mathcal{M}(u_n)$, and suppose $0<M<\ip{\mathbf{1},\mathbf{1}}$ and $\tau >0$. If $\nu$ solves \eqref{nusoln2}, then $\nu \in [\lambda\min A_{u_n,\tau},\lambda\max A_{u_n,\tau}]\subseteq(0,\lambda)$.
\end{prop}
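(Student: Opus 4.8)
The plan is to reduce the equation \eqref{nusoln2} to a sign condition on a single weighted sum, exploiting the fact that the target mass $M$ is itself an average of the diffused values. Since $e^{-\tau\Delta}$ is self-adjoint with $e^{-\tau\Delta}\mathbf{1}=\mathbf{1}$, diffusion conserves mass, so $M=\mathcal{M}(u_n)=\mathcal{M}(e^{-\tau\Delta}u_n)=\sum_{\alpha\in A_{u_n,\tau}}a_{u_n,\tau,\alpha}\,\alpha$, by grouping vertices according to the value of $e^{-\tau\Delta}u_n$. Writing $c_\alpha(\nu)$ for the bracketed coefficient of $a_{u_n,\tau,\alpha}$ appearing in \eqref{nusoln2}, namely $c_\alpha(\nu)=\max\{0,\min\{1,(\alpha-\nu)/(1-\lambda)\}\}$ (a continuous, non-increasing, clipped-linear function of $\nu$ with values in $[0,1]$), equation \eqref{nusoln2} becomes $\sum_{\alpha\in A_{u_n,\tau}}a_{u_n,\tau,\alpha}\,(\alpha-c_\alpha(\nu))=0$. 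Since all the weights $a_{u_n,\tau,\alpha}$ are strictly positive, it then suffices to control the sign of each summand $\alpha-c_\alpha(\nu)$.

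The crux is the elementary claim that, for each $\alpha\in A_{u_n,\tau}$ — so $\alpha\in(0,1)$ by Proposition \ref{Aprop}, using $\tau>0$ and $u_n\notin\{\mathbf{0},\mathbf{1}\}$ (which follows from $0<M<\ip{\mathbf{1},\mathbf{1}}$) — and recalling $0\le\lambda<1$ here with $\lambda=\tau/\varepsilon>0$ as $\tau>0$, so $0<\lambda<1$, one has $\alpha-c_\alpha(\nu)>0$ whenever $\nu>\lambda\alpha$ and $\alpha-c_\alpha(\nu)<0$ whenever $\nu<\lambda\alpha$. This is a short case check over the three branches of $c_\alpha$: on the middle branch $\alpha-c_\alpha(\nu)=(\nu-\lambda\alpha)/(1-\lambda)$, which carries the sign of $\nu-\lambda\alpha$; and the two outer branches that would give the wrong sign are excluded by the hypotheses, since the branch $\nu\le\alpha-(1-\lambda)$ (where $c_\alpha=1>\alpha$) together with $\nu>\lambda\alpha$ forces $\lambda(\alpha-1)<\alpha-1$, hence $\lambda>1$ (dividing by $\alpha-1<0$), while the branch $\nu\ge\alpha$ together with $\nu<\lambda\alpha$ forces $\lambda\alpha<\alpha$, again $\lambda>1$.

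Given the claim, the proposition is immediate: if a solution $\nu$ of \eqref{nusoln2} satisfied $\nu>\lambda\max A_{u_n,\tau}$, then $\nu>\lambda\alpha$ for every $\alpha\in A_{u_n,\tau}$, so every summand $\alpha-c_\alpha(\nu)$ would be strictly positive and the sum could not vanish; symmetrically $\nu<\lambda\min A_{u_n,\tau}$ makes every summand strictly negative, also impossible. Hence $\nu\in[\lambda\min A_{u_n,\tau},\lambda\max A_{u_n,\tau}]$ (the endpoints are not excluded, the sign claim being stated with strict hypotheses, which also handles the degenerate case $|A_{u_n,\tau}|=1$), and since $\min A_{u_n,\tau}>0$ and $\max A_{u_n,\tau}<1$ by Proposition \ref{Aprop} while $0<\lambda<1$, this interval lies in $(0,\lambda)$. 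I expect the only non-routine step to be the opening observation $M=\sum_\alpha a_{u_n,\tau,\alpha}\,\alpha$: it converts \eqref{nusoln2} from an opaque statement about where a clipped-linear function attains a prescribed value into the transparent comparison $\sum_\alpha a_{u_n,\tau,\alpha}(\alpha-c_\alpha(\nu))=0$, after which everything is sign bookkeeping, the only hypotheses genuinely used being $\lambda<1$ (so $1-\lambda>0$ and the branch exclusions work) and $\alpha\in(0,1)$ (so the outer branches carry strict signs).
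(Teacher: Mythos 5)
Your proposal is correct and follows essentially the same route as the paper's proof: both use mass conservation of $e^{-\tau\Delta}$ to rewrite \eqref{nusoln2} as $\sum_{\alpha}a_{u_n,\tau,\alpha}(c_\alpha(\nu)-\alpha)=0$ and then argue that the sum has a strict sign when $\nu$ lies outside $[\lambda\min A_{u_n,\tau},\lambda\max A_{u_n,\tau}]$. Your per-term formulation (each summand $\alpha-c_\alpha(\nu)$ carries the sign of $\nu-\lambda\alpha$) is a slightly cleaner packaging of the paper's branch-by-branch case analysis, but the underlying argument is the same.
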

\begin{proof} By Propostion \ref{Aprop} and the condition on $M$, note that $A_{u_n,\tau}\subseteq(0,1)$.
Since diffusion preserves mass, $M = \mathcal{M}(e^{-\tau\Delta}u_n)$ and therefore \[M = \sum_{\alpha \in A_{u_n,\tau}} a_{u_n,\tau,\alpha} \alpha \]
and so we have by \eqref{nusoln2}:
\be
\label{nusoln3}
0=\sum_{\alpha \in A_{u_n,\tau}} a_{u_n,\tau,\alpha} \begin{cases}
	1-\alpha, & \nu \leq \alpha -(1-\lambda),\\
	\frac{\alpha-\nu}{1-\lambda}-\alpha, & \alpha-(1-\lambda)<\nu < \alpha,\\
	-\alpha, &\nu\geq \alpha,
\end{cases}\ee
i.e., $\nu$ is a solution to 
\[\begin{split} 0 = \sum_{\alpha \in [1-\lambda +\nu,1)\cap A_{u_n,\tau}} a_{u_n,\tau,\alpha} (1-\alpha)&+ \sum_{\alpha \in (\nu,1-\lambda +\nu) \cap A_{u_n,\tau}} a_{u_n,\tau,\alpha} \frac{\alpha\lambda-\nu}{1-\lambda} \\&+ \sum_{\alpha \in (0,\nu] \cap A_{u_n,\tau} } a_{u_n,\tau,\alpha} (-\alpha). \end{split}\]
First, suppose that $\nu < \lambda\min A_{u_n,\tau}< \min A_{u_n,\tau}$. Then \[ \sum_{\alpha \in (0,\nu] \cap A_{u_n,\tau} } a_{u_n,\tau,\alpha} (-\alpha) =0\] and $\alpha\lambda -\nu >\lambda(\alpha-\min A_{u_n,\tau})\geq 0$ for $\alpha \in A_{u_n,\tau}$ so
\[\sum_{\alpha \in [1-\lambda +\nu,1)\cap A_{u_n,\tau}} a_{u_n,\tau,\alpha} (1-\alpha)+ \sum_{\alpha \in (\nu,1-\lambda +\nu) \cap A_{u_n,\tau}} a_{u_n,\tau,\alpha} \frac{\alpha\lambda-\nu}{1-\lambda}  > 0\] hence $\nu$ does not solve \eqref{nusoln3}.
%with equality if and only if $A = \{1\}$, i.e. if and only if $M=\ip{\mathbf{1},\mathbf{1}}$. 
Next, suppose that $\nu >\lambda\max A_{u_n,\tau}$. Then we have $\max A_{u_n,\tau} = (1-\lambda)\max A_{u_n,\tau} + \lambda \max A_{u_n,\tau} < 1-\lambda + \nu$ so \[\sum_{\alpha \in [1-\lambda +\nu,1)\cap A_{u_n,\tau}} a_{u_n,\tau,\alpha} (1-\alpha)=0\] and $\alpha\lambda -\nu <\lambda(\alpha-\max A_{u_n,\tau})\leq 0$ for $\alpha \in A_{u_n,\tau}$ so
\[ \sum_{\alpha \in (\nu,1-\lambda +\nu) \cap A_{u_n,\tau}} a_{u_n,\tau,\alpha} \frac{\alpha\lambda-\nu}{1-\lambda} + \sum_{\alpha \in (0,\nu] \cap A_{u_n,\tau} } a_{u_n,\tau,\alpha} (-\alpha) < 0.\]
%with equality if and only if $A = \{0\}$, i.e. if and only if $M=0$. 
Thus if $\nu$ solves \eqref{nusoln3} we must have $\nu\in  [\lambda\min A_{u_n,\tau},\lambda\max A_{u_n,\tau}]$.
\end{proof}
\begin{nb}
If $M=0$ then $u^*=\mathbf{0}=u_n$, which is satisfied if and only if $\nu \geq  (e^{-\tau\Delta}u_n)_i = 0$. 
If $M=\ip{\mathbf{1},\mathbf{1}}$ then $u^*=\mathbf{1}=u_n$, which is satisfied if and only if $\nu \leq  (e^{-\tau\Delta}u_n)_i - 1 +\lambda = \lambda$. Hence we can always assume $\nu$ to lie in $[0,\lambda]$.  
\end{nb}
\subsection{Behaviour as $\lambda\uparrow 1$}
Usefully, for $\lambda<1$ \eqref{mACobsMM} is strictly convex, so it has a unique solution $u_{n+1}^\lambda$. In this section we show that as $\lambda\uparrow 1$ these solutions converge, yielding a choice function for solutions of \eqref{mMBO}.
By the discussion in section \ref{s22} we have the following theorem.
\begin{thm}\label{lamble1soln}
	For $0\leq\lambda<1$, \eqref{mACobsMM} has a unique solution 
	\begin{equation}\label{usoln1}\hspace{-0.1em}
	(u^\lambda _{n+1})_i=\begin{cases}
		0, &\text{if and only if} \:\: \nu\geq (e^{-\tau\Delta}u_n)_i, \\
		\frac{(e^{-\tau\Delta}u_n)_i - \nu}{1-\lambda}, &\text{if and only if} \:\:(e^{-\tau\Delta}u_n)_i - (1-\lambda)<\nu<(e^{-\tau\Delta}u_n)_i,\\
		1, &\text{if and only if} \:\: \nu \leq (e^{-\tau\Delta}u_n)_i - (1-\lambda),
	\end{cases}
\end{equation} 
where $\nu$ is a solution to \eqref{nusoln2} and hence $\nu\in  [0,\lambda]$.
\end{thm}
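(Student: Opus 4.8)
The plan is to assemble the statement from the Lagrangian-duality analysis already carried out in Section~\ref{s22}, supplemented by a short uniqueness argument. First I would note that for $\lambda<1$ the objective of \eqref{mACobsMM}, namely $(1-\lambda)\|u\|_\V^2-2\langle u,e^{-\tau\Delta}u_n\rangle_\V$, is strictly convex in $u$ (its Hessian is $2(1-\lambda)I$, positive definite), while the feasible set $X=\V_{[0,1]}\cap S_M$ is nonempty (it contains $u_n$), convex and compact. Hence the minimum is attained at a unique point $\tilde u$, so \eqref{mACobsMM} has a unique solution; it then only remains to exhibit that solution in the form \eqref{usoln1}.

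Next I would identify $\tilde u$ by duality. By Lemma~\ref{sduality}, Slater's condition holds (witnessed again by $u_n\in X$), so strong duality holds; since Slater's condition for an affinely constrained convex program also guarantees that the dual optimum is attained, there is a dual optimal triple $(\xi^*,\mu^*,\nu^*)$ with $\xi^*,\mu^*\ge 0$. Lemma~\ref{sduality} then yields that $u^*(\xi^*,\mu^*,\nu^*)$ from \eqref{ustar} lies in $X$ and is primal optimal, hence equals $\tilde u$ by uniqueness. Complementary slackness gives $\xi_i^*=0$ or $\mu_i^*=0$ at each $i$, together with the implications $u_i^*=0\Rightarrow\mu_i^*=0$, $u_i^*\in(0,1)\Rightarrow\xi_i^*=\mu_i^*=0$, $u_i^*=1\Rightarrow\xi_i^*=0$ recorded in Section~\ref{s22}; substituting these into \eqref{ustar} coordinatewise produces the three-case description, and since the three conditions on $\nu$ relative to $(e^{-\tau\Delta}u_n)_i$ are mutually exclusive and jointly exhaustive, the one-way implications upgrade to equivalences, giving exactly \eqref{usoln1}.

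It remains to pin down $\nu$. Since $u^*=\tilde u\in X$ we have $\mathcal{M}(u^*)=M$; computing $\mathcal{M}(u^*)=\sum_i d_i^r(u^*)_i$ from \eqref{usoln1} and grouping the vertices by the value of $(e^{-\tau\Delta}u_n)_i$ (Definition~\ref{Adef}) gives precisely \eqref{nusoln2}, so $\nu=\nu^*$ solves it; a solution exists by the Intermediate Value Theorem, as the right-hand side of \eqref{nusoln2} is continuous and nonincreasing in $\nu$, equal to $\mathcal{M}(\mathbf{1})\ge M$ for $\nu$ sufficiently negative and to $0\le M$ for $\nu$ sufficiently large. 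Finally, Proposition~\ref{lambdaprop} handles $0<M<\mathcal{M}(\mathbf{1})$, giving $\nu\in[\lambda\min A_{u_n,\tau},\lambda\max A_{u_n,\tau}]\subseteq(0,\lambda)$, while the degenerate cases $M=0$ (where $u^*=\mathbf{0}=u_n$) and $M=\mathcal{M}(\mathbf{1})$ (where $u^*=\mathbf{1}=u_n$) are exactly the boundary situations noted after that proposition; in all cases one may take $\nu\in[0,\lambda]$.

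The genuinely substantive content (strong duality, the complementary-slackness bookkeeping, and Proposition~\ref{lambdaprop}) lives in Section~\ref{s22}, so the main points requiring care are: confirming the dual optimum is attained (the standard consequence of Slater's condition for an affinely constrained convex program), verifying that the piecewise formula \eqref{usoln1} is self-consistent and that its three $\nu$-regimes partition $\mathbb{R}$ so the chain of implications becomes an ``if and only if'', and checking that \eqref{usoln1} returns the same $u$ for any solution $\nu$ of \eqref{nusoln2} (immediate, since that value is the unique primal optimum, but worth remarking given that $\nu$ itself need not be unique). None of these is deep, but each is needed to make the appeal to ``the discussion in Section~\ref{s22}'' rigorous.
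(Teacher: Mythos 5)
Your proposal is correct and follows essentially the same route as the paper, which proves this theorem by direct appeal to the duality discussion of Section~\ref{s22}: strict convexity for uniqueness, Lemma~\ref{sduality} and complementary slackness for the form \eqref{usoln1}, the mass constraint for \eqref{nusoln2} with existence by the Intermediate Value Theorem, and Proposition~\ref{lambdaprop} together with the subsequent note for $\nu\in[0,\lambda]$. Your explicit remark that Slater's condition also guarantees attainment of the dual optimum is a small point the paper leaves implicit, but it does not change the argument.
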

As a prelude to investigating the convergence properties of $u^\lambda_{n+1}$, we first show that convergence of solutions of \eqref{mACobsMM} as $\lambda\uparrow 1$ is relevant to solving \eqref{mMBO}.
\begin{thm}\label{GC}
	Fix $u_n$ and denote the objective function in \eqref{mACobsMM} by\emph{:} \[f_\lambda :u\mapsto (1-\lambda)\left|\left|u\right|\right|^2_\V -2\ip{u,e^{-\tau\Delta}u_n}.\] Then as $\lambda \uparrow 1$, $f_\lambda\rightarrow f_1$ uniformly on $X$, and note that $f_1$ is equivalent to the objective function in \eqref{mMBO}. Furthermore, if $(u^\lambda)\in X$ solve \eqref{mACobsMM} and $u^\lambda\rightarrow u$ as $\lambda\uparrow 1$, then $u\in X$ is a solution to \eqref{mMBO}.  
\end{thm}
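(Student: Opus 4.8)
The plan is to dispatch the three assertions in order, the first two being immediate and the third a routine argument of $\Gamma$-convergence type resting on the uniform convergence.

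For the uniform convergence: for every $u\in\V$ one computes $f_\lambda(u)-f_1(u)=(1-\lambda)\|u\|_\V^2$, and since $u\in X\subseteq\V_{[0,1]}$ gives $\|u\|_\V^2=\sum_{i\in V}u_i^2 d_i^r\le\sum_{i\in V}d_i^r=\mathcal{M}(\mathbf{1})$, it follows that $\sup_{u\in X}|f_\lambda(u)-f_1(u)|\le(1-\lambda)\mathcal{M}(\mathbf{1})\to 0$ as $\lambda\uparrow 1$. For the identification of the limit objective, $f_1(u)=-2\ip{u,e^{-\tau\Delta}u_n}$, so minimising $f_1$ over $X$ is precisely the problem \eqref{mMBO} of maximising $\ip{u,e^{-\tau\Delta}u_n}$ over $X$ (up to the sign and scalar that turn the minimisation into a maximisation), and hence $\argmin_{u\in X}f_1=S_{\tau,u_n}$.

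For the third assertion, let $(u^\lambda)\subseteq X$ solve \eqref{mACobsMM}, i.e. $u^\lambda\in\argmin_{v\in X}f_\lambda$, and suppose $u^\lambda\to u$ as $\lambda\uparrow 1$; since $X$ is closed we get $u\in X$. The key step is $f_\lambda(u^\lambda)\to f_1(u)$: indeed $|f_\lambda(u^\lambda)-f_1(u)|\le|f_\lambda(u^\lambda)-f_1(u^\lambda)|+|f_1(u^\lambda)-f_1(u)|$, where the first term is $\le(1-\lambda)\mathcal{M}(\mathbf{1})$ by the uniform bound above and the second $\to 0$ by continuity of $f_1$ and $u^\lambda\to u$. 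Then for an arbitrary competitor $v\in X$, optimality of $u^\lambda$ gives $f_\lambda(u^\lambda)\le f_\lambda(v)$, while $f_\lambda(v)=(1-\lambda)\|v\|_\V^2-2\ip{v,e^{-\tau\Delta}u_n}\to f_1(v)$; letting $\lambda\uparrow 1$ yields $f_1(u)\le f_1(v)$. As $v\in X$ was arbitrary, $u\in\argmin_{v\in X}f_1=S_{\tau,u_n}$, so $u$ solves \eqref{mMBO}.

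There is no serious obstacle here: the only delicate point is the interchange of the two limiting processes ($\lambda\uparrow 1$ against evaluation along the sequence $(u^\lambda)$), and this is exactly what \emph{uniform} convergence of $f_\lambda$ on the compact set $X$ is for. It is worth noting that this theorem is conditional on the hypothesis $u^\lambda\to u$; proving that this limit actually exists is deferred, and will rest on the fact from Theorem \ref{lamble1soln} that the multiplier $\nu$ may be taken in the fixed interval $[0,\lambda]\subseteq[0,1]$, so that the explicit formula \eqref{usoln1} for $u^\lambda_{n+1}$ can be passed to a convergent (sub)limit.
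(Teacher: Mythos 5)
Your proof is correct and follows essentially the same route as the paper's: the identical pointwise computation $f_\lambda(u)-f_1(u)=(1-\lambda)\|u\|_\V^2\le(1-\lambda)\|\mathbf{1}\|_\V^2$ for the uniform bound on $X$, and the same use of uniform convergence plus minimality of $u^\lambda$ and continuity of $f_1$ to pass the inequality $f_1(u)\le f_1(v)$ to the limit. The only difference is cosmetic (you pass to the limit directly rather than via the paper's explicit $\varepsilon$--$\delta$ bookkeeping), so there is nothing to add.
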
 
\begin{proof}
For any $u\in X$ and $\lambda \leq 1$, 
\[|f_\lambda(u) - f_1(u)| = (1-\lambda)||u||_\V^2 \leq (1-\lambda)||\mathbf{1}||_\V^2\]
which tends to zero uniformly as $\lambda\uparrow 1$. 

Next, suppose $u^\lambda\rightarrow u$  as above. Then $u\in X$ since $X$ is closed. By uniform convergence, for all $\varepsilon > 0$ we have some $\delta> 0$ such that for all $\lambda\in(1-\delta,1)$ and all $v\in X$ \[ |f_\lambda(v)-f_1(v)|\leq \varepsilon/2.\]
Therefore since the $u^\lambda$ minimise $f_\lambda$, for any $v\in X$ we have
\[ f_1(u^\lambda) -\varepsilon/2 \leq f_\lambda(u^\lambda) \leq  f_\lambda(v) \leq f_1(v) + \varepsilon/2.\] 
Since $f_1$ is continuous we can take $\lambda\uparrow 1$ and rearrange to get 
\[ f_1(u) \leq f_1(v) +\varepsilon \]
and since $\varepsilon$ was arbitrary we must have that $u$ is a minimiser of $f_1$.
%	As $\Gamma$-convergence is stable with respect to continuous perturbations, it is sufficient to show that in $X$
%	\[ 
%	\Glim_{\lambda\uparrow 1}\, (1-\lambda)\left|\left|u\right|\right|^2_\V = 0.
%	\]
%	Since $(1-\lambda)\left|\left|u\right|\right|^2_\V \geq 0$ on $X$, for all $\lambda\leq1$, the lim-inf condition immediately follows: 
%	\[ \forall (\lambda_m)\uparrow 1\: \forall \text{convergent }(u_m)\in X \: \:\: \:\: \: 0\leq \liminf_{m\rightarrow\infty} (1-\lambda_m)\left|\left|u_m\right|\right|^2_\V.\]
%	Next, for any $u\in X$ and $\lambda_m \uparrow 1$ take recovery sequence $u_m \equiv u$. Then \[ \lim_{m\rightarrow\infty} \: (1-\lambda_m)\left|\left|u_m\right|\right|^2_\V = 0\] as desired. This proves the $\Gamma$-convergence. 
%	
%	The second claim follows from the fundamental theorem of $\Gamma$-convergence, so long as the $(f_\lambda)$ are equi-coercive on $X$ and the $(u^\lambda)$ are precompact in $X$. But since $X$ is compact (as it is closed, bounded and finite-dimensional) these properties are trivial.  
\end{proof}
%We now proceed to the question of the convergence of $u^\lambda_{n+1}$. 
\begin{thm}\label{sdMBOconv}
	Suppose $M=\mathcal{M}(u_n)\in(0,\mathcal{M}(\mathbf{1}))$, and take $k$ as in Theorem~\ref{lamb1soln} with \be\label{kdef}\sum_{l=k+1}^K a_{u_n,\tau,\alpha_l} < M \leq  \sum_{l=k}^K a_{u_n,\tau,\alpha_l}.\ee
	Then for some sufficiently small $\delta>0$, depending only on $e^{-\tau\Delta} u_n$, and each $\lambda\in(1-\delta,1)$ \begin{equation}
		\label{usolnlim}\hspace{-0.5em}(u^\lambda _{n+1})_i=\begin{cases}
		0, &\text{if and only if} \:\: (e^{-\tau\Delta}u_n)_i\leq \alpha_{k-1}, \\
		a_{u_n,\tau,\alpha_k}^{-1}\left(M - \sum_{l=k+1}^K a_{u_n,\tau,\alpha_l}\right) , &\text{if and only if} \:\:(e^{-\tau\Delta}u_n)_i =\alpha_k,\\
		1, &\text{if and only if} \:\: (e^{-\tau\Delta}u_n)_i \geq \alpha_{k+1},
	\end{cases}
	\end{equation}
	and thus $u^\lambda_{n+1}$ converges to the RHS of \eqref{usolnlim} as $\lambda\uparrow 1$.
	\end{thm}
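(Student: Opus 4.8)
The plan is to combine the explicit solution formula of Theorem~\ref{lamble1soln} with the balance equation \eqref{nusoln2} for the dual multiplier $\nu$, pinning $\nu$ down exactly once $\lambda$ is close to $1$, so that the resulting $u^\lambda_{n+1}$ is in fact \emph{independent} of $\lambda$ near $1$. Write $A_{u_n,\tau} = \{\alpha_1 < \cdots < \alpha_K\}$ and abbreviate $a_l := a_{u_n,\tau,\alpha_l}$; since $0 < M < \mathcal{M}(\mathbf{1})$, Proposition~\ref{Aprop} gives $A_{u_n,\tau} \subseteq (0,1)$. Set
\[ \delta := \min_{1 \le l \le K-1}(\alpha_{l+1} - \alpha_l) \]
(and $\delta := 1$ if $K = 1$), which depends only on $e^{-\tau\Delta}u_n$; set
\[ \theta := a_k^{-1}\Big(M - \sum_{l=k+1}^{K} a_l\Big), \]
which by \eqref{kdef} lies in $(0,1]$ and does not involve $\lambda$; and for $\lambda \in (1-\delta,1)$ define $\nu := \alpha_k - (1-\lambda)\theta$. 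All statements mentioning $\alpha_0$ or $\alpha_{K+1}$ below are read as vacuous, i.e. ``$(e^{-\tau\Delta}u_n)_i \le \alpha_{k-1}$'' means ``$(e^{-\tau\Delta}u_n)_i < \alpha_k$'' and similarly at the top.

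First I would verify that this $\nu$ solves \eqref{nusoln2}. Since $0 \le (1-\lambda)\theta \le 1-\lambda < \delta$, we have $\alpha_{k-1} < \nu \le \alpha_k$ and $\alpha_k \le \alpha_{k+1} - (1-\lambda)$. Reading off the three branches of \eqref{nusoln2}: for $l < k$, $\nu \ge \alpha_{k-1} \ge \alpha_l$, so the $\alpha_l$-term is $0$; for $l > k$, $\nu \le \alpha_k \le \alpha_{k+1} - (1-\lambda) \le \alpha_l - (1-\lambda)$, so the $\alpha_l$-term is $1$; for $l = k$, either $\theta \in (0,1)$, in which case $\alpha_k - (1-\lambda) < \nu < \alpha_k$ and the term is $(\alpha_k - \nu)/(1-\lambda) = \theta$, or $\theta = 1$, in which case $\nu = \alpha_k - (1-\lambda)$ and the term is $1 = \theta$. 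Summing, the right-hand side of \eqref{nusoln2} is $a_k\theta + \sum_{l=k+1}^K a_l = M$ by the definition of $\theta$.

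Next I would substitute this $\nu$ into \eqref{usoln1}. For $i$ with $(e^{-\tau\Delta}u_n)_i \le \alpha_{k-1}$, $\nu \ge \alpha_{k-1} \ge (e^{-\tau\Delta}u_n)_i$, so $(u^\lambda_{n+1})_i = 0$; for $i$ with $(e^{-\tau\Delta}u_n)_i \ge \alpha_{k+1}$, $\nu \le \alpha_k \le (e^{-\tau\Delta}u_n)_i - (1-\lambda)$, so $(u^\lambda_{n+1})_i = 1$; and for $i$ with $(e^{-\tau\Delta}u_n)_i = \alpha_k$, the same case split as above gives $(u^\lambda_{n+1})_i = (\alpha_k - \nu)/(1-\lambda) = \theta$ when $\theta < 1$ and $(u^\lambda_{n+1})_i = 1 = \theta$ when $\theta = 1$. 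This is exactly the right-hand side of \eqref{usolnlim}. By Theorem~\ref{lamble1soln}, any solution of \eqref{nusoln2} produces, via \eqref{usoln1}, the unique minimiser of \eqref{mACobsMM}, so \eqref{usolnlim} holds for every $\lambda \in (1-\delta,1)$; as that right-hand side is independent of $\lambda$, the family $(u^\lambda_{n+1})$ is constant on $(1-\delta,1)$ and hence converges to it trivially as $\lambda \uparrow 1$.

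The main obstacle is the case bookkeeping in the two substitution steps, together with the point that $\delta$ must depend only on $e^{-\tau\Delta}u_n$, not on $M$: this works because the only gaps that enter the argument, $\alpha_k - \alpha_{k-1}$ and $\alpha_{k+1} - \alpha_k$, are bounded below by $\min_l(\alpha_{l+1}-\alpha_l)$ for every index $k \in \{1,\dots,K\}$ that can arise. The boundary case $\theta = 1$ (equivalently $M = \sum_{l=k}^K a_l$), where $\nu$ sits exactly on the edge of the affine regime of \eqref{nusoln2} and \eqref{usoln1}, must be tracked separately but is absorbed by the same uniform case split.
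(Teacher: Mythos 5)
Your proposal is correct, and it takes a genuinely different route from the paper. The paper starts from an \emph{arbitrary} dual multiplier $\nu$ solving \eqref{nusoln2}, uses Proposition \ref{lambdaprop} to place it in $(0,\lambda)$, exploits the disjointness of small balls around the $\alpha_l$ to show $\nu$ lies in some interval $(\alpha_{m-1},\alpha_{m+1}-(1-\lambda))$, and then runs a case split ($m=k$ versus $m=k-1$) to extract \eqref{usolnlim}. You instead \emph{construct} the explicit candidate $\nu=\alpha_k-(1-\lambda)\theta$, verify branch by branch that it solves \eqref{nusoln2}, and substitute into \eqref{usoln1}; this avoids Proposition \ref{lambdaprop} and the $m=k$/$m=k-1$ dichotomy entirely, and makes it transparent that $u^\lambda_{n+1}$ is literally constant on $(1-\delta,1)$ rather than merely convergent. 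The one thing your argument leans on more heavily is the reading of Theorem \ref{lamble1soln} under which \emph{every} solution $\nu$ of \eqref{nusoln2} produces, via \eqref{usoln1}, the unique minimiser of \eqref{mACobsMM} — otherwise your constructed $\nu$ might not be ``the right one.'' This is justified (any such $\nu$, with the multipliers $\xi,\mu$ from complementary slackness, gives a KKT point of a convex problem satisfying Slater's condition, hence the unique minimiser by strict convexity), and the paper's own proof uses the same reading when it says ``choose $\nu$ solving \eqref{nusoln2},'' but it would be worth one sentence to make that explicit. Your bookkeeping of the edge case $\theta=1$ and of the vacuous cases $k=1$, $k=K$ is correct, and your $\delta=\min_l(\alpha_{l+1}-\alpha_l)$ visibly depends only on $e^{-\tau\Delta}u_n$, as required.
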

\begin{proof}As $A_{u_n,\tau}$ is a finite set, we can take $\delta>0$ sufficiently small so that the $\delta$-balls around the $\alpha\in A_{u_n,\tau}$ are disjoint. Let $\lambda \in( 1-\delta,1)$ and choose $\nu$ solving \eqref{nusoln2}. Then  by Proposition \ref{lambdaprop}, $\nu\in(0,\lambda)$, by \eqref{nusoln2} we have 
\[M = \sum_{\alpha \in A_{u_n,\tau}} a_{u_n,\tau,\alpha} \begin{cases}
	1, & \nu \leq \alpha -(1-\lambda),\\
	\frac{\alpha-\nu}{1-\lambda}, & \alpha-(1-\lambda)<\nu < \alpha,\\
	0, &\nu\geq \alpha,
\end{cases}\]
and by choice of $\delta$, $\nu$ is within $1-\lambda$ of at most one $\alpha$.
Let $\alpha_0 := 0$ and $\alpha_{K+1}:=1$. Then there exists $  1\leq m \leq K$ such that $\nu\in(\alpha_{m-1},\alpha_{m+1}-(1-\lambda))$, since these intervals cover $(0,\lambda)$, and we have 
\[M = \sum_{l=m+1}^K a_{u_n,\tau,\alpha_l} +a_{u_n,\tau,\alpha_m} \max\left\{\min\left\{\frac{\alpha_m-\nu}{1-\lambda},1\right\},0\right\}. \] 
Hence by \eqref{kdef} we must have either $m=k$ if $\nu<\alpha_m$ or $m = k - 1$ if $\nu\geq\alpha_m$. 
If $\nu<\alpha_m$,	\[\frac{\alpha_k-\nu}{1-\lambda} = a_{u_n,\tau,\alpha_k}^{-1}\left(M - \sum_{l=k+1}^K a_{u_n,\tau,\alpha_l}\right) \] which by \eqref{usoln1} gives \eqref{usolnlim}.  
If $\nu\in[\alpha_m,\alpha_{m+1}-(1-\lambda))$ then by \eqref{usoln1} and since $m = k-1$ 
\[
(u^\lambda _{n+1})_i=\begin{cases} 0, &\text{if and only if }  (e^{-\tau\Delta}u_n)_i\leq \alpha_{m}=\alpha_{k-1}, \\
		1, &\text{if and only if }  (e^{-\tau\Delta}u_n)_i \geq \alpha_{m+1}=\alpha_k.
	\end{cases}
\]
Therefore
\[M =  \sum_{l=k}^K a_{u_n,\tau,\alpha_l}, \]
so it follows that 
\[  a_{u_n,\tau,\alpha_k}^{-1}\left(M - \sum_{l=k+1}^K a_{u_n,\tau,\alpha_l}\right)= 1\]
and so \eqref{usolnlim} follows.
\end{proof}
\begin{nb}
If $M = 0$ or $M = \mathcal{M}(\mathbf{1})$ then the $u_{n+1}^\lambda$ can only be $\mathbf{0}$ or only be $\mathbf{1}$, respectively, and the convergence is trivial, so the supposition on $M$ incurs no loss of generality.
\end{nb}
\begin{nb}
The RHS of \eqref{usolnlim} can immediately be seen to solve \eqref{mMBO} as it satisfies the conditions of \eqref{mMBOsoln}. Furthermore, note that $u^\lambda_{n+1}$ converges to a point in $\Ext X$ \emph{(}i.e. the RHS of \eqref{usolnlim} is in $\Ext X$\emph{)} if and only if \eqref{uniquecon} holds, i.e. if and only if it converges to the unique solution of \eqref{mMBO}.
\end{nb}

\subsection{The converse of Theorem \ref{obsMMprop}}
In this section we prove the following theorem.
\begin{thm}\label{obsMMconverse}
If $u=u_{n+1}$ solves \eqref{mACobsMM}, then $\exists \beta\in\mathcal{B}(u)$ \emph{(}given by \eqref{betasoln1} when $\lambda =1$ and \eqref{betasoln2} when $0\leq \lambda <1$\emph{)}, such that $(u,\beta)$ is a solution to \eqref{mSDobs} \emph{(}for $\beta$ as $\beta_{n+1}$\emph{)}. \end{thm}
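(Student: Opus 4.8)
The plan is to exploit the explicit descriptions of the minimisers of \eqref{mACobsMM} that have already been established, and in each of the two regimes to simply write down a candidate $\beta$, verify $\beta\in\mathcal{B}(u)$, and then check \eqref{mSDobs} by substitution. Throughout, $u=u_{n+1}\in X\subseteq\V_{[0,1]}$ with $\mathcal{M}(u)=\mathcal{M}(u_n)$, so that $\overline{u_{n+1}}=\overline{u_n}=\overline{e^{-\tau\Delta}u_n}$; and since $u\in\V_{[0,1]}$, membership of $\beta$ in $\mathcal{B}(u)$ reduces to the sign conditions $\beta_i\ge0$ wherever $u_i=0$, $\beta_i=0$ wherever $0<u_i<1$, and $\beta_i\le0$ wherever $u_i=1$.

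For $\lambda=1$, $u$ solves \eqref{mMBO}, so by Theorem~\ref{lamb1soln} there is a threshold level $\alpha_k\in A_{u_n,\tau}$ for which $u$ satisfies the conditions of that theorem, and I set $\beta:=\alpha_k\mathbf{1}-e^{-\tau\Delta}u_n$. The one-sided implications of Theorem~\ref{lamb1soln} (``$u_i=0$ if $(e^{-\tau\Delta}u_n)_i<\alpha_k$'' and ``$u_i=1$ if $(e^{-\tau\Delta}u_n)_i>\alpha_k$'') give, contrapositively, $u_i=0\Rightarrow(e^{-\tau\Delta}u_n)_i\le\alpha_k$, $u_i=1\Rightarrow(e^{-\tau\Delta}u_n)_i\ge\alpha_k$, and $0<u_i<1\Rightarrow(e^{-\tau\Delta}u_n)_i=\alpha_k$; translating these into statements about $\beta_i=\alpha_k-(e^{-\tau\Delta}u_n)_i$ yields exactly the three sign conditions, so $\beta\in\mathcal{B}(u)$. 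Finally $\overline{\beta}=\alpha_k-\overline{u_n}$, hence $\beta-\overline{\beta}\mathbf{1}=\overline{u_{n+1}}\mathbf{1}-e^{-\tau\Delta}u_n$, which is precisely what \eqref{mSDobs} demands once $\lambda=1$ is substituted (the two $u_{n+1}$-terms on the left cancel).

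For $0\le\lambda<1$ the minimiser of \eqref{mACobsMM} is unique. If $\lambda=0$, then \eqref{mACobsMM} forces $u=e^{-\tau\Delta}u_n$ while \eqref{mSDobs} reduces to $u_{n+1}=e^{-\tau\Delta}u_n$ with the $\beta$-terms vanishing, so $\beta:=\mathbf{0}\in\mathcal{B}(u)$ works. If $0<\lambda<1$, Theorem~\ref{lamble1soln} gives $u=u^{\lambda}_{n+1}$ of the form \eqref{usoln1} for some $\nu$ solving \eqref{nusoln2}; I fix such a $\nu$ and set
\[\beta:=\tfrac{1}{\lambda}\big((1-\lambda)u-e^{-\tau\Delta}u_n+\nu\mathbf{1}\big).\]
Writing $w:=e^{-\tau\Delta}u_n$ and inspecting the three branches of \eqref{usoln1}: where $u_i=0$ we have $\nu\ge w_i$, so $\beta_i=\lambda^{-1}(\nu-w_i)\ge0$; where $u_i=(w_i-\nu)/(1-\lambda)\in(0,1)$ the bracket defining $\beta_i$ equals $(1-\lambda)\cdot\frac{w_i-\nu}{1-\lambda}-w_i+\nu=0$; where $u_i=1$ we have $\nu\le w_i-(1-\lambda)$, so $\beta_i=\lambda^{-1}(\nu-w_i+1-\lambda)\le0$; hence $\beta\in\mathcal{B}(u)$. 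Using $\overline{w}=\overline{u_n}$ one computes $\lambda\overline{\beta}=\nu-\lambda\overline{u_n}$, whence $\lambda(\beta-\overline{\beta}\mathbf{1})=(1-\lambda)u-w+\lambda\overline{u_{n+1}}\mathbf{1}$, which is exactly the left-hand side of \eqref{mSDobs}.

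Since both parts are verifications against the already-derived solution formulas, I do not expect a genuine obstacle. The two points needing care are (i) correctly upgrading the one-directional conditions of Theorem~\ref{lamb1soln} to the two-sided facts used above, and (ii) confirming that the single additive constant appearing in $\beta$ (namely $\alpha_k$, resp.\ $\nu/\lambda$) can simultaneously meet all three sign requirements — which is guaranteed by, respectively, the consistency relation for the threshold in Theorem~\ref{lamb1soln} and the mutually exclusive case conditions in \eqref{usoln1}. As a consistency check (not needed for the argument), these two formulas for $\beta$ coincide in the limit $\lambda\uparrow1$, matching the convergence established in Theorem~\ref{sdMBOconv}.
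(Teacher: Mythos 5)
Your proof is correct and follows essentially the same route as the paper: the same candidate $\beta$ in each regime (your closed form $\lambda^{-1}((1-\lambda)u-e^{-\tau\Delta}u_n+\nu\mathbf{1})$ is exactly \eqref{betasoln2} written branch-free, which lets you avoid the paper's separate check of the binary case), verified to lie in $\mathcal{B}(u)$ via the case structure of Theorems \ref{lamb1soln} and \ref{lamble1soln} and then substituted into \eqref{mSDobs}. Your explicit handling of $\lambda=0$ is a small tidiness gain over the paper; the only omission is the trivial $M=0$ edge case for $\lambda=1$ (where Theorem \ref{lamb1soln} does not apply but $u=\mathbf{0}$, $\beta=\mathbf{0}$ works), which the paper dispatches in one line.
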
 
\begin{nb} If $(u,\beta)$ and $(u,\beta')$ solve \eqref{mSDobs} then rearranging we get \[\beta -\beta' =\bar\beta \mathbf{1}-\bar{\beta'} \mathbf{1}\] i.e. $\beta$ and $\beta'$ differ only by a multiple of $\mathbf{1}$. So, for a given $u$ and $\beta\in\mathcal{B}(u)$, $(u,\beta)$ is a solution if and only if $(u,\beta')$ is a solution for all and only the $\beta'\in\{\beta + \theta\mathbf{1}\mid\theta\in\mathbb{R}\}\cap\mathcal{B}(u)$. If $u_i\in(0,1)$ for an $i\in V$ and $(u,\beta)$ and $(u,\beta')$ solve \eqref{mSDobs}, then $\beta = \beta'$ as $\beta_i=\beta'_i=0$. % by the definition of $\mathcal{B}(u)$.
\end{nb}
\subsubsection{$\lambda =1$}
If $M=0$ then $u=u_n=\mathbf{0}$, is trivially a solution to \eqref{mSDobs}, for e.g. $\beta = \mathbf{0}$, hence WLOG we can suppose $M=\mathcal{M}(u_n)>0$. Let $k$ be as in Theorem~\ref{lamb1soln}, such that \[\sum_{l=k+1}^K a_{u_n,\tau,\alpha_l} < M \leq  \sum_{l=k}^K a_{u_n,\tau,\alpha_l}.\] Then, recalling Theorem \ref{lamb1soln}, any solution $u$ to \eqref{mACobsMM} for $\lambda =1$ must satisfy 
		\begin{align*}
		&u_i = 0, \text { if } (e^{-\tau\Delta}u_n)_i<\alpha_k,\\
		&u_i = 1, \text { if } (e^{-\tau\Delta}u_n)_i>\alpha_k,\\
		&M - \sum_{l=k+1}^K a_{u_n,\tau,\alpha_l} = \sum_{(e^{-\tau\Delta}u_n)_i=\alpha_k} d_i^r u_i.
	\end{align*}
For $\lambda = 1$, \eqref{mSDobs} becomes 
\be\label{mSDobs1} -e^{-\tau\Delta}u_n +\frac{M}{\ip{\mathbf{1},\mathbf{1}}}\mathbf{1}=\beta-\frac{\ip{\beta,\mathbf{1} }}{\ip{\mathbf{1} ,\mathbf{1} }}  \mathbf{1}.\ee 
We seek to find a $\beta$ such that $\beta_i =0$ if $u_i\in(0,1)$. Note that if $u_i\in(0,1)$, then by Theorem \ref{lamb1soln} we have $(e^{-\tau\Delta}u_n)_i=\alpha_k$, so we desire to have 
\[-\alpha_k +\frac{M}{\ip{\mathbf{1},\mathbf{1}}}=-\frac{\ip{\beta,\mathbf{1} }}{\ip{\mathbf{1} ,\mathbf{1} }}.\]
Therefore substituting into \eqref{mSDobs1} we have candidate solution: \be\label{betasoln1}\beta = \alpha_k\mathbf{1}-e^{-\tau\Delta}u_n.\ee
We now verify that this candidate solution works even for binary $u$.
\begin{proof}[Proof of Theorem \ref{obsMMconverse} for  $\lambda = 1$. ]
We check that the $\beta$ as in \eqref{betasoln1} solves \eqref{mSDobs1}:
\[ -e^{-\tau\Delta}u_n +\frac{M}{\ip{\mathbf{1},\mathbf{1}}}\mathbf{1}=\alpha_k\mathbf{1}-e^{-\tau\Delta}u_n - \alpha_k\mathbf{1}+\frac{\ip{e^{-\tau\Delta}u_n,\mathbf{1} }}{\ip{\mathbf{1} ,\mathbf{1} }}  \mathbf{1}.\]
Moreover, by the form for $u$ from Theorem \ref{lamb1soln} it follows that $\beta\in\mathcal{B}(u)$.  %\qed
 \end{proof}
\subsubsection{$0\leq\lambda <1$}
%
%[[Let $S_1$ be the set of solutions to \eqref{mSDobs}, and $S_2$ the set of solutions to \eqref{mACobsMM}. From Theorem \ref{obsMMprop} $S_1\subseteq S_2$.  But for $\lambda<1$  \eqref{mACobsMM} is a strictly convex problem so has a unique solution $u^*$. Hence  $S_1\subseteq \{u^*\}$, so either $S_1=\emptyset$ or $S_1 = \{u^*\} = S_2$ as desired. Can we demonstrate existence non constructively?]]

For $0\leq\lambda<1$,  \eqref{mACobsMM} is strictly convex, so recalling \eqref{usoln1} it has unique solution \begin{equation*}
	u_i=\begin{cases}
		0, &\text{if and only if} \:\: \nu\geq (e^{-\tau\Delta}u_n)_i, \\
		\frac{(e^{-\tau\Delta}u_n)_i - \nu}{1-\lambda}, &\text{if and only if} \:\:(e^{-\tau\Delta}u_n)_i - (1-\lambda)<\nu<(e^{-\tau\Delta}u_n)_i,\\
		1, &\text{if and only if} \:\: \nu \leq (e^{-\tau\Delta}u_n)_i - (1-\lambda),
	\end{cases}
\end{equation*} 
where $\nu\in [0,\lambda]$ solving \eqref{nusoln2} is such that $\bar u = \overline{u_n}$. 
Hence \eqref{mSDobs} is satisfied if and only if for all $i\in V$ \be \label{mSDobs2} \lambda \beta_i -\lambda \bar\beta = \lambda\bar u + \begin{cases}
		-(e^{-\tau\Delta}u_n)_i, &\text{if} \:\: \nu\geq (e^{-\tau\Delta}u_n)_i, \\
		- \nu, &\text{if} \:\:(e^{-\tau\Delta}u_n)_i - (1-\lambda)<\nu<(e^{-\tau\Delta}u_n)_i,\\
		1-\lambda-(e^{-\tau\Delta}u_n)_i, &\text{if } \:\: \nu \leq (e^{-\tau\Delta}u_n)_i - (1-\lambda).
	\end{cases} 
%\\&=  \frac{\lambda M}{\ip{\mathbf{1},\mathbf{1}}}-\max\left\{\min\left\{(e^{-\tau\Delta}u_n)_i,\nu\right\},(e^{-\tau\Delta}u_n)_i-1 +\lambda\right\}
\ee
%Note as above we get that if $\beta$ and $\beta '$ are solutions then $\beta$ and $\beta'$ differ only by a multiple of $\mathbf{1}$, and conversely that if $\beta$ is a solution then so is $\beta + \theta\mathbf{1}$ for any $\theta\in\mathbb{R}$.

We seek a $\beta$ solving this with $\beta_i =0$ if $u_i\in(0,1)$. Suppose $\exists i\in V$ for which $u_i\in(0,1)$. This occurs when $(e^{-\tau\Delta}u_n)_i - (1-\lambda)<\nu<(e^{-\tau\Delta}u_n)_i$, and so at this $i$: \[ -\lambda\bar\beta = \lambda\bar u -\nu.\] Plugging into \eqref{mSDobs2} we get the candidate solution:
\be\label{betasoln2} \beta_i  = \lambda^{-1}\begin{cases}
		\nu-(e^{-\tau\Delta}u_n)_i, &\text{if} \:\: \nu\geq (e^{-\tau\Delta}u_n)_i, \\
		0, &\text{if} \:\:(e^{-\tau\Delta}u_n)_i - (1-\lambda)<\nu<(e^{-\tau\Delta}u_n)_i,\\
		\nu-(e^{-\tau\Delta}u_n)_i+1-\lambda, &\text{if } \:\: \nu \leq (e^{-\tau\Delta}u_n)_i - (1-\lambda),
	\end{cases} \ee
which obeys $\beta\in\mathcal{B}(u)$ since $u$ obeys \eqref{usoln1}.
\begin{proof}[Proof of Theorem \ref{obsMMconverse} for  $0\leq\lambda <1$. ]
By the above discussion, taking $u$ as in \eqref{usoln1} and $\beta$ as in \eqref{betasoln2} entails that $(u,\beta)$ is a solution to \eqref{mSDobs} \emph{if} $\exists i\in V$ with $u_i\in(0,1)$. We check the alternative case,
i.e. for all $i\in V$, $u_i\in \{0,1\}$. Take $\beta$ as in \eqref{betasoln2}. As $u$ is binary, either $\nu\geq (e^{-\tau\Delta}u_n)_i$ or $\nu\leq(e^{-\tau\Delta}u_n)_i - (1-\lambda)$ at each $i\in V$, so %by \eqref{betasoln2}
\[\lambda\beta = \nu\mathbf{1} - e^{-\tau\Delta}u_n + (1-\lambda)\chi_{\{i|(e^{-\tau\Delta}u_n)_i\geq \nu + 1 -\lambda \}}.\] 
But as $u$ is binary we have $u = \chi_{\{i|(e^{-\tau\Delta}u_n)_i\geq \nu + 1 -\lambda \}}$ and so 
\[\lambda\bar\beta= \nu -\bar u+ (1-\lambda)\bar u = \nu -\lambda\bar u.\]
Thus $\beta$ solves \eqref{mSDobs2}. Therefore $(u,\beta)$ is always a solution to \eqref{mSDobs}. \end{proof}
\subsection{A Lyapunov functional for the mass-conserving semi-discrete scheme}
In this section we show that the  Lyapunov functional for the semi-discrete scheme derived in \cite{Budd} is also a Lyapunov functional for the mass-conserving semi-discrete scheme. We then use this functional to examine the eventual behaviour of the scheme, extending the analysis in \cite{Budd} by accounting for the complications that arise due to the mass conservation condition. All results in this section assume only that the initial condition $u_0\in\V_{[0,1]}$, and are otherwise independent of the initial condition. 

Recall from \cite{vGGOB} the Lyapunov functional for the oridinary MBO scheme, i.e. the strictly concave functional $J:\V\rightarrow\mathbb{R}$\begin{equation*}
	J(u) := \ip{\mathbf{1}-u,e^{-\tau\Delta}u}
\end{equation*}
with first variation at $u$, $L_u:\V\rightarrow\mathbb{R}$\[ L_u(v) := \left\langle v,\mathbf{1}-2e^{-\tau\Delta}u\right \rangle_\V.\] 
\begin{thm}[Cf. \text{\cite[Theorem 14]{Budd}}]\label{Lyapthm}
	When $0\leq\lambda\leq 1$, the functional \emph{(}on $\V_{[0,1]}$\emph{)}
	\begin{equation}\label{Lyap}
	H(u):= J(u) + (\lambda-1)\ip{u,\mathbf{1}-u} = \lambda\ip{u,\mathbf{1}-u} +\left\langle u,\left(I-e^{-\tau\Delta}\right)u\right\rangle_\V
	\end{equation} 
	is non-negative, and furthermore the functional is a Lyapunov functional for \eqref{mSDobs}, in the sense that $H(u_{n+1}) \leq H(u_n)$ with equality if and only if $u_{n+1}=u_n$ for the sequence of $u_n\in\V_{[0,1]}$ defined by \eqref{mSDobs}. In particular, we have that \begin{equation}\label{Hstep}
		H(u_n)-H(u_{n+1}) \geq (1-\lambda)\left|\left|u_{n+1}-u_n\right|\right|^2_\V.
	\end{equation} 
\end{thm}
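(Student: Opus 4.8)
The statement has two logically independent halves: non-negativity of $H$ on $\V_{[0,1]}$, and the quantitative descent estimate \eqref{Hstep}, from which $H(u_{n+1})\le H(u_n)$ and its equality case follow. For non-negativity I would use the second expression for $H$ in \eqref{Lyap}: if $u\in\V_{[0,1]}$ then $\langle u,\mathbf{1}-u\rangle_\V=\sum_{i\in V}d_i^r\,u_i(1-u_i)\ge0$ since each $u_i\in[0,1]$, while $\langle u,(I-e^{-\tau\Delta})u\rangle_\V\ge0$ because $I-e^{-\tau\Delta}$ is self-adjoint with eigenvalues $1-e^{-\tau\mu_k}\ge0$ (the $\mu_k\ge0$ being the eigenvalues of $\Delta$), i.e. it is positive semi-definite; since $\lambda\ge0$, the weighted sum $H(u)$ is then non-negative.

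For \eqref{Hstep}, write $w:=u_{n+1}-u_n$, and note $\langle w,\mathbf{1}\rangle_\V=0$ by mass conservation, and that $u_n\in X:=\V_{[0,1]}\cap S_M$ with $M:=\mathcal{M}(u_0)$ (since $\mathcal{B}(u_n)\neq\emptyset$ forces $u_n\in\V_{[0,1]}$, and $\mathcal{M}(u_n)=\mathcal{M}(u_0)$ by the mass-conservation proposition). The plan rests on two ingredients. First, by Theorem~\ref{obsMMprop} the iterate $u_{n+1}$ minimises over the convex set $X$ the quadratic functional $\Phi(u):=\lambda\langle u,\mathbf{1}-u\rangle_\V+\|u-e^{-\tau\Delta}u_n\|_\V^2$, whose purely quadratic part is $(1-\lambda)\|u\|_\V^2$ with $1-\lambda\ge0$; here I would invoke the standard fact that if $\Phi=c\|\cdot\|_\V^2+(\text{affine})$ with $c\ge0$ is minimised over a convex set at $u_{n+1}$, then $\Phi(v)-\Phi(u_{n+1})\ge c\|v-u_{n+1}\|_\V^2$ for every $v$ in that set, which applied with $v=u_n\in X$ and $c=1-\lambda$ sharpens $\Phi(u_{n+1})\le\Phi(u_n)$ to $\Phi(u_n)-\Phi(u_{n+1})\ge(1-\lambda)\|w\|_\V^2$. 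Second, expanding the quadratic forms (and using that $e^{-\tau\Delta}$ is self-adjoint, $e^{-\tau\Delta}\mathbf{1}=\mathbf{1}$, and $\langle w,\mathbf{1}\rangle_\V=0$) one checks that $H-\Phi$ is the map $u\mapsto-\langle u,e^{-\tau\Delta}u\rangle_\V+2\langle u,e^{-\tau\Delta}u_n\rangle_\V-\|e^{-\tau\Delta}u_n\|_\V^2$, whose increment from $u_n$ to $u_{n+1}$ equals exactly $-\langle w,e^{-\tau\Delta}w\rangle_\V$; hence
\[ H(u_n)-H(u_{n+1})=\big(\Phi(u_n)-\Phi(u_{n+1})\big)+\langle w,e^{-\tau\Delta}w\rangle_\V . \]

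Combining the two ingredients, and using that $e^{-\tau\Delta}$ is positive semi-definite, gives $H(u_n)-H(u_{n+1})\ge(1-\lambda)\|w\|_\V^2+\langle w,e^{-\tau\Delta}w\rangle_\V\ge(1-\lambda)\|w\|_\V^2$, which is \eqref{Hstep}; in particular $H$ is non-increasing along the scheme. For the equality case, $H(u_n)=H(u_{n+1})$ forces $\langle w,e^{-\tau\Delta}w\rangle_\V=0$, and since the eigenvalues $e^{-\tau\mu_k}$ of $e^{-\tau\Delta}$ are strictly positive this already gives $w=\mathbf{0}$, i.e. $u_{n+1}=u_n$; the converse is immediate. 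Reading the equality case off the $\langle w,e^{-\tau\Delta}w\rangle_\V$ term rather than the $(1-\lambda)\|w\|_\V^2$ term is what lets the argument treat $\lambda=1$ (the MBO case) uniformly.

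I expect the only non-routine step to be the quantitative first-order optimality estimate, i.e. extracting the factor $(1-\lambda)$ rather than settling for bare monotonicity of $\Phi$: this is standard but genuinely load-bearing, and it is the one place where the constraint $\lambda\le1$ (equivalently $\tau\le\varepsilon$) is used. Everything else is the algebra of expanding $\|\cdot\|_\V^2$ and $\langle\cdot,e^{-\tau\Delta}\cdot\rangle_\V$, discarding the constant $\langle u,\mathbf{1}\rangle_\V=M$ terms on $X$, and keeping track of signs, all controlled by $\mu_k\ge0$ and $0\le\lambda\le1$. Equivalently, and closer to the style of \cite{Budd}, one can run the same computation through the concave functional $J$ and its first variation $L_{u_n}$, via the exact identity $J(u_{n+1})=J(u_n)+L_{u_n}(w)-\langle w,e^{-\tau\Delta}w\rangle_\V$ together with the observation that $u_{n+1}$ minimises $L_{u_n}(u)-(1-\lambda)\langle u,\mathbf{1}-u\rangle_\V$ over $X$, which is \eqref{mACobsMM} up to an additive constant.
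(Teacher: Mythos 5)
Your proof is correct, and both halves check out: the non-negativity argument is identical to the paper's, and the algebra in the descent estimate is right (I verified that $H-\Phi$ has increment exactly $-\langle w,e^{-\tau\Delta}w\rangle_\V$ and that the strong-convexity bound $\Phi(u_n)-\Phi(u_{n+1})\ge(1-\lambda)\|w\|_\V^2$ follows from first-order optimality of $u_{n+1}$ over the convex set $X$). The route differs from the paper's in packaging rather than substance: the paper works with the concave functional $J$ and its first variation $L_{u_n}$, substitutes the semi-discrete equation \eqref{mSDobs} directly into $L_{u_n}(u_n-u_{n+1})$, and closes with $\ip{u_n-u_{n+1},\beta_{n+1}}\ge 0$; you instead invoke the variational characterisation of Theorem \ref{obsMMprop} as a black box and apply a generic quadratic-minimisation estimate. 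These are the same computation in disguise — the displayed identity in the proof of Theorem \ref{obsMMprop} is precisely $\Phi(\eta)-\Phi(u_{n+1})=2\lambda\ip{\eta-u_{n+1},\beta_{n+1}}+(1-\lambda)\|\eta-u_{n+1}\|_\V^2$, which is your ``standard fact'' with the multiplier term made explicit — so you could cite that display with $\eta=u_n$ instead of re-deriving the estimate. What your packaging buys is a cleaner equality case: you read $w=\mathbf{0}$ off positive-definiteness of $e^{-\tau\Delta}$, whereas the paper appeals to strict concavity of $J$; these are the same fact, but your version makes the uniform treatment of $\lambda=1$ more transparent. One small caution: the ``standard fact'' as you state it needs the first-order optimality condition for a differentiable convex function minimised over a convex set, which is fine here since $1-\lambda\ge 0$ makes $\Phi$ convex; were you to spell this out you would essentially reproduce the paper's computation.
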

\begin{proof}
	Note that $I-e^{-\tau\Delta}$ has eigenvalues $1- e^{-\tau\lambda_k}\geq 0$, since the eigenvalues $\lambda_k$ of $\Delta$ are non-negative, and so $\left\langle u,\left(I-e^{-\tau\Delta}\right)u\right\rangle_\V
\geq 0$. As $u\in\V_{[0,1]}$, $H(u)\geq 0$ follows. 
	
	Next by the concavity of $J$ and linearity of $L_{u_n}$, recalling that $ \ip{u_{n} - u_{n+1},\mathbf{1}}=0$:
	\[\begin{split}
	H(u_{n}) - &H(u_{n+1}) = J(u_{n}) - J(u_{n+1}) + (1-\lambda)\ip{u_{n+1},\mathbf{1}-u_{n+1}} - (1-\lambda)\ip{u_n,\mathbf{1}-u_n} \\
	&\geq L_{u_n}(u_{n} - u_{n+1}) - (1-\lambda)\ip{u_{n+1},u_{n+1}} + (1-\lambda)\ip{u_n,u_n}\\
	&= \ip{u_{n} - u_{n+1},\mathbf{1}-2e^{-\tau\Delta}u_n} - (1-\lambda)\ip{u_{n+1},u_{n+1}} + (1-\lambda)\ip{u_n,u_n}\\
	&= \ip{u_{n} - u_{n+1},-2e^{-\tau\Delta}u_n + (1-\lambda)(u_{n+1}+u_n)}\\
	&= \ip{u_{n} - u_{n+1},2 (1-\lambda)u_{n+1}-2e^{-\tau\Delta}u_n + (1-\lambda)(u_n-u_{n+1})}\\
	&=\left\langle u_{n} - u_{n+1},2\lambda\beta_{n+1}-2\lambda\overline{u_{n+1}}\mathbf{1} -2\lambda\overline{\beta_{n+1}}  \mathbf{1}+ (1-\lambda)(u_n-u_{n+1})\right\rangle_\V \text{ by \eqref{mSDobs}}\\
	&=\ip{u_{n} - u_{n+1},2\lambda\beta_{n+1}}+(1-\lambda)\left|\left|u_{n+1}-u_n\right|\right|^2_\V \\
	&\geq (1-\lambda)\left|\left|u_{n+1}-u_n\right|\right|^2_\V
	\end{split}
	\]
where the final line follows from $\beta_{n+1}\in\mathcal{B}(u_{n+1})$ as in the proof of Theorem \ref{obsMMprop}. Note that if $u_{n+1}\neq u_n$ then $J(u_{n}) - J(u_{n+1}) > L_{u_n}(u_{n} - u_{n+1})$ by strict concavity, so even for $\lambda=1$ there is equality if and only if $u_{n+1}= u_n$.
\end{proof}
\begin{cor}[Cf. $\text{\cite[Lemma 5.18]{OKMBO}}$]\label{cor0}
	Recall from Definition \ref{Sdef} the notation $S_{\tau,u_n}$ for the set of valid MBO updates of $u_n$, i.e. the set of solutions to \eqref{mMBO}. For $\lambda = 1$, if an MBO sequence $u_n$ defined by \eqref{mMBO} satisfies either\emph{:}
\begin{enumerate}[\em (i)]
\item for eventually all $n$, $u_{n+1}\in \Ext S_{\tau,u_n}$, or
\item for eventually all $n$, $u_{n+1}$ is as in \eqref{usolnlim} \emph{(}i.e. the $\lambda\uparrow 1$ limit of the semi-discrete updates $u^\lambda_{n+1}$%of $u_n$
\emph{)},
\end{enumerate}
then there exists $u\in X$ such that for eventually all $n$, $u_n=u$.
\end{cor}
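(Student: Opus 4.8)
The plan is to combine the Lyapunov functional $H$ of Theorem~\ref{Lyapthm} (for $\lambda=1$, where $H(u)=J(u)=\ip{\mathbf{1}-u,e^{-\tau\Delta}u}$) with a finiteness argument on the tail of the sequence. First I would record that any MBO sequence defined by \eqref{mMBO} is, up to the choice of $\beta_{n+1}\in\mathcal{B}(u_{n+1})$, a sequence generated by \eqref{mSDobs} with $\tau=\varepsilon$: each $u_{n+1}$ solves \eqref{mMBO}, so Theorem~\ref{obsMMconverse} produces a $\beta_{n+1}$ making $(u_{n+1},\beta_{n+1})$ a solution of \eqref{mSDobs}. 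Hence Theorem~\ref{Lyapthm} applies to $(u_n)_{n\ge 0}$: the sequence $n\mapsto H(u_n)$ is non-increasing and non-negative, so it converges; and, by the equality clause of that theorem (which for $\lambda=1$ uses the strict concavity of $J$), $H(u_{n+1})=H(u_n)$ forces $u_{n+1}=u_n$.

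The core of the argument is to show that under either hypothesis the tail $\{u_n : n\ge N\}$ lies in a finite set. In case (i): for all large $n$, $u_{n+1}\in\Ext S_{\tau,u_n}=S_{\tau,u_n}\cap\Ext X\subseteq\Ext X$ by Proposition~\ref{extSprop}, and $\Ext X$ is finite by Proposition~\ref{extprop}. In case (ii): for all large $n$, $u_{n+1}$ equals the right-hand side of \eqref{usolnlim}. I would unpack this: writing $D:=\{i\in V \mid (e^{-\tau\Delta}u_n)_i=\alpha_k\}$ and $E:=\{i\in V \mid (e^{-\tau\Delta}u_n)_i\ge\alpha_{k+1}\}$, and noting that $a_{u_n,\tau,\alpha_k}=\mathcal{M}(\chi_D)$ and $\sum_{l=k+1}^K a_{u_n,\tau,\alpha_l}=\mathcal{M}(\chi_E)$, one gets $u_{n+1}=\theta\chi_D+\chi_E$ with $\theta=(M-\mathcal{M}(\chi_E))/\mathcal{M}(\chi_D)$. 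Thus $u_{n+1}$ is determined entirely by the pair of disjoint subsets $(D,E)$ of $V$ together with the fixed mass $M$; since $V$ is finite there are only finitely many such pairs, so the possible values of $u_{n+1}$ form a finite set, and again $\{u_n : n\ge N\}$ is finite for some $N$.

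With finiteness in hand the conclusion is immediate: $\{H(u_n) : n\ge N\}$ is a finite subset of $\mathbb{R}$, and a convergent real sequence that eventually takes values in a finite set is eventually constant; so $H(u_{n+1})=H(u_n)$ for all $n$ beyond some $M\ge N$. By the equality clause of Theorem~\ref{Lyapthm}, $u_{n+1}=u_n$ for all $n\ge M$, i.e. $u_n=u_M$ for all $n\ge M$; and $u_M\in X$ since it solves \eqref{mMBO}.

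I expect the main obstacle to be the finiteness claim in case (ii). In the classical MBO setting iterates are characteristic functions of subsets of $V$, so the state space is visibly finite; here the mass constraint introduces an adjustable fractional ``threshold layer'' value $\theta$, and one must check that this $\theta$ cannot drift continuously along the orbit. It cannot, precisely because — as the unpacking above shows — it is pinned down by the pair $(D,E)$ and the conserved mass $M$. The remaining ingredients (monotonicity and non-negativity of $H$, the equality characterisation, and the finiteness of $\Ext X$) are already supplied by Theorem~\ref{Lyapthm} and Propositions~\ref{extprop}--\ref{extSprop}.
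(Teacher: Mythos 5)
Your proposal is correct and follows essentially the same route as the paper: reduce to the semi-discrete scheme so that the Lyapunov functional $H$ applies, show the tail of the orbit lies in a finite set (via $\Ext X$ finite in case (i), and via the tripartition of $V$ determined by the threshold level in case (ii)), and then invoke the equality clause of Theorem~\ref{Lyapthm}. The only cosmetic difference is the last step — you note that a monotone sequence taking finitely many values stabilises, whereas the paper argues by contradiction with two values recurring infinitely often — but these are the same argument in substance.
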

\begin{proof} For (i), recall that $\Ext S_{\tau,u_n} = S_{\tau,u_n}\cap \Ext X \subseteq \Ext X$ and that $\Ext X$ is a finite set. Hence $\{ u_n\mid n\in\mathbb{N}\}$ is a finite set, so if the $u_n$ are not eventually a single $u$ then we must have some $u, v\in X$ such that $u\neq v$, $u_n = u$  infinitely often, and $u_n = v$ infinitely often. Therefore we must have $n<m<k$ such that $u_n = u_k = u$ and $u_m = v$, and hence 
\[H(u)\geq H(u_{n+1}) \geq ... \geq H(u_{m-1}) \geq H(v) \geq H(u_{m+1}) \geq ... \geq H(u_{k-1}) \geq H(u).\]
All the inequalities are equalities, and therefore by the equality condition on $H$ from Theorem~\ref{Lyapthm} we have $u=v$, a contradiction. Thus the $u_n$ are eventually constant.

For (ii), we show that there are finitely many possible $u\in X$ of the form \eqref{usolnlim}. Each such $u$ has the form
\[ u = \frac{M-\mathcal{M}(\chi_{V_3})}{\mathcal{M}(\chi_{V_2})} \chi_{V_2}+\chi_{V_3} \]
for a partition $V = V_1 \cup V_2 \cup V_3$ with $0\leq M - \mathcal{M}(\chi_{V_3}) \leq \mathcal{M}(\chi_{V_2})$.
To see this, note that $u$ as in \eqref{usolnlim} has $V_1 = \{ i \mid (e^{-\tau\Delta}u_n)_i <\alpha_k \}$, $V_2 = \{ i \mid (e^{-\tau\Delta}u_n)_i =\alpha_k \}$ and $V_3 = \{ i \mid (e^{-\tau\Delta}u_n)_i >\alpha_k \}$.
But since $V$ is finite, there are only finitely many tripartitions of $V$. Hence $\{ u_n\mid n\in\mathbb{N}\}$ is a finite set, and the proof runs as above.
\end{proof}
\begin{cor}[Cf. \text{\cite[Corollary 15]{Budd}}]\label{cor1}
	If $\lambda \in(0,1)$ and the sequence $u_n$ obeys \eqref{mSDobs}, then \[\sum_{n=0}^\infty \left|\left|u_{n+1}-u_n\right|\right|^2_\V < \infty \] and therefore in particular \[\lim_{n\rightarrow\infty} \left|\left|u_{n+1}-u_n\right|\right|_\V = 0. \]
\end{cor}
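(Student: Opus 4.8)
The plan is a standard telescoping argument on the Lyapunov functional $H$. The key input is the one-step estimate \eqref{Hstep} from Theorem \ref{Lyapthm}, which for $\lambda\in(0,1)$ reads $(1-\lambda)||u_{n+1}-u_n||_\V^2 \le H(u_n) - H(u_{n+1})$, with the coefficient $1-\lambda>0$.

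First I would sum this inequality over $n=0,1,\dots,N$. The right-hand side telescopes to $H(u_0)-H(u_{N+1})$, so
\[ (1-\lambda)\sum_{n=0}^N ||u_{n+1}-u_n||_\V^2 \le H(u_0)-H(u_{N+1}) \le H(u_0), \]
where the last inequality uses the non-negativity of $H$ on $\V_{[0,1]}$ established in Theorem \ref{Lyapthm} (each $u_n$ indeed lies in $\V_{[0,1]}$ since $\mathcal{B}(u_n)\neq\emptyset$ by \eqref{mSDobs} and \eqref{oldbeta}). Since the initial condition satisfies $u_0\in\V_{[0,1]}$, the explicit form \eqref{Lyap} shows $H(u_0)<\infty$.

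Dividing by $1-\lambda>0$ gives $\sum_{n=0}^N ||u_{n+1}-u_n||_\V^2 \le H(u_0)/(1-\lambda)$ for every $N$. As the partial sums are non-decreasing in $N$ and uniformly bounded above, the series converges, i.e. $\sum_{n=0}^\infty ||u_{n+1}-u_n||_\V^2 < \infty$; the conclusion $\lim_{n\to\infty}||u_{n+1}-u_n||_\V = 0$ then follows because the general term of a convergent series tends to zero.

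I do not expect a genuine obstacle here: the argument is purely the summability of a telescoping sequence of non-negative increments. The only point where the hypothesis $\lambda<1$ (rather than $\lambda\le 1$) is essential is the final division by $1-\lambda$; when $\lambda=1$ the estimate \eqref{Hstep} becomes vacuous, which is consistent with the corollary being stated only for $\lambda\in(0,1)$.
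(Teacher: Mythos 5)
Your proposal is correct and is essentially identical to the paper's proof: both sum the one-step estimate \eqref{Hstep}, telescope the right-hand side, bound it by $H(u_0)$ using the non-negativity of $H$, and divide by $1-\lambda>0$ to conclude summability. No gaps.
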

\begin{proof}
	%If $\lambda=1$ the result follows directly from Corollary \ref{cor0}.%Theorem \ref{obsMMprop} and the fact that MBO trajectories are eventually constant \cite[Thm.]{OKMBO}.
%If $\lambda<1$ then 
By the non-negativity of $H$ and \eqref{Hstep} we have 
	\[(1-\lambda)\sum_{n=0}^N \left|\left|u_{n+1}-u_n\right|\right|^2_\V \leq H(u_0) -H(u_{N+1}) \leq H(u_0)\] so the result follows by taking $N\rightarrow\infty$.
\end{proof}
We wish to use the gradient of $H$ to investigate critical points of the flow. However as we restrict the flow to lie in $S_M$, a non-Hilbert space, we make the following definition. 
\begin{mydef}\label{restrictgrad}
Let $H_0$ be a Hilbert space, and $H_1\subseteq H_0$ be a closed subspace. Let $\tilde H := x + H_1$ for some $x\in H_0$. Then for any  Fr\'{e}chet differentiable map $f:H_0\rightarrow\mathbb{R}$ with Fr\'{e}chet derivative $Df$, we define the Fr\'{e}chet derivative of $f|_{\tilde H}$ at $u\in \tilde H$ by \[ Df|_{\tilde H}(u) := Df(u)|_{H_1} \] where the restriction of the argument to $H_1$ ensures that the $u+h$ terms that appear in the definition of the Fr\'{e}chet derivative satisfy $u+h\in \tilde H$. Then we define the gradient
\[\nabla_{\tilde H} f|_{\tilde H}(u) \in H_1 \] 
to be the Riesz representative of $Df|_{\tilde H}(u)$, i.e. the unique element of $H_1$ such that 
\[\forall v\in H_1 \:\: \langle \nabla_{\tilde H} f|_{\tilde H}(u),v\rangle = Df|_{\tilde H}(u)(v) =Df(u)(v).\] 
Note therefore that for $u\in \tilde H$, since $\nabla_{H_0} f(u)$ is the Riesz representative of $Df(u)$, \[\forall v\in H_1 \:\: \left\langle \nabla_{H_0} f(u),v\right\rangle=\left\langle \nabla_{\tilde H} f|_{\tilde H}(u),v\right\rangle\] and so $\nabla_{\tilde H} f|_{\tilde H}(u) - \nabla_{H_0} f(u) \bot H_1$. That is, for $u\in \tilde H$, $\nabla_{\tilde H} f|_{\tilde H}(u)$ is the orthogonal projection of $\nabla_{H_0} f(u)$ onto $H_1$.
\end{mydef}
\begin{prop}[Cf. \text{\cite[Proposition 16]{Budd}}]
Suppose $M\in (0,\mathcal{M}(\mathbf{1}))$. The Lyapunov functional has gradient \emph{(}for $u\in\V_{(0,1)}\cap X$\emph{)}
\be\label{Hgrad}
\nabla_{S_M} H|_{S_M}(u) = 2(u-e^{-\tau\Delta}u) -2\lambda u + 2\lambda\bar u\mathbf{1} 
\ee
and therefore\emph{:} \begin{enumerate}[\em i.]\item For $u_{n+1}\in\V_{(0,1)}\cap X$ obeying  \eqref{mSDobs}
%\be\begin{split}\label{Hgrad2}
%\nabla_{\V\cap X} H(u_{n+1}) &= 2(u_{n+1}-e^{-\tau\Delta}u_n) -2e^{-\tau\Delta}(u_{n+1}-u_n) -2\lambda u_{n+1} + 2\lambda\frac{\ip{u_{n+1},\mathbf{1} }}{\ip{\mathbf{1} ,\mathbf{1} }}\mathbf{1}\\&=2\lambda\beta_{n+1} -2\lambda\frac{\ip{\beta_{n+1},\mathbf{1} }}{\ip{\mathbf{1} ,\mathbf{1} }}  \mathbf{1} - 2e^{-\tau\Delta}(u_{n+1}-u_n)\\&=- 2e^{-\tau\Delta}(u_{n+1}-u_n).\end{split}
%\ee
\begin{equation}\label{Hgrad2}
	\nabla_{S_M} H|_{S_M}(u_{n+1}) = - 2e^{-\tau\Delta}(u_{n+1}-u_n).
\end{equation}
\item Define $\mathscr{E}$ to be the eigenspace of $\Delta$ with eigenvalue $-\tau^{-1}\log(1-\lambda)$, or $\{\mathbf{0}\}$ if there is no such eigenvalue. If $u\in\V_{(0,1)}\cap X$ then $\nabla_{S_M} H|_{S_M}(u) =\mathbf{0}$ \emph{(}i.e. $u$ is a critical point of $H$\emph{)} if and only if $u \in\left( \frac{M}{\ip{\mathbf{1},\mathbf{1}}}\mathbf{1}+ \mathscr{E}\right)\cap\V_{(0,1)}$.
\end{enumerate}
\end{prop}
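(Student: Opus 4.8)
The plan is to compute the unrestricted Hilbert-space gradient $\nabla_\V H$ on $\V$ and then project it onto the tangent space of $S_M$, exactly as prescribed by Definition~\ref{restrictgrad}. Although $H$ is introduced on $\V_{[0,1]}$, its formula \eqref{Lyap} extends to a polynomial, hence $C^\infty$, functional on all of $\V$; writing it as $H(u) = \lambda\ip{u,\mathbf{1}} + (1-\lambda)\|u\|_\V^2 - \ip{u,e^{-\tau\Delta}u}$ and differentiating term by term, using that $e^{-\tau\Delta}$ is self-adjoint on $\V$, yields $\nabla_\V H(u) = \lambda\mathbf{1} + 2(1-\lambda)u - 2e^{-\tau\Delta}u$. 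By Definition~\ref{restrictgrad}, at $u\in S_M$ the restricted gradient $\nabla_{S_M}H|_{S_M}(u)$ is the orthogonal projection of $\nabla_\V H(u)$ onto $\{v\in\V\mid\ip{v,\mathbf{1}}=0\}$, i.e.\ $\nabla_\V H(u) - \overline{\nabla_\V H(u)}\,\mathbf{1}$. Since $e^{-\tau\Delta}$ conserves mass and $\ip{u,\mathbf{1}}=M$ for $u\in X$, one computes $\ip{\nabla_\V H(u),\mathbf{1}} = \lambda\ip{\mathbf{1},\mathbf{1}} - 2\lambda M$, hence $\overline{\nabla_\V H(u)} = \lambda - 2\lambda\bar u$; subtracting $(\lambda-2\lambda\bar u)\mathbf{1}$ and collecting terms gives \eqref{Hgrad}.

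For the first claim I would note that $u_{n+1}\in\V_{(0,1)}$ forces $\beta_{n+1}=\mathbf{0}$, since by \eqref{oldbeta} the set $\mathcal{B}$ of a point all of whose coordinates lie in $(0,1)$ equals $\{\mathbf{0}\}$; so \eqref{mSDobs} reduces to $u_{n+1} - \lambda u_{n+1} + \lambda\bar u\mathbf{1} = e^{-\tau\Delta}u_n$, where $\overline{u_{n+1}}=\bar u$ by mass conservation. Evaluating \eqref{Hgrad} at $u_{n+1}$ and substituting this identity gives
\[ \nabla_{S_M}H|_{S_M}(u_{n+1}) = 2\left(u_{n+1}-\lambda u_{n+1}+\lambda\bar u\mathbf{1}\right) - 2e^{-\tau\Delta}u_{n+1} = 2e^{-\tau\Delta}u_n - 2e^{-\tau\Delta}u_{n+1}, \]
which is exactly \eqref{Hgrad2}.

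For the second claim I would set \eqref{Hgrad} equal to $\mathbf{0}$, obtaining $u - e^{-\tau\Delta}u - \lambda u + \lambda\bar u\mathbf{1} = \mathbf{0}$, and substitute $w:=u-\bar u\mathbf{1}$; since $e^{-\tau\Delta}\mathbf{1}=\mathbf{1}$, this collapses to $w - e^{-\tau\Delta}w - \lambda w = \mathbf{0}$, i.e.\ $e^{-\tau\Delta}w = (1-\lambda)w$. As $\Delta$ and $e^{-\tau\Delta}$ are simultaneously diagonalisable, with $e^{-\tau\Delta}$ acting as $e^{-\tau\mu}$ on the $\mu$-eigenspace of $\Delta$, and as $e^{-\tau\Delta}$ has no zero eigenvalue, this equation holds exactly when $w$ lies in the $\Delta$-eigenspace for eigenvalue $-\tau^{-1}\log(1-\lambda)$ if $\lambda<1$, and forces $w=\mathbf{0}$ if $\lambda=1$; in either case $w\in\mathscr{E}$. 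Thus $\nabla_{S_M}H|_{S_M}(u)=\mathbf{0}$ iff $u-\bar u\mathbf{1}\in\mathscr{E}$, i.e.\ $u\in\frac{M}{\ip{\mathbf{1},\mathbf{1}}}\mathbf{1}+\mathscr{E}$, and imposing the hypothesis $u\in\V_{(0,1)}$ gives the stated characterisation; the converse direction simply runs this chain of equivalences backwards, using $u\in X$ to identify the base point $\bar u\mathbf{1}$ (which is automatic when $\lambda>0$, since then $\mathscr{E}\perp\mathbf{1}$).

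I do not anticipate a serious obstacle here: the argument is largely careful bookkeeping built on the gradient formula \eqref{Hgrad}. The two places that need attention are getting the additive constant right in the projection step that produces \eqref{Hgrad}, and, in the second claim, correctly treating the endpoint $\lambda=1$ — where $-\tau^{-1}\log(1-\lambda)$ is undefined and $\mathscr{E}=\{\mathbf{0}\}$ by the stated convention, so that $\bar u\mathbf{1}$ is then the only interior critical point.
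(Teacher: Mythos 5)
Your proposal is correct and follows essentially the same route as the paper: compute $\nabla_\V H(u) = \lambda\mathbf{1} + 2(1-\lambda)u - 2e^{-\tau\Delta}u$, project onto $\{\mathbf{1}\}^\bot$ via Definition~\ref{restrictgrad} to get \eqref{Hgrad}, use $\beta_{n+1}=\mathbf{0}$ and \eqref{mSDobs} for part i, and diagonalise for part ii. Your substitution $w = u - \bar u\mathbf{1}$ reducing the critical-point equation to $e^{-\tau\Delta}w=(1-\lambda)w$ is just a rephrasing of the paper's operator $B := 2e^{-\tau\Delta}+2(\lambda-1)I-2\lambda A$ and its eigendecomposition, so there is nothing substantive to distinguish the two arguments.
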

\begin{proof}
It is straightforward to check that \[ \ip{\nabla_\V H(u),v}:= \lim_{t\rightarrow0}\frac{ H(u+tv) - H(u)}{t}   = \ip{1-2e^{-\tau\Delta}u,v}+(\lambda-1)\ip{1-2u,v} \] and therefore \[\nabla_\V H(u) =1-2e^{-\tau\Delta}u + (\lambda-1)(1-2u)= \lambda - 2e^{-\tau\Delta}u +2(1-\lambda)u.\] Restricting to $S_M = u + \{\mathbf{1}\}^\bot$, by definition $\nabla_{S_M} H|_{S_M}(u)\in \{\mathbf{1}\}^\bot$ and $\nabla_{S_M} H|_{S_M}(u)-\nabla_{\V} H(u)\in \operatorname{span}\{\mathbf{1}\}$. Thus $\nabla_{S_M} H|_{S_M}(u) = \nabla_{\V} H(u) - \overline{\nabla_{\V} H(u)}\mathbf{1} $, yielding \eqref{Hgrad}.
\begin{enumerate}[i. ]
\item Since $u_{n+1}\in \V_{(0,1)}$ we have  $\beta_{n+1}=\mathbf{0}$, so from \eqref{mSDobs} we have \[u_{n+1} -e^{-\tau\Delta}u_n-\lambda u_{n+1}+\lambda\overline{u_{n+1}}\mathbf{1} =\lambda\beta_{n+1} -\lambda\overline{\beta_{n+1}}  \mathbf{1} = \mathbf{0}\]
 and \eqref{Hgrad2} follows by substituting $u_{n+1}$ into \eqref{Hgrad} and subtracting twice the above expression.
\item Let $A:v\mapsto \bar v\mathbf{1}$ and define $B := 2e^{-\tau\Delta} +2(\lambda-1)I-2\lambda A$. Then $\nabla_{S_M} H|_{S_M}(u) =\mathbf{0}$ if and only if $Bu=\mathbf{0}$. Note that $B\mathbf{1} =2\mathbf{1} +2\lambda\mathbf{1}-2\mathbf{1}-2\lambda\mathbf{1}=\mathbf{0}$ so $u = \frac{M}{\ip{\mathbf{1},\mathbf{1}}}\mathbf{1}\in X$ is a solution. Taking $(\xi_k)_{k>0}$ the eigenvectors for $\Delta$ (with eigenvalues $\lambda_k$) as a basis for $\{\mathbf{1}\}^\bot$  we get that (recalling that $\xi_k \bot  \mathbf{1}$ for $k>0$ and $\lambda_k>0$ for $k>0$) \[B\xi_k = 2(e^{-\tau\lambda_k}+\lambda-1)\xi_k = \mathbf{0}\text{ if and only if }\xi_k\in\mathscr{E}.\] Thus $Bu = \mathbf{0}$ if and only if
\[B\left(u-\frac{M}{\ip{\mathbf{1} ,\mathbf{1} }}\mathbf{1} \right) = \mathbf{0}\] if and only if \[ u-\frac{M}{\ip{\mathbf{1} ,\mathbf{1} }}\mathbf{1} \in\mathscr{E}\] as desired.
\end{enumerate}
\vspace*{-2em}\end{proof}
%[[BEGIN REWRITE]]
\begin{nb}  
We identify when the above identified critical points are all global maximisers. Considering the quadratic terms we observe that for $\eta \bot \mathbf{1}$ \emph{(}recalling $\bar u=M/\ip{\mathbf{1},\mathbf{1}}$\emph{)} \[ \begin{split} H\left(\bar u\mathbf{1}+\eta\right) &=\lambda\left\langle\bar u\mathbf{1}+\eta,\mathbf{1}-\bar u\mathbf{1}-\eta\right\rangle_\V +\left\langle \bar u\mathbf{1}+\eta,\left(I-e^{-\tau\Delta}\right)\left(\bar u\mathbf{1}+\eta\right)\right\rangle_\V\\
	&= H\left(\bar u\mathbf{1}\right) - \left(\ip{\eta,e^{-\tau\Delta}\eta}-(1-\lambda)\ip{\eta,\eta}\right)
 \end{split}\] so, for $\lambda\leq 1$, $u= \frac{M}{\ip{\mathbf{1},\mathbf{1}}}\mathbf{1}$ is a global maximiser of $H$ in $S_M$ if and only if \[P: = e^{-\tau\Delta} -(1-\lambda)I\] is positive semi-definite on $S_M$, i.e. for $(\gamma_k)_{k>0}$ the eigenvalues of $\Delta$, we desire that\emph{:} \[ e^{-\tau\gamma_k} -(1-\lambda)\geq 0, \text{ i.e. }\tau \varepsilon^{-1} \geq 1 - e^{-\tau\gamma_k}  .\] Therefore we have for $\lambda\leq 1$ that  $u= \frac{M}{\ip{\mathbf{1},\mathbf{1}}}\mathbf{1}$ is a global maximiser of $H$ if and only if \[\varepsilon\in \left[\tau,\frac{\tau}{1-e^{-\tau||\Delta||}}\right].\]
Furthermore, note that $\mathscr{E} = \operatorname{ker} P$ so in that case $ \frac{M}{\ip{\mathbf{1},\mathbf{1}}}\mathbf{1} +\mathscr{E}$ are all global maxima since $H(\frac{M}{\ip{\mathbf{1},\mathbf{1}}}\mathbf{1} +\eta)= H(\frac{M}{\ip{\mathbf{1},\mathbf{1}}}\mathbf{1})$ for $\eta \in\operatorname{ker}P$.
\end{nb}
Since $H(u_n)$ is monotonically decreasing and bounded below, it follows that $H(u_n)\downarrow H_\infty$ for some $H_\infty\geq 0$. Furthermore, since the sequence $u_n$ is contained in $X$ and $X$ is compact, there exists a subsequence $u_{n_k}$ that converges to some $u^*\in X$ with $H(u^*)=H_\infty$, since $H$ is continuous. Unfortunately, just like \cite{LB2016} for graph AC flow with the standard quartic potential, or \cite{Budd} for AC flow with the double-obstacle potential, we are unable to infer convergence of the whole sequence from these facts. However by the same argument as in \cite[Lemma 5]{LB2016} if the set of accumulation points of the $u_n$ is finite then there is in fact only one such point and the whole sequence converges. Notably, if $u^*\in\V_{(0,1)}\cap X$ is an accumulation point of the $u_n$ then by Corollary~\ref{cor1} and \eqref{Hgrad2} we have that $\nabla_{S_M} H|_{S_M}(u^*) = \mathbf{0}$. Thus, if $H(u_0) < H(\frac{M}{\ip{\mathbf{1},\mathbf{1}}}\mathbf{1})$ then no accumulation points of the $u_n$ lie in $\V_{(0,1)}\cap X$.

%[[END REWRITE]]
\section{Convergence of the semi-discrete scheme}\label{SDconvsec}

We follow the method of \cite{Budd} to prove convergence of the semi-discrete iterates to the solution of the continuous-time flow \eqref{mACobs} for $T=[0,\infty)$ and $u(0)=u_0\in\V_{[0,1]}$. Note that for $\overline{u_0} \in\{0,1\}$ this result is trivial, since the semi-discrete scheme has $u_n\equiv u_0$ and \eqref{mACobs} has $u(t)\equiv u_0$. Therefore, for the rest of this section we shall assume $\bar u = \overline{u_0} \in (0,1)$.
\subsection{Asymptotics of the $n^\text{th}$ semi-discrete iterate}
We first note two important controls.
\begin{lem}\label{betalem} For $0< \lambda \leq 1$, and $(u_{n+1},\beta_{n+1})$ solving \eqref{mSDobs} for given $u_n\in\V_{[0,1]}$, suppose $\bar u:=\overline{u_n}=\overline{u_{n+1}}\in(0,1)$. Then \be\label{betaavgcontrol}\beta_{n+1}-\overline{\beta_{n+1}}\mathbf{1}\in\V_{[\bar u-1,\bar u]}%\subseteq \V_{[-1,1]}
\ee 
and 
\be\label{betacontrol}\beta_{n+1}\in \V_{[-1,1]}.\ee
\end{lem}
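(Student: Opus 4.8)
The plan is to convert \eqref{mSDobs} into an explicit pointwise identity for $\xi := \beta_{n+1}-\overline{\beta_{n+1}}\mathbf{1}$ and then read both bounds off properties of $u_{n+1}$ and $e^{-\tau\Delta}u_n$ that are already in hand. Put $v := e^{-\tau\Delta}u_n$; since graph diffusion preserves $\V_{[0,1]}$ (nonnegative entries, $e^{-\tau\Delta}\mathbf{1}=\mathbf{1}$) we have $v\in\V_{[0,1]}$. Rearranging \eqref{mSDobs} and using $\overline{u_{n+1}}=\bar u$ yields
\[ \lambda\,\xi_i = (1-\lambda)(u_{n+1})_i - v_i + \lambda\bar u \qquad (i\in V), \]
so that \eqref{betaavgcontrol}, namely $\bar u-1\le\xi_i\le\bar u$, is equivalent to the pointwise estimate $v_i-\lambda\le(1-\lambda)(u_{n+1})_i\le v_i$ for every $i\in V$.

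To prove this estimate I would split on the value of $\lambda$. If $\lambda=1$ the identity above reads $\xi_i=\bar u-v_i$, and $v_i\in[0,1]$ settles it at once. If $0<\lambda<1$ then \eqref{mACobsMM} is strictly convex, so by Theorems~\ref{obsMMprop} and~\ref{lamble1soln} the solution $u_{n+1}$ is the unique one described by \eqref{usoln1} for some $\nu$, and since $\bar u\in(0,1)$ (so $0<\mathcal{M}(u_n)<\mathcal{M}(\mathbf{1})$, and $\tau>0$) Proposition~\ref{lambdaprop} gives $\nu\in[0,\lambda]$. Substituting each branch of \eqref{usoln1} into the identity: where $(u_{n+1})_i=0$ one has $0\le v_i\le\nu\le\lambda$ and $\xi_i=\bar u-v_i/\lambda$; where $(u_{n+1})_i=1$ one has $1-\lambda\le\nu+(1-\lambda)\le v_i\le1$ and $\xi_i=\bar u-1+(1-v_i)/\lambda$; and where $(u_{n+1})_i\in(0,1)$ one has $(1-\lambda)(u_{n+1})_i=v_i-\nu$, hence $\xi_i=\bar u-\nu/\lambda$. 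In each case $\xi_i\in[\bar u-1,\bar u]$, which is \eqref{betaavgcontrol}.

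For \eqref{betacontrol}, write $c:=-\overline{\beta_{n+1}}$, so that $\beta_{n+1}=\xi-c\mathbf{1}$; it then suffices to show $c\in[\bar u-1,\bar u]$, since together with \eqref{betaavgcontrol} this gives $\beta_{n+1,i}=\xi_i-c\in[-1,1]$. As $\beta_{n+1}\in\mathcal{B}(u_{n+1})$, we have $\xi_i\ge c$ wherever $(u_{n+1})_i=0$, $\xi_i=c$ wherever $(u_{n+1})_i\in(0,1)$, and $\xi_i\le c$ wherever $(u_{n+1})_i=1$. Since $\bar u\in(0,1)$, $u_{n+1}$ is neither $\mathbf{0}$ nor $\mathbf{1}$, so there is a vertex with $(u_{n+1})_i<1$, at which $c\le\xi_i\le\bar u$, and a vertex with $(u_{n+1})_i>0$, at which $c\ge\xi_i\ge\bar u-1$; hence $c\in[\bar u-1,\bar u]$ and \eqref{betacontrol} follows.

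The one genuinely delicate point is extracting, in the case $0<\lambda<1$, the sharp bounds on $v_i$ in each branch of \eqref{usoln1}, for which the localisation $\nu\in[0,\lambda]$ from Proposition~\ref{lambdaprop} is exactly what is needed; the remainder is bookkeeping. Note finally that using \eqref{usoln1} and Proposition~\ref{lambdaprop} here creates no circularity, as Lemma~\ref{betalem} is only invoked afterwards, in the convergence proof.
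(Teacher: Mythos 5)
Your proposal is correct and follows essentially the same route as the paper: for \eqref{betaavgcontrol} it reduces to the explicit solution formulas (the paper packages the same computation as \eqref{betasoln1} and \eqref{betasoln2}, which come from substituting \eqref{usoln1} into \eqref{mSDobs}) together with the localisation of $\nu$, and for \eqref{betacontrol} it combines the diameter bound from \eqref{betaavgcontrol} with the existence of a non-negative and a non-positive entry of $\beta_{n+1}$ forced by $\beta_{n+1}\in\mathcal{B}(u_{n+1})$ and $u_{n+1}\notin\{\mathbf{0},\mathbf{1}\}$.
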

\begin{proof}
 First, suppose $\lambda=1$. Then by \eqref{betasoln1}, \[(\beta_{n+1})_i -\overline{\beta_{n+1}} = \bar u-( e^{-\tau\Delta}u_n)_i \in [\bar u-1,\bar u].\]
Next, suppose $\lambda \in (0,1)$. Then by \eqref{betasoln2}, 
\[\begin{split}(\beta_{n+1})_i -\overline{\beta_{n+1}} &= \bar u +\frac{1}{\lambda}\begin{cases}-(e^{-\tau\Delta}u_n)_i, &\text{if} \:\: \nu\geq (e^{-\tau\Delta}u_n)_i, \\
		- \nu, &\text{if} \:\:(e^{-\tau\Delta}u_n)_i - (1-\lambda)<\nu<(e^{-\tau\Delta}u_n)_i,\\
		1-\lambda-(e^{-\tau\Delta}u_n)_i, &\text{if } \:\: \nu \leq (e^{-\tau\Delta}u_n)_i - (1-\lambda),\end{cases}\\
&=\bar u -1 +\frac{1}{\lambda}\begin{cases}\lambda-(e^{-\tau\Delta}u_n)_i, &\text{if} \:\: \nu\geq (e^{-\tau\Delta}u_n)_i, \\
		\lambda - \nu, &\text{if} \:\:(e^{-\tau\Delta}u_n)_i - (1-\lambda)<\nu<(e^{-\tau\Delta}u_n)_i,\\
		1-(e^{-\tau\Delta}u_n)_i, &\text{if } \:\: \nu \leq (e^{-\tau\Delta}u_n)_i - (1-\lambda),\end{cases}\end{split}\]
where we recall from Proposition \ref{lambdaprop} that $\nu\in(0,\lambda)$. It is therefore easy to check that in the first line the conditional term is non-positive, and in the second line the conditional term is non-negative. Therefore we deduce \eqref{betaavgcontrol}.

Consider the set $B := \{ (\beta_{n+1})_i\mid i\in V\}$. By \eqref{betaavgcontrol}, $B -\overline{\beta_{n+1}}\subseteq[\bar u-1,\bar u]$, so we have that $\operatorname{diam} B \leq 1$. Furthermore, $u_{n+1}\notin\{\mathbf{0,1}\}$, so since $\beta_{n+1}\in\mathcal{B}(u_{n+1})$ we have $x,y\in B$ such that $x\geq 0$ and $y\leq 0$. Therefore $B\subseteq[x-1,x+1]\cap[y-1,y+1]\subseteq[-1,1]$.
\end{proof}

Recall that the semi-discrete scheme is defined by \[(1-\lambda)u_{n+1} =e^{-\tau\Delta}u_n-\lambda \bar u \mathbf{1} +\lambda\beta_{n+1}-\lambda\overline{\beta_{n+1}}\mathbf{1}.
\]
%Approximating $1-\lambda = e^{-\lambda} +\bigO(\lambda^2)$, we get 
%\[u_{n+1} =e^\lambda e^{-\tau\Delta}u_n-\frac{\lambda}{2}e^\lambda\mathbf{1} +\lambda e^\lambda \beta_{n+1} + \bigO(\lambda^2).
%\]
Iterating this formula, we get the following formula for the $n^\text{th}$ term. 
\begin{prop}For $0<\lambda <1$, the semi-discrete solution has $n^\text{th}$ iterate \be u_n =\bar u\mathbf{1}+(1-\lambda)^{-n}e^{-n\tau\Delta}\left(u_0 - \bar u\mathbf{1}\right)+\frac{\lambda}{1-\lambda}\sum_{k=1}^n (1-\lambda)^{-(n-k)}e^{-(n-k)\tau\Delta}\left(\beta_k-\overline{\beta_{k}}\mathbf{1}\right)\ee
and, understanding $\bigO$ to refer to the simultaneous limit of $\tau\downarrow 0$ and $n\rightarrow \infty$ with $n\tau-t\in [0,\tau)$ for some fixed $t\geq 0$ and for fixed $\varepsilon>0$\footnote{Formally, we will say $f(\tau,n)=\bigO(\tau)$ if and only if $\operatorname{limsup} |f(\tau,n)/\tau| <\infty$ as $(\tau,n)\rightarrow(0,\infty)$ in $\{(\rho,m)\mid \rho > 0,\: m\rho-t\in[0,\rho)\}$ with the subspace topology induced by the standard topology on $ (0,\infty)\times\mathbb{N}$.}, we therefore have
\begin{equation}\label{SDiterate}
	u_n =\bar u\mathbf{1}+e^{n\lambda}e^{-n\tau\Delta}\left(u_0 - \bar u\mathbf{1}\right)+\lambda\sum_{k=1}^n e^{(n-k)\lambda}e^{-(n-k)\tau\Delta}\left(\beta_k-\overline{\beta_{k}}\mathbf{1}\right)+\bigO(\tau).
\end{equation}
\end{prop}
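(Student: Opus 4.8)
The plan is to prove the exact iteration formula by induction and then derive the asymptotic formula by expanding $(1-\lambda)^{-1}$ against $e^{\lambda}$ and controlling the resulting error uniformly along the joint limit $\tau\downarrow0$, $n\to\infty$.

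First I would substitute $v_n := u_n - \bar u\mathbf{1}$ into the scheme. Since the scheme conserves mass, $\bar u = \overline{u_0}$ is constant in $n$, and since $e^{-\tau\Delta}\mathbf{1} = \mathbf{1}$, the recursion stated just above the proposition collapses to
\[
(1-\lambda)v_{n+1} = e^{-\tau\Delta}v_n + \lambda\big(\beta_{n+1} - \overline{\beta_{n+1}}\mathbf{1}\big),
\]
i.e. $v_{n+1} = (1-\lambda)^{-1}e^{-\tau\Delta}v_n + \tfrac{\lambda}{1-\lambda}\big(\beta_{n+1}-\overline{\beta_{n+1}}\mathbf{1}\big)$. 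The claimed closed form for $v_n$ (equivalently for $u_n = \bar u\mathbf{1} + v_n$) then follows by an immediate induction on $n$ — the base case $n=0$ being the empty sum — using the semigroup identity $e^{-a\tau\Delta}e^{-b\tau\Delta}=e^{-(a+b)\tau\Delta}$ to combine the diffusion operators when the new term is fed through $(1-\lambda)^{-1}e^{-\tau\Delta}$.

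Second, for the asymptotic form I would use $-\log(1-\lambda) = \lambda + \bigO(\lambda^2)$. Along the prescribed limit $n\tau\in[t,t+\tau)$, so for fixed $\varepsilon>0$ the quantity $m\lambda = m\tau/\varepsilon$ is bounded uniformly over all $m\in\{0,1,\dots,n+1\}$; hence $-m\log(1-\lambda) = m\lambda + \bigO(m\lambda^2) = m\lambda + \bigO(\lambda) = m\lambda + \bigO(\tau)$ uniformly in such $m$, giving $(1-\lambda)^{-m} = e^{m\lambda}\big(1+\bigO(\tau)\big)$ uniformly, and therefore $(1-\lambda)^{-m} - e^{m\lambda} = \bigO(\tau)$ uniformly (as $e^{m\lambda}$ is bounded). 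Applying this with $m=n$ handles the $u_0$ term, using in addition that $\Delta$ is positive semi-definite so $\|e^{-n\tau\Delta}\|\le1$ while $\|u_0-\bar u\mathbf{1}\|_\V$ is a fixed constant. For the summand, writing $\tfrac{\lambda}{1-\lambda}(1-\lambda)^{-(n-k)} = \lambda(1-\lambda)^{-(n-k+1)}$ and using the same expansion with $m=n-k+1$ gives $\tfrac{\lambda}{1-\lambda}(1-\lambda)^{-(n-k)} = \lambda e^{(n-k)\lambda}\big(1+\bigO(\tau)\big)$ uniformly in $k\in\{1,\dots,n\}$.

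Finally I would bound the accumulated error in the sum. By Lemma \ref{betalem} (applicable since $\bar u\in(0,1)$), $\|\beta_k - \overline{\beta_k}\mathbf{1}\|_\V$ is bounded by a constant independent of $k,n,\tau$, and $\|e^{-(n-k)\tau\Delta}\|\le1$; hence the discrepancy between $\sum_{k=1}^n \tfrac{\lambda}{1-\lambda}(1-\lambda)^{-(n-k)}e^{-(n-k)\tau\Delta}(\beta_k - \overline{\beta_k}\mathbf{1})$ and $\lambda\sum_{k=1}^n e^{(n-k)\lambda}e^{-(n-k)\tau\Delta}(\beta_k - \overline{\beta_k}\mathbf{1})$ is at most $C\,\bigO(\tau)\,\lambda\sum_{k=1}^n e^{(n-k)\lambda} = C\,\bigO(\tau)\,\tfrac{\lambda}{e^{\lambda}-1}\big(e^{n\lambda}-1\big)$, which is $\bigO(\tau)$ because $\tfrac{\lambda}{e^{\lambda}-1}\to1$ and $e^{n\lambda}$ stays bounded. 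Adding the three error contributions gives the stated formula. I expect the main obstacle to be exactly this last step: making sure every implicit constant is uniform along the joint limit and, in particular, that the geometric factor $\lambda\sum_{k=1}^n e^{(n-k)\lambda}$ does not blow up — which is precisely where the finite-horizon ($n\tau\to t$), fixed-$\varepsilon$ hypothesis is essential, since without it the $n$-fold accumulation of $\bigO(\lambda^2)$ errors need not remain $\bigO(\tau)$.
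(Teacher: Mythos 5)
Your proposal is correct and follows essentially the same route as the paper: the exact formula by induction on the recursion $(1-\lambda)v_{n+1}=e^{-\tau\Delta}v_n+\lambda(\beta_{n+1}-\overline{\beta_{n+1}}\mathbf{1})$, then a comparison of $(1-\lambda)^{-m}$ with $e^{m\lambda}$ using the boundedness of $m\lambda$ along the joint limit, the bound $\|e^{-s\Delta}\|\le 1$, and the uniform control on $\beta_k-\overline{\beta_k}\mathbf{1}$ from Lemma \ref{betalem}. The only (cosmetic) difference is in the bookkeeping for the sum term: you bound a uniform per-term relative error $e^{m\lambda}\bigO(\tau)$ and sum the geometric series, whereas the paper sums the two geometric series exactly and Taylor-expands $\tfrac{e^\lambda-1-\lambda}{e^\lambda-1}=\bigO(\lambda)$; both yield $\bigO(\tau)$.
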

\begin{proof}
	We prove by induction as in \cite{Budd}. The $n = 0$ base case is trivial. Then writing $\theta_n:=\beta_n -\overline{\beta_{n}}\mathbf{1}\in\V_{[\bar u-1,\bar u]}$ (by Lemma \ref{betalem}) and inducting we have: 
\vspace{-0.1em}\[\begin{split}u_{n+1} &= (1-\lambda)^{-1}e^{-\tau\Delta}u_n -\frac{\lambda}{1-\lambda}\bar u\mathbf{1} +\frac{\lambda}{1-\lambda}\theta_{n+1}\\
								   &=(1-\lambda)^{-1}e^{-\tau\Delta}\Big[\bar u\mathbf{1}+(1-\lambda)^{-n}e^{-n\tau\Delta}\left(u_0-\bar u\mathbf{1}\right) \\
								   &\hspace{1em } +\frac{\lambda}{1-\lambda}\sum_{k=1}^n (1-\lambda)^{-(n-k)}e^{-(n-k)\tau\Delta}\theta_k\Big]-\frac{\lambda}{1-\lambda}\bar u\mathbf{1} +\frac{\lambda}{1-\lambda}\theta_{n+1}\\
								   &=\left(\frac{1}{1-\lambda}-\frac{\lambda}{1-\lambda}\right)\bar u \mathbf{1}+(1-\lambda)^{-(n+1)}e^{-(n+1)\tau\Delta}\left(u_0-\bar u\mathbf{1}\right)\\
								   &\hspace{1em } + \frac{\lambda}{1-\lambda}\sum_{k=1}^n (1-\lambda)^{-(n-k+1)}e^{-(n-k+1)\tau\Delta}\theta_k +\frac{\lambda}{1-\lambda}\theta_{n+1}\\
								   &=\bar u\mathbf{1}+(1-\lambda)^{-(n+1)}e^{-(n+1)\tau\Delta}\left(u_0-\bar u\mathbf{1}\right)+\frac{\lambda}{1-\lambda}\sum_{k=1}^{n+1} (1-\lambda)^{-(n-k+1)}e^{-(n-k+1)\tau\Delta}\theta_k 
									 \end{split}\] 
	completing the induction. 

Then as in \cite{Budd}, we use $n\tau=t+\bigO(\tau)$ and $n\lambda=t/\varepsilon+\bigO(\tau)$ for fixed $t$ to control:
%\vspace{-0em}
\[\begin{split}&Q:=\left|\left|u_n - \bar u\mathbf{1} - e^{n\lambda} e^{-n\tau\Delta}\left(u_0-\bar u\mathbf{1}\right)-\lambda\sum_{k=1}^n e^{(n-k)\lambda}e^{-(n-k)\tau\Delta}\theta_k\right|\right|_\V \\
	&= \left|\left|\left((1-\lambda)^{-n}-e^{n\lambda}\right)\left(e^{-n\tau\Delta}u_0-\bar u\mathbf{1}\right)+ \lambda\sum_{k=1}^n \left((1-\lambda)^{-(n-k+1)}- e^{(n-k)\lambda}\right)e^{-(n-k)\tau\Delta}\theta_k\right|\right|_\V\\
&\leq \left((1-\lambda)^{-n}-e^{n\lambda}\right)\left|\left|e^{-n\tau\Delta}u_0-\bar u\mathbf{1} \right|\right|_\V + \lambda\sum_{k=1}^n \left((1-\lambda)^{-(n-k+1)}- e^{(n-k)\lambda}\right)\left|\left|e^{-(n-k)\tau\Delta}\theta_k\right|\right|_\V\\ 
	&\leq \left((1-\lambda)^{-n}-e^{n\lambda}\right)\left(\left|\left|e^{-t\Delta}u_0-\bar u\mathbf{1} \right|\right|_\V +\bigO(\tau)\right)+   \lambda C\sum_{k=1}^n \left((1-\lambda)^{-(n-k+1)}- e^{(n-k)\lambda}\right), 
\end{split}\] 
with the first equality by the triangle inequality since $(1-\lambda)^{-(r+1)} - e^{r\lambda}\geq 0$ as $ e^{-\lambda r/(r+1)}\geq 1-\lambda r/(r+1) \geq 1-\lambda $, and for the second inequality we can take $C:=\sup_{s\in[0,t+\tau]}\sup_{\theta\in\V_{[\bar u-1,\bar u]}}\left|\left|e^{-s\Delta}\theta \right|\right|_\V$. Note that we can bound $C \leq \sup_{s\in[0,t+\tau]}\left|\left|e^{-s\Delta} \right|\right|\cdot \max\{\bar u,1-\bar u\}\left|\left|\mathbf{1} \right|\right|_\V=\max\{\bar u,1-\bar u\}\left|\left|\mathbf{1} \right|\right|_\V$. Then 
\[\begin{split}
	RHS&= \left(D+\bigO(\tau)\right)\left(\left(1-\lambda\right)^{-n}-e^{t/\varepsilon} \right)+ C\frac{\lambda}{1-\lambda} \frac{(1-\lambda)^{-n}-1}{(1-\lambda)^{-1}-1}- \lambda C \frac{e^{n\lambda}-1}{e^{\lambda}-1}\\
	\intertext{where $D:=\left|\left|e^{-t\Delta}u_0-\bar u\mathbf{1} \right|\right|_\V$, so noting that $(1-\lambda)^{-n} = (1-(t/\varepsilon+\bigO(\tau))/n)^{-n}=e^{t/\varepsilon+\bigO(\tau)} +\bigO(1/n) = e^{t/\varepsilon} +\bigO(\tau) $,}
	RHS&=\bigO(\tau) + C \left(e^{t/\varepsilon}  - 1 +\bigO(\tau)-\lambda \frac{e^{t/\varepsilon} -1}{e^{\lambda}-1}\right)\\
	& =\bigO(\tau)  +C\left(e^{t/\varepsilon}  - 1\right)\frac{e^\lambda-1-\lambda}{e^\lambda-1} \\
	&=\bigO(\tau) + C\left(e^{t/\varepsilon}  - 1\right)\bigO(\lambda) = \bigO(\tau) \end{split}\] as desired. We have used here that $\frac{e^\lambda-1-\lambda}{e^\lambda-1}$ has Taylor series $\frac{1}{2}\lambda -\frac{1}{12}\lambda^2 + \bigO(\lambda^4)$, as can be checked by direct calculation. \end{proof}
\subsection{Proof of convergence}
We consider the limit of \eqref{SDiterate} as $\tau\downarrow 0$, $n\rightarrow \infty$ with $n\tau\rightarrow t$ for some fixed $t$ and $\tau\in(0,\varepsilon)$. The key insight, as in \cite{Budd}, is noticing that \eqref{SDiterate} strongly resembles a Riemann sum for the integral form for the mass-conserving AC flow from Theorem~\ref{explicitthm}. To exploit this, we define the piecewise constant function $z_\tau:[0,\infty)\rightarrow\V$,
\[z_\tau(s): =\begin{cases} e^{-\tau/\varepsilon} e^{\tau\Delta}\beta^{[\tau]}_1, & 0\leq s \leq \tau,\\
									 e^{-k\tau/\varepsilon} e^{k\tau\Delta} \beta^{[\tau]}_k,	& (k-1)\tau<s\leq k\tau \text{ for } k\in\mathbb{N}	,					
\end{cases}\]
and the function \[\gamma_\tau(s):= e^{s/\varepsilon}e^{-s\Delta} z_\tau(s)=\begin{cases} e^{-(\tau-s)/\varepsilon} e^{(\tau-s)\Delta}\beta^{[\tau]}_1, & 0\leq s \leq \tau,\\
									 e^{-(k\tau-s)/\varepsilon} e^{(k\tau-s)\Delta} \beta^{[\tau]}_k,	& (k-1)\tau<s\leq k\tau			\text{ for } k\in\mathbb{N},				
\end{cases}\]
(note that for bookkeeping we introduce the superscript $[\tau]$ to keep track of the time step governing a particular sequence of $u_n$ and $\beta_n$). We note an important convergence result.
\begin{prop}\label{BAprop}
For any sequence $\tau'_n\rightarrow 0$ with $\tau'_n<\varepsilon$ for all $n$,  there exists a function $z:[0,\infty) \rightarrow \V$ and a subsequence $\tau_n$ of $\tau'_n$ such that $z_{\tau_n}$ converges weakly to $z$ in $L^2_{loc}([0,\infty);\V)$ and $z_{\tau_n}$ weak*-converges to $z$ in $L^\infty_{loc}([0,\infty);\V)$.
\end{prop}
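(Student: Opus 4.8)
The plan is to deduce the claim from a uniform $L^\infty_{loc}$ bound on the family $\{z_\tau\}_{0<\tau<\varepsilon}$ together with a diagonal weak-$*$ compactness argument over an exhausting sequence of bounded time intervals. \emph{First} I would establish that on every bounded interval $[0,t_0]$ the $z_\tau$ are uniformly bounded. For $s\in((k-1)\tau,k\tau]\subseteq[0,t_0]$ one has $k\tau<t_0+\tau<t_0+\varepsilon$, so, using that $\Delta$ is self-adjoint and positive semi-definite (whence $e^{k\tau\Delta}$ is self-adjoint and $\|e^{k\tau\Delta}\|=\rho(e^{k\tau\Delta})=e^{k\tau\|\Delta\|}\le e^{(t_0+\varepsilon)\|\Delta\|}$), that $e^{-k\tau/\varepsilon}\le1$, and that $\beta^{[\tau]}_k\in\V_{[-1,1]}$ by Lemma~\ref{betalem} (recalling that $\bar u\in(0,1)$ is assumed throughout this section), so $\|\beta^{[\tau]}_k\|_\V\le\|\mathbf{1}\|_\V$, I obtain $\|z_\tau(s)\|_\V\le e^{(t_0+\varepsilon)\|\Delta\|}\|\mathbf{1}\|_\V$ for all $s\in[0,t_0]$ and all $\tau\in(0,\varepsilon)$. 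Since $z_\tau$ is piecewise constant, hence measurable, this bounds $\{z_\tau\}_{0<\tau<\varepsilon}$ in $L^\infty([0,t_0];\V)$ and, the interval being bounded, also in $L^2([0,t_0];\V)$, for every $t_0>0$.

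\emph{Next} I would extract the subsequence via a diagonal argument over the intervals $[0,m]$, $m\in\mathbb{N}$. As $\V$ is finite-dimensional, $L^1([0,m];\V)$ is separable with dual $L^\infty([0,m];\V)$, so the sequential Banach--Alaoglu theorem applies: $(z_{\tau'_n})$ has a subsequence converging weak-$*$ in $L^\infty([0,1];\V)$; refining successively on $[0,2],[0,3],\dots$ and passing to the diagonal yields a subsequence $\tau_n$ of $\tau'_n$ with $z_{\tau_n}|_{[0,m]}\overset{*}{\rightharpoonup}z^{(m)}$ in $L^\infty([0,m];\V)$ for every $m$. Uniqueness of weak-$*$ limits forces $z^{(m+1)}|_{[0,m]}=z^{(m)}$, so the $z^{(m)}$ are restrictions of a single $z:[0,\infty)\to\V$ with $z|_{[0,m]}=z^{(m)}\in L^\infty([0,m];\V)$; hence $z\in L^\infty_{loc}([0,\infty);\V)$, and, since restriction to a bounded subinterval preserves weak-$*$ convergence, $z_{\tau_n}\overset{*}{\rightharpoonup}z$ in $L^\infty_{loc}([0,\infty);\V)$.

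\emph{Finally} I would check that this same subsequence converges weakly in $L^2_{loc}$ to the same limit: for any $t_0>0$ and $\varphi\in L^2([0,t_0];\V)$, Cauchy--Schwarz gives $\varphi\in L^1([0,t_0];\V)$, so $\int_0^{t_0}\langle z_{\tau_n}(s),\varphi(s)\rangle_\V\,ds\to\int_0^{t_0}\langle z(s),\varphi(s)\rangle_\V\,ds$ by weak-$*$ convergence; since $z_{\tau_n},z\in L^\infty([0,t_0];\V)\subseteq L^2([0,t_0];\V)$, this pairing is exactly the $L^2([0,t_0];\V)$ inner product, so it expresses weak convergence $z_{\tau_n}\rightharpoonup z$ in $L^2([0,t_0];\V)$, and as $t_0$ is arbitrary we get convergence in $L^2_{loc}([0,\infty);\V)$.

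I expect the only substantive point to be the uniform bound of the first step: the key observation is that on a fixed bounded time interval the relevant index satisfies $k\tau=\bigO(1)$, so that the potentially growing operator norm $\|e^{-k\tau/\varepsilon}e^{k\tau\Delta}\|$ stays bounded, after which it combines with the uniform $\ell^\infty$ bound on $\beta^{[\tau]}_k$ supplied by Lemma~\ref{betalem}. The remaining steps — the diagonal extraction, the gluing of the limits, and the passage between weak-$*$ $L^\infty$ and weak $L^2$ convergence — are routine soft functional analysis.
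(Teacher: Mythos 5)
Your proof is correct and follows essentially the same route as the paper's: a uniform $L^\infty$ bound on bounded intervals from Lemma \ref{betalem} and the operator norm of $e^{-k\tau(\varepsilon^{-1}I-\Delta)}$, sequential Banach--Alaoglu on each $[0,m]$, and a diagonal extraction. The one (harmless) streamlining is in the final step: where the paper extracts separate weak-$L^2$ and weak*-$L^\infty$ limits and then identifies them by testing against $\chi_A\chi_{\{i\}}\in L^1\cap L^2$, you extract only the weak* limit and deduce the weak $L^2$ convergence directly from $L^2([0,t_0];\V)\subseteq L^1([0,t_0];\V)$ on bounded intervals, which is slightly more economical and equally valid.
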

\begin{proof} For $N\in \mathbb{N}$, consider  $z_\tau|_{[0,N]}$. As the $\beta^{[\tau]}_k\in\V_{[-1,1]}$ for all $k$ and $\tau$ by Lemma \ref{betalem}, we have for all $s\in[0,N]$ and $\tau<\varepsilon$
\[\left|\left|z_\tau(s)\right|\right|_\V \leq \sup_{s'\in[0,N+\varepsilon]}\left|\left|e^{-s'(\frac{1}{\varepsilon}I-\Delta)}\right|\right| \cdot \left|\left|\mathbf{1}\right|\right|_\V \leq \max\left\{1, e^{(N+\varepsilon)(||\Delta||-\varepsilon^{-1})}\right\} \cdot \left|\left|\mathbf{1}\right|\right|_\V\] 
where we have used that for $s\leq N$ the corresponding $k\tau$ in the exponent of $z_\tau(s)$ is less than $N+\tau$, and that $||e^{-s'(\frac{1}{\varepsilon}I-\Delta)}||=e^{s'(||\Delta||-\varepsilon^{-1})}$ is maximised at the endpoints of $[0,N+\varepsilon]$. Therefore the $z_\tau|_{[0,N]}$ are uniformly bounded in $||\cdot||_\V$ (and therefore in $||\cdot||_\infty$ since all norms on $\V$ are equivalent) for $\tau<\varepsilon$, and hence they lie in a closed ball in $L^2([0,N];\V)$ and in $L^\infty([0,N];\V)$. By the Banach--Alaoglu theorem the former  ball is weak-compact and the latter ball is weak*-compact. Hence for any $\tau'_n\downarrow 0$ there exists $\tau''_n$ a subsequence of $\tau'_n$ and  $ z\in L^2([0,N];\V)$ and $w \in L^\infty([0,N];\V)$ such that 
\begin{align*}
& z_{\tau''_n}|_{[0,N]}\rightharpoonup z\text{ in } L^2([0,N];\V), 
& z_{\tau''_n}|_{[0,N]}\rightharpoonup^* w\text{ in } L^\infty([0,N];\V) .
\end{align*}
We claim that $z = w$ a.e. on $[0,N]$. By the definitions of the weak and weak* topologies we have that for all $f \in L^2([0,N];\V)$ and $g \in L^1([0,N];\V)$
\begin{align*}
& \int_0^N \ip{z_{\tau''_n}(t),f(t)}\; dt \rightarrow \int_0^N \ip{z(t),f(t)}\; dt, 
& \hspace{-0.8em}\int_0^N \ip{z_{\tau''_n}(t),g(t)}\; dt \rightarrow \int_0^N \ip{w(t),g(t)}\; dt.
\end{align*}
Hence for any $A\subseteq [0,N]$ (measurable) and $i\in V$ consider $f(t):=\chi_A(t) \chi_i$. Then $f\in L^2([0,N];\V)\cap L^1([0,N];\V)$ and so for all measurable $A\subseteq[0,N]$, 
\[
\int_A z_i(t) -w_i(t) \; dt = 0 .
\]
Hence $z_i = w_i$ a.e. for each $i\in V$, so $ z = w$ a.e. on $[0,N]$. 

Finally, we extend to $[0,\infty)$ by a ``local-to-global" diagonal argument. First, we take $N = 1$: by above we can choose a subsequence $\tau^{(1)}$ of $\tau'$ such that $z_{\tau^{(1)}_n}$ converges in both the weak topology on $L^2$ and the weak* topology on $L^\infty$ to some $z$ on $[0,1]$ . Then to move from $N$ to $N+1$ we likewise choose a subsequence $\tau^{(N+1)}$ of $\tau^{(N)}$ such that $z_{\tau^{(N+1)}_n}$ converges in both senses to $z$ on $[0,N+1]$. Finally, define $\tau_n : = \tau_n^{(n)}$. Then for all bounded $T\subseteq [0,\infty)$, we have $T\subseteq [0,M]$ for some $M\in \mathbb{N}$ and hence $z_{\tau_n}|_T$ is eventually a subsequence of $z_{\tau^{(M)}_n}|_T$ and so converges in both senses to $z|_T$.
\end{proof}
\begin{cor} \label{zcor} From $z_{\tau_n} \rightharpoonup z$ in $L^2_{loc}([0,\infty);\V)$ we infer\emph{:}
\begin{enumerate}[ A.]
\item $\gamma_{\tau_n}\rightharpoonup \gamma $, where $\gamma(s) := e^{s/\varepsilon}e^{-s\Delta} z$, and $z_{\tau_n} -\bar z_{\tau_n}\mathbf{1}\rightharpoonup z-\bar z\mathbf{1}$ \emph{(}both in $L^2_{loc}([0,\infty);\V)$\emph{)}.
\item For all $t\geq 0$, \[\int_0^t z_{\tau_n}(s)-\bar z_{\tau_n}(s)\mathbf{1}\; ds \rightarrow \int_0^t  z(s)-\bar z(s)\mathbf{1}\; ds.\]
\item Replacing $\tau_n$ by an appropriate subsequence, we have strong convergence of the Cesàro sums, i.e. for all bounded $T\subseteq[0,\infty)$ \begin{align*}&\frac{1}{N}\sum_{n=1}^N z_{\tau_n} \rightarrow z &\text{ and }&&\frac{1}{N}\sum_{n=1}^N \gamma_{\tau_n} \rightarrow \gamma && \text { in } L^2(T;\V)\end{align*} as $N\rightarrow \infty$. 
\end{enumerate}
And from $z_{\tau_n} \rightharpoonup^* z$ in $L^\infty_{loc}([0,\infty);\V)$ we infer\emph{:}
\begin{enumerate}[A.]\setcounter{enumi}{3}
\item $\gamma_{\tau_n}\rightharpoonup^* \gamma$ in $L^\infty_{loc}([0,\infty);\V)$.
\end{enumerate}
\end{cor}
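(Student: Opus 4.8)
The plan is to treat parts A, B and D as routine consequences of the definitions of weak and weak$^*$ convergence, transferring the bounded operator $e^{s/\varepsilon}e^{-s\Delta}$ onto the test function, and to reserve the real work for part C, where the Banach--Saks theorem enters.

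For part A, I would fix $N\in\mathbb{N}$ and a test function $f\in L^2([0,N];\V)$. Since $\Delta$ is self-adjoint, so is $e^{s/\varepsilon}e^{-s\Delta}$, and $s\mapsto e^{s/\varepsilon}e^{-s\Delta}$ is continuous (indeed smooth) into the bounded operators on $\V$, hence uniformly bounded on $[0,N]$; therefore $s\mapsto e^{s/\varepsilon}e^{-s\Delta}f(s)$ again lies in $L^2([0,N];\V)$. Then $\int_0^N\langle\gamma_{\tau_n}(s),f(s)\rangle_\V\,ds=\int_0^N\langle z_{\tau_n}(s),e^{s/\varepsilon}e^{-s\Delta}f(s)\rangle_\V\,ds\to\int_0^N\langle z(s),e^{s/\varepsilon}e^{-s\Delta}f(s)\rangle_\V\,ds=\int_0^N\langle\gamma(s),f(s)\rangle_\V\,ds$ by $z_{\tau_n}\rightharpoonup z$, and since $N,f$ are arbitrary this gives $\gamma_{\tau_n}\rightharpoonup\gamma$ in $L^2_{loc}([0,\infty);\V)$. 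The claim $z_{\tau_n}-\bar z_{\tau_n}\mathbf{1}\rightharpoonup z-\bar z\mathbf{1}$ is the same argument with the time-independent self-adjoint bounded operator $P:u\mapsto u-\bar u\mathbf{1}$ (the orthogonal projection onto $\{\mathbf{1}\}^\bot$) in place of $e^{s/\varepsilon}e^{-s\Delta}$. Part B then follows by specialising part A to the test functions $f(s)=\chi_{[0,t]}(s)\,\chi_{\{i\}}$ (a legitimate element of $L^2([0,N];\V)$ once $N\geq t$) for each $i\in V$: this yields componentwise convergence of $\int_0^t(z_{\tau_n}-\bar z_{\tau_n}\mathbf{1})\,ds$ and hence, since $\V$ is finite-dimensional, convergence of these vectors. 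Part D is verbatim part A with $L^2$ replaced by $L^1$ throughout: for $g\in L^1([0,N];\V)$ the function $s\mapsto e^{s/\varepsilon}e^{-s\Delta}g(s)$ is still in $L^1([0,N];\V)$, so the weak$^*$ convergence $z_{\tau_n}\rightharpoonup^* z$ transfers directly to $\gamma_{\tau_n}\rightharpoonup^*\gamma$.

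For part C I would invoke the Banach--Saks theorem. By part A the pair $(z_{\tau_n},\gamma_{\tau_n})$ converges weakly to $(z,\gamma)$ in the Hilbert space $L^2([0,N];\V)\times L^2([0,N];\V)$, so Banach--Saks produces a subsequence along which the Ces\`aro means converge \emph{strongly}, and working in the product space guarantees that the same subsequence serves both $z_{\tau_n}$ and $\gamma_{\tau_n}$. To obtain a single subsequence valid on every bounded $T$ simultaneously, I would run the usual local-to-global diagonal argument (exactly as at the end of the proof of Proposition \ref{BAprop}): extract such a subsequence on $[0,1]$, then a further subsequence of it on $[0,2]$, and so on, then take the diagonal; for $T\subseteq[0,M]$ the diagonal sequence is eventually a subsequence of the $[0,M]$-extraction, so its Ces\`aro means converge strongly in $L^2(T;\V)$. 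The main obstacle is precisely this bookkeeping in part C --- arranging that a \emph{single} subsequence delivers strong Ces\`aro convergence simultaneously for $z$ and $\gamma$ and on all bounded time intervals --- which the product-space Banach--Saks extraction followed by diagonalisation resolves; elsewhere the only point needing care is the integrability of $s\mapsto e^{s/\varepsilon}e^{-s\Delta}f(s)$ (resp.\ $g(s)$), which is immediate from continuity of the operator-valued map on compact intervals.
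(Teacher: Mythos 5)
Your proposal is correct and follows essentially the same route as the paper: parts A, B and D by moving the self-adjoint bounded operators $e^{s/\varepsilon}e^{-s\Delta}$ and $u\mapsto u-\bar u\mathbf{1}$ onto the test function (in the $L^2$ pairing for A/B and the $L^\infty$--$L^1$ pairing for D), and part C by Banach--Saks plus a local-to-global diagonal extraction. Your product-space Banach--Saks extraction is a clean way to get a single subsequence serving both $z_{\tau_n}$ and $\gamma_{\tau_n}$ (one could alternatively note that $\frac{1}{N}\sum_n\gamma_{\tau_n}=e^{s/\varepsilon}e^{-s\Delta}\bigl(\frac{1}{N}\sum_n z_{\tau_n}\bigr)$, so strong Ces\`aro convergence for $z$ transfers to $\gamma$ by continuity), but this is a detail of bookkeeping rather than a different method.
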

\begin{proof}
Claim (A) follows since $f\mapsto e^{s/\varepsilon}e^{-s\Delta} f$ (where $s$ is the argument of $f$) and $f\mapsto f-\bar f\mathbf{1}$ are continuous self-adjoint maps on $L^2(T;\V)$ for $T$ bounded. Hence for all $f\in L^2(T;\V)$, 
\[(\gamma_{\tau_n},f)_{s\in T} = (z_{\tau_n},e^{s/\varepsilon}e^{-s\Delta}f)_{s\in T}\rightarrow (z,e^{s/\varepsilon}e^{-s\Delta}f)_{s\in T} = (\gamma,f)_{s\in T}\]
and 
\[(z_{\tau_n}-\bar z_{\tau_n}\mathbf{1}, f)_{t\in T}=(z_{\tau_n}, f-\bar f\mathbf{1})_{t\in T}\rightarrow (z,f-\bar f\mathbf{1})_{t\in T} = (z-\bar z\mathbf{1}, f)_{t\in T}.\]
Claim (B) is a direct consequence of weak convergence. Claim (C) follows by the Banach--Saks theorem \cite{BS}, which states that weak $L^p$ convergence on a bounded interval entails strong convergence of Cesàro sums on that interval along an appropriate subsequence, and a ``local-to-global" diagonal argument as in the above proof to extract a subsequence that works on all of $[0,\infty)$. Claim (D) follows since $f\mapsto e^{s/\varepsilon}e^{-s\Delta} f$ is continuous on $L^\infty(T;\V)$ and on $L^1(T;\V),$ for $T$ bounded, and with respect to  the pairing of $L^\infty$ with $L^1$, for all $f\in L^\infty(T;\V)$ and $g\in L^1(T;\V)$
\[
\int_T \ip{e^{s/\varepsilon}e^{-s\Delta} f(s),g(s)} \; ds = 
\int_T \ip{f(s),e^{s/\varepsilon}e^{-s\Delta} g(s)} \; ds
\]
so the map is ``self-adjoint" and so (D) follows by the same argument as (A).
\end{proof}
%\begin{nb} Since $L^2(T;\V)$ is separable, the Banach--Alaoglu and Banach--Saks theorems have constructive proofs. Hence, we can in principle extract an explicit $\tau_n$ from the above discussion. We omit the details of this construction. 
%\end{nb}
We now return to the question of convergence of the semi-discrete iterates. Taking $\tau$ to zero along the sequence $\tau_n$, we define for all $t\geq 0$ the continuum-time function 
\be \label{uhat}\hat u(t) := \lim_{n\rightarrow\infty, m =\ceil{t/\tau_n}} u^{[\tau_n]}_m. \ee
Therefore by \eqref{SDiterate} (note that $m$ depends on both $t$ and $n$, but for the sake of readability we will write $m$ rather than $m_n(t)$)
\[\begin{split}
%\hspace{-3em}
\hat u(t) = \bar u\mathbf{1} +  \lim_{n\rightarrow \infty} \bigg( & e^{m\tau_n/\varepsilon} e^{-m\tau_n\Delta}\left(u_0 -\bar u\mathbf{1}\right)\\ &+\frac{1}{\varepsilon}e^{m\tau_n/\varepsilon} e^{-m\tau_n\Delta} \tau_n\sum_{k=1}^m e^{-k\tau_n/\varepsilon} e^{k\tau_n\Delta}\left(\beta^{[\tau_n]}_k-\bar\beta^{[\tau_n]}_k\mathbf{1}\right)\bigg)\end{split}\]
and by rewriting the sum term via the definition of $z_{\tau_n}$:
\[\hat u(t)= \bar u\mathbf{1} + \lim_{n\rightarrow \infty} e^{m\tau_n/\varepsilon} e^{-m\tau_n\Delta}\left(u_0 -\bar u\mathbf{1}\right) +\frac{1}{\varepsilon}e^{m\tau_n/\varepsilon} e^{-m\tau_n\Delta}\int_0^{m\tau_n} z_{\tau_n}(s)-\bar z_{\tau_n}(s)\mathbf{1} \; ds.
\]
Then finally to prove global convergence we must show the desiderata:
\begin{enumerate}[ (i) ]
\item $\hat u(t)$ exists for all $t\geq 0$,
\item $\hat u(t)\in\V_{[0,1]}$ for all $t\geq 0$,
\item $\hat u$ is continuous and $H^1_{loc}([0,\infty);\V)$, and $\hat u(t)$ is a solution to the AC flow.
\end{enumerate}

Towards (i), let $A:= \varepsilon^{-1}I-\Delta$ and $e_n :=  m\tau_n - t \in [0,\tau_n)$. Then 
\[e^{m\tau_n/\varepsilon} e^{-m\tau_n\Delta} = e^{(t+e_n)A} = e^{tA}(I + \bigO(e_n)) = e^{tA} + \bigO(\tau_n)\]
and so 
\[\begin{split}\hat u(t) =  \bar u\mathbf{1} +& \lim_{n\rightarrow\infty} e^{tA}\left(u_0 -\bar u \mathbf{1}\right)\\&+\frac{1}{\varepsilon}\left(e^{tA} + \bigO(\tau_n)\right)\left(\int_0^{t}z_{\tau_n}(s)-\bar z_{\tau_n}(s)\mathbf{1}\; ds+\int_t^{t+e_n} z_{\tau_n}(s)-\bar z_{\tau_n}(s)\mathbf{1} \; ds\right).  \end{split}\]
Hence, as  by Proposition \ref{BAprop} the $z_{\tau_n}-\bar z_{\tau_n}\mathbf{1}$ are uniformly bounded on any compact interval (and so on $[0,t+\max_n e_n]$), it follows by Corollary \ref{zcor}(B)
\be \label{uhatsoln} \hat u(t) =  \bar u\mathbf{1} + e^{t/\varepsilon} e^{-t\Delta}\left(u_0 -\bar u\mathbf{1}\right) + \frac{1}{\varepsilon}e^{t/\varepsilon} e^{-t\Delta}\int_0^{t} z(s)-\bar z(s)\mathbf{1}\; ds. \ee

To show (ii): $\hat u(t)$ is a limit of semi-discrete iterates, each of which lies in $\V_{[0,1]}$. 

To show (iii): We verify the sufficient conditions in Theorem~\ref{explicitthm}.  By \eqref{uhatsoln} we have that $\hat u$ has the desired integral form, and since $\gamma-\bar\gamma \mathbf{1}$ is a weak limit of locally bounded and locally integrable functions we have that $\gamma-\bar\gamma \mathbf{1}$ is locally bounded a.e. and is locally integrable. 
%$z-\bar z(s)\mathbf{1}$ is locally bounded (i.e. bounded on compact subsets of $[0,\infty)$) as it is a weak limit of locally bounded functions. Thus $\int_0^t z(s)-\bar z(s)\mathbf{1}\; ds$, and so by \eqref{uhatsoln} $\hat u$, are continuous. Next by (ii), $\hat u$ is bounded so is $L^2_{loc}$. Lastly $\hat u$ has weak derivative \[\frac{d\hat u}{dt} = \left(\frac{1}{\varepsilon}I -\Delta\right)\left(u_0 -\bar u\mathbf{1}\right) + \frac{1}{\varepsilon}e^{t/\varepsilon} e^{-t\Delta}\left( z(t)-\bar z(t)\mathbf{1} + \left(\frac{1}{\varepsilon}I -\Delta\right) \int_0^t  z(s)-\bar z(s)\mathbf{1}\; ds\right)\] 
%which is $L^2_{loc}$ since $ z-\bar z\mathbf{1}$ is a weak limit of $L^2_{loc}$ functions (so is $L^2_{loc}$) and $\int_0^t  z(s)-\bar z(s)\mathbf{1}\; ds$ is a pointwise limit of locally bounded functions, so is locally bounded.
%
%Finally, to show (iv), we recall the rearrangement of the AC ODE: \[\frac{d}{dt}\left(e^{-t/\varepsilon}e^{t\Delta}\left(u(t)-\bar u\mathbf{1}\right)\right)=\varepsilon^{-1}e^{-t/\varepsilon}e^{t\Delta}\left(\beta(t)-\bar\beta(t)\mathbf{1}\right).\] 
%Inspection of \eqref{uhatsoln} shows that plugging in $\hat u$ into this ODE gives \[\gamma(t) -\bar\gamma(t)\mathbf{1} = \beta(t)-\bar\beta(t)\mathbf{1}\] so to show that $\hat u$ solves the ODE a.e. 
Thus it suffices to check the subdifferential condition.
\begin{lem} $\gamma(t) \in \mathcal{B}(\hat u(t))$ for a.e. $t\geq 0$.
\end{lem}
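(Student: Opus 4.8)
The plan is to pass to the limit in the subdifferential relations $\beta^{[\tau_n]}_k\in\mathcal{B}(u^{[\tau_n]}_k)$ using the Cesàro strong convergence of Corollary~\ref{zcor}(C), exploiting that $\mathcal{B}$ is defined coordinatewise and that each coordinate condition (a sign constraint, or a vanishing constraint) is preserved under averaging of the tail of a convergent real sequence. First I would record two routine facts. On the cell $s\in((k-1)\tau_n,k\tau_n]$ one has $\gamma_{\tau_n}(s)=e^{-(k\tau_n-s)(\varepsilon^{-1}I-\Delta)}\beta^{[\tau_n]}_k$ with $k\tau_n-s\in[0,\tau_n)$; since $\|\beta^{[\tau_n]}_k\|_\V\le\|\mathbf{1}\|_\V$ for all $k,n$ by Lemma~\ref{betalem} and $\|e^{-r(\varepsilon^{-1}I-\Delta)}-I\|\le e^{r\|\varepsilon^{-1}I-\Delta\|}-1\to 0$ as $r\downarrow 0$, we get that $\gamma_{\tau_n}(s)-\beta^{[\tau_n]}_{\lceil s/\tau_n\rceil}\to\mathbf{0}$ uniformly on bounded time intervals, at rate $\bigO(\tau_n)$. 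Secondly, for each fixed $t\ge 0$, writing $m_n:=\lceil t/\tau_n\rceil$, we have $u^{[\tau_n]}_{m_n}\to\hat u(t)$ by \eqref{uhat} (existence being desideratum (i)), and $\hat u(t)\in\V_{[0,1]}$ by desideratum (ii).

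Then I would invoke Corollary~\ref{zcor}(C): after passing to a further subsequence, $\tfrac1N\sum_{n=1}^N\gamma_{\tau_n}\to\gamma$ strongly in $L^2(T;\V)$ on every bounded $T$, hence — along a further subsequence in $N$ — pointwise a.e.\ on $[0,\infty)$. By the first fact above, the Cesàro averages of $s\mapsto\beta^{[\tau_n]}_{\lceil s/\tau_n\rceil}$ differ from those of $\gamma_{\tau_n}$ by a sequence that is $\bigO(\tau_n)$ in $L^\infty(T;\V)$, so they too converge to $\gamma$ pointwise a.e. Thus at a.e.\ $t$ we have $\tfrac1N\sum_{n=1}^N\beta^{[\tau_n]}_{m_n}\to\gamma(t)$ while $\beta^{[\tau_n]}_{m_n}\in\mathcal{B}(u^{[\tau_n]}_{m_n})$ and $u^{[\tau_n]}_{m_n}\to\hat u(t)$.

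The final step is a coordinatewise argument at such a $t$. Fix $i\in V$. If $\hat u_i(t)\in(0,1)$, then $u^{[\tau_n]}_{m_n,i}\in(0,1)$ for all large $n$, forcing $\beta^{[\tau_n]}_{m_n,i}=0$ for large $n$, so $\gamma_i(t)=0$. If $\hat u_i(t)=0$, then $u^{[\tau_n]}_{m_n,i}\neq 1$ for all large $n$, so $\beta^{[\tau_n]}_{m_n,i}\ge 0$ for large $n$ (it is $\ge 0$ when that coordinate equals $0$ and $=0$ when it lies in $(0,1)$), whence $\gamma_i(t)\ge 0$; symmetrically $\hat u_i(t)=1$ yields $\gamma_i(t)\le 0$. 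Here I use the elementary observation that if $a_n\to a$ in $\mathbb{R}$ and $a_n\ge 0$ (resp.\ $=0$, resp.\ $\le 0$) for all large $n$, then any Cesàro limit of $(a_n)$ is $\ge 0$ (resp.\ $=0$, resp.\ $\le 0$), since the finitely many exceptional terms contribute nothing in the limit and the remaining partial averages inherit the sign. Combined with $\hat u(t)\in\V_{[0,1]}$, these are precisely the conditions defining $\mathcal{B}(\hat u(t))$, so $\gamma(t)\in\mathcal{B}(\hat u(t))$ for a.e.\ $t\ge 0$.

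I expect the main obstacle to be the passage from the weak/weak$^*$ convergence $\gamma_{\tau_n}\rightharpoonup\gamma$ to information usable pointwise in time, which is exactly why the Banach--Saks--type Cesàro strong convergence of Corollary~\ref{zcor}(C), followed by extraction of an a.e.-convergent subsequence, is essential (plain weak convergence would not suffice, as it does not control pointwise values). A secondary point needing care is the uniform-in-$(s,n)$ closeness of $\gamma_{\tau_n}$ to the piecewise-constant $\beta$-profile, which rests on the uniform bound $\beta^{[\tau_n]}_k\in\V_{[-1,1]}$ from Lemma~\ref{betalem}.
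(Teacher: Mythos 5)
Your argument is correct and is essentially the paper's Proof (A) of this lemma: the same $\bigO(\tau_n)$ comparison of $\gamma_{\tau_n}$ with the piecewise-constant $\beta$-profile via Lemma \ref{betalem}, the same Banach--Saks/Ces\`aro extraction of an a.e.-pointwise limit from Corollary \ref{zcor}(C), and the same coordinatewise sign argument at the end (a step the paper omits, deferring to \cite{Budd}, but which you supply correctly, including the elementary fact that eventual sign constraints survive Ces\`aro averaging). For completeness, note the paper also records a second, independent Proof (B) that works directly with the weak* $L^\infty$ convergence of $\gamma_{\tau_n}$ against characteristic functions of the sets $\{t\mid \hat u_i(t)=0\}$, $\{t\mid \hat u_i(t)\in(0,1)\}$, $\{t\mid \hat u_i(t)=1\}$, avoiding the Ces\`aro machinery for this step.
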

We give two proofs of this result. We first recap part of the proof in \cite{Budd} in order to derive the characterisation of $\gamma$ in \eqref{gammacesaro}, and next we give a novel proof of this result.
\begin{proof}[Proof (A), cf. \cite{Budd}.]
 By Corollary \ref{zcor}(C), on each bounded $T\subseteq[0,\infty)$ $\gamma$ is the $L^2(T;\V)$ limit of \[S_N:= \frac{1}{N} \sum_{n=1}^N \gamma_{\tau_n}\] as $N\rightarrow \infty$. As $L^2$ convergence implies a.e. pointwise convergence along a subsequence, by a ``local-to-global"
diagonal argument there exists a sequence $N_k \rightarrow \infty$ such that for a.e. $t\geq 0$ 
\[\gamma(t) = \lim_{k\rightarrow\infty}  \frac{1}{N_k} \sum_{n=1}^{N_k} \gamma_{\tau_n}(t).\]
Recall $A:= \varepsilon^{-1}I-\Delta$,  $m := \ceil{t/\tau_n}$, and $e_n := m\tau_n -t \in [0,\tau_n)$. Then 
by Lemma \ref{betalem}
\[\begin{split}||\gamma_{\tau_n}(t) - \beta^{[\tau_n]}_m||_\V=|| (e^{-e_nA}-I)\beta^{[\tau_n]}_m||_\V& \leq (1-e^{-e_n||A||})||\mathbf{1}||_\V \\& < (1-e^{-\tau_n||A||})||\mathbf{1}||_\V<\tau_n||A||\,||\mathbf{1}||_\V.\end{split}\] Therefore for a.e. $t\geq 0$,%\footnote{ We show that  $\frac{1}{N_k} \sum_{n=1}^{N_k}\bigO(\tau_n)\rightarrow 0$. Let $\varepsilon >0$. Then  $\exists N$ s.t. $\forall n>N$ $\bigO(\tau_n) <\varepsilon/2$. Thus for all $k$ s.t. $N_k >\max\{N, \frac{2}{\varepsilon} \sum_{n=1}^{N}\bigO(\tau_n)\}$,  $\frac{1}{N_k} \sum_{n=1}^{N_k}\bigO(\tau_n)=\frac{1}{N_k} \sum_{n=1}^{N}\bigO(\tau_n)+\frac{1}{N_k} \sum_{n=N+1}^{N_k}\bigO(\tau_n)<\varepsilon$.}
\[
\left|\left |\gamma(t) -\frac{1}{N_k} \sum_{n=1}^{N_k} \beta^{[\tau_n]}_m \right |\right|_\V \leq \left|\left|\gamma(t)- \frac{1}{N_k} \sum_{n=1}^{N_k} \gamma_{\tau_n}(t) \right|\right|_\V + ||A||\, ||\mathbf{1}||_\V \frac{1}{N_k}\sum_{n=1}^{N_k} \tau_n\rightarrow 0
\]
as $k\rightarrow\infty$ (since $\tau_n\rightarrow 0$ and the convergence of a sequence implies the convergence of its Ces\`aro sums to the same limit), so for a.e. $t\geq 0$
\be \label{gammacesaro}
\gamma(t) = \lim_{k\rightarrow\infty} \frac{1}{N_k} \sum_{n=1}^{N_k} \beta_m^{[\tau_n]}.
\ee
The result then follows as in \cite{Budd}; we omit the details as they are identical to those in \cite{Budd}.
%%Recall that $u^{[\tau_n]}_m\rightarrow\hat u(t)$ and $\beta_m^{[\tau_n]}\in \mathcal{B}( u^{[\tau_n]}_m)$. Suppose first that $\hat u_i(t) \in (0,1)$.  Then we have some $M$ such that for all $n>M$, $(u^{[\tau_n]}_m)_i\in (0,1)$ and so $(\beta^{[\tau_n]}_m)_i=0$. Hence \[\gamma_i(t) = \lim_{k\rightarrow\infty}  \frac{1}{N_k}\left( \sum_{n=1}^{M} (\beta_m^{[\tau_n]})_i + \sum_{n=M+1}^{N_k} 0\right) = 0\] as desired. Next suppose $\hat u_i(t) = 0$. Then we have some $M$ such that for all $n>M$, $(u^{[\tau_n]}_n)_i\in [0,1)$ and so $ (\beta^{[\tau_n]}_n)_i \geq 0$. Hence 
%\[\gamma_i(t) \geq \lim_{k\rightarrow\infty}  \frac{1}{N_k}\left( \sum_{n=1}^{M} (\beta_m^{[\tau_n]})_i + \sum_{n=M+1}^{N_k} 0 \right) = 0 \] 
%as desired. Likewise for $\hat u_i(t) = 1$, $\gamma_i(t) \leq 0$. Hence we have $\gamma(t)\in\mathcal{B}(\hat u (t))$.
\end{proof}
\begin{proof}[Proof (B).]
Fix $i\in V$ and bounded $T\subseteq[0,\infty)$. For tidyness of notation, we define $x_n(t):=u^{[\tau_n]}_{\ceil{t/\tau_n}}$ and $x(t) := \hat u_i(t)$, and likewise $\xi_n(t):=  (\beta^{[\tau_n]}_{\ceil{t/\tau_n}})_i$ and $\xi(t):=\gamma_i(t)$. 
Let \begin{align*} &T_1:=\{t\in T \mid  x(t) = 0\}, 
&T_2:=\{t\in T \mid  x(t) \in (0,1)\},  & &T_3:=\{t\in T \mid  x(t) = 1\}.\end{align*} Then it suffices to show that $\xi \geq 0$ a.e. on $T_1$, $\xi = 0$ a.e. on $T_2$, and $\xi \leq 0 $ a.e. on $T_3$. 

By Corollary~\ref{zcor}(D) we have that $(\gamma_{\tau_n})_i\rightharpoonup^*\xi$ in $L^\infty_{loc}([0,\infty);\V)$ and therefore $(\gamma_{\tau_n})_i\rightharpoonup^*\xi$ in $L^\infty(T,\mathbb{R})$, i.e. for all $f\in L^1(T,\mathbb{R})$, as  $n\rightarrow\infty$
\[
\int_T (\gamma_{\tau_n})_i(t)f(t)\; dt \rightarrow \int_T \xi(t)f(t) \; dt.
\] 
Recalling from Proof (A) that $(\gamma_{\tau_n})_i(t) =\xi_n(t)+\bigO(\tau_n)$, we infer that as $n\rightarrow\infty$
\[
\int_T \xi_n(t)f(t)\; dt \rightarrow \int_T \xi(t)f(t) \; dt.
\] 

By \eqref{uhat} we have by definition that for all $t \in T_1$, $x_n(t) \rightarrow 0$. We define the (measurable) sets  $A_N:=\{t\in T_1\mid \forall n\geq N\: x_n(t) < 1/2\}$. Then by the pointwise convergence of the $x_n$,  $T_1 = \bigcup_N A_N$. Suppose for contradiction that for some $X\subseteq T_1$ of positive measure, $\xi < 0$ on $X$. So there exists $\delta >0$ and $Y\subseteq X$ of positive measure such that $\xi \leq -\delta$ on $Y$. As $T_1$ is the union of the $A_N$ there exists $N\in\mathbb{N}$ such that $Y\cap A_N$ is of positive measure. Taking test function $f = \chi_{Y\cap A_N}$ we infer that as $n\rightarrow\infty$ (and $\mu$ the Lebesgue measure)
\[
\int_{Y\cap A_N} \xi_n(t) \; dt \rightarrow \int_{Y\cap A_N} \xi(t)\; dt \leq -\delta\mu(Y\cap A_N) < 0
\]
but since $\beta_{\ceil{t/\tau_n}}^{[\tau_n]}\in \mathcal{B}( u^{[\tau_n]}_{\ceil{t/\tau_n}})$ we have that if $t\in A_N$ then for all $n\geq N$, $\xi_n(t)\geq 0$, so this is a contradiction. Hence $\xi\geq 0$ a.e. on $T_1$. By the same argument, $\xi \leq 0$ a.e. on $T_3$.

Finally, for all $t\in T_2$, since $x_n(t)\rightarrow x(t)$, $x_n(t)$ is eventually in $(0,1)$. Define $B_N:= \{t\in T_2\mid \forall n\geq N\: x_n(t) \in (0,1)\}$, and note that $T_2 = \bigcup_N B_N$ and that for $t\in B_N$ and $n\geq N$, $\xi_n(t)= 0$ since $\beta_{\ceil{t/\tau_n}}^{[\tau_n]}\in \mathcal{B}( u^{[\tau_n]}_{\ceil{t/\tau_n}})$. Suppose for contradiction that for some $X\subseteq T_2$ of positive measure, $\xi \neq 0$ on $X$. Then WLOG  there exists $\delta>0$ and $Y\subseteq X$ of positive measure such that $\xi \geq \delta$ on $Y$. As before there exists $N\in\mathbb{N}$ such that $Y\cap B_N$ is of positive measure. Taking $f = \chi_{Y\cap B_N}$ we infer that as $n\rightarrow\infty$ (for $n\geq N$)
\[
0 =\int_{Y\cap B_N} \xi_n(t) \; dt \rightarrow \int_{Y\cap B_N} \xi(t)\; dt \geq \delta\mu(Y\cap A_N) > 0
\]
a contradiction. Therefore $\xi =0$ a.e. on $T_2$. 
\end{proof}
\begin{nb}
Proof \emph{(}B\emph{)} is also valid in the non-mass-conserving case in \emph{\cite{Budd}}. We thank Dr. Carolin Kreisbeck for her suggestion of using weak* $L^\infty$ convergence which led to the development of this proof.
\end{nb}
In summary, we have the following convergence result.
\begin{thm}[\text{Cf. \cite[Theorem 21]{Budd}}]\label{SDlimit}
For any given $u_0\in\V_{[0,1]}\setminus\{\mathbf{0,1}\}$, $\varepsilon>0$ and $\tau_n\downarrow 0$, there exists a subsequence $\tau'_n$ of $\tau_n$ with $\tau'_n<\varepsilon$ for all $n$, such that along this subsequence the semi-discrete iterates $(u^{[\tau'_n]}_m,\beta^{[\tau'_n]}_m)$ given by \eqref{mSDobs} with initial state $u_0$ converge to an AC solution. That is, there exists $(\hat u,\gamma)$ a solution to \eqref{mACobs} with $\hat u(0) = u_0$, such that\emph{:} \begin{itemize}
\item $\beta^{[\tau'_n]}_{\ceil{\cdot/\tau'_n}}$ converges to $\gamma$ weakly in $L^2_{loc}([0,\infty);\V)$ and weakly* in $L^\infty_{loc}([0,\infty);\V)$, 
\item for each $t\geq 0$ and for $m:=\ceil{t/\tau'_n}$, $u^{[\tau'_n]}_m\rightarrow \hat u(t)$ as $n\rightarrow\infty$, and 
\item there is a sequence $N_k\rightarrow\infty$ such that for almost every $t\geq 0$, $\frac{1}{N_k}\sum_{n=1}^{N_k}\beta^{[\tau'_n]}_m\rightarrow \gamma(t)$ as $k\rightarrow\infty$.
\end{itemize}
\end{thm}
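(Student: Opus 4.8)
The plan is to assemble the machinery built up over this section: Proposition~\ref{BAprop}, Corollary~\ref{zcor}, the explicit identity \eqref{uhatsoln}, and the subdifferential lemma just proved. Given $u_0\in\V_{[0,1]}\setminus\{\mathbf{0,1}\}$, $\varepsilon>0$ and $\tau_n\downarrow 0$, first discard finitely many terms so that $\tau_n<\varepsilon$ for all $n$; then apply Proposition~\ref{BAprop} to extract a subsequence $\tau'_n$ along which $z_{\tau'_n}\rightharpoonup z$ in $L^2_{loc}([0,\infty);\V)$ and $z_{\tau'_n}\rightharpoonup^* z$ in $L^\infty_{loc}([0,\infty);\V)$, and, by Corollary~\ref{zcor}(C), pass to a further subsequence so that the Cesàro sums of $z_{\tau'_n}$ (hence of $\gamma_{\tau'_n}$) converge strongly in $L^2(T;\V)$ on every bounded $T$. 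Set $\gamma(s):=e^{s/\varepsilon}e^{-s\Delta}z(s)$, so by Corollary~\ref{zcor}(A,D) we have $\gamma_{\tau'_n}\rightharpoonup\gamma$ in $L^2_{loc}$ and $\gamma_{\tau'_n}\rightharpoonup^*\gamma$ in $L^\infty_{loc}$.

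Next I would verify the three desiderata for $\hat u$ as in \eqref{uhat}. Existence of the limit $\hat u(t)$ for every $t\geq 0$, together with the explicit form \eqref{uhatsoln}, follows from the $\bigO(\tau'_n)$-expansion of $e^{m\tau'_n A}$ with $A:=\varepsilon^{-1}I-\Delta$ and from Corollary~\ref{zcor}(B), using the uniform bound on $z_{\tau'_n}-\bar z_{\tau'_n}\mathbf{1}$ from Proposition~\ref{BAprop}; in particular $\hat u(0)=u_0$ by setting $t=0$ in \eqref{uhatsoln}. That $\hat u(t)\in\V_{[0,1]}$ is immediate since each $u^{[\tau'_n]}_m\in\V_{[0,1]}$ and $\V_{[0,1]}$ is closed. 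For the third desideratum I would invoke the converse direction of Theorem~\ref{explicitthm}: the function $\gamma-\bar\gamma\mathbf{1}$ is a weak $L^2_{loc}$ and weak* $L^\infty_{loc}$ limit of the uniformly bounded, locally integrable functions $\gamma_{\tau'_n}-\bar\gamma_{\tau'_n}\mathbf{1}$, hence is itself locally essentially bounded and locally integrable; \eqref{uhatsoln} is exactly the integral form \eqref{mACobsexplicit}; and $\gamma(t)\in\mathcal{B}(\hat u(t))$ for a.e.\ $t$ is precisely the lemma just proved. Theorem~\ref{explicitthm} then yields $\hat u\in H^1_{loc}([0,\infty);\V)$, continuity of $\hat u$, and that $(\hat u,\gamma)$ solves \eqref{mACobs} with $\hat u(0)=u_0$.

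Finally, for the three bullet points: the second is just the definition \eqref{uhat}; the third is \eqref{gammacesaro} from Proof~(A) of the lemma; and the first follows because $||\gamma_{\tau'_n}(t)-\beta^{[\tau'_n]}_{\ceil{t/\tau'_n}}||_\V=\bigO(\tau'_n)$ uniformly in $t$ on bounded intervals (as established in Proof~(A)), so $\beta^{[\tau'_n]}_{\ceil{\cdot/\tau'_n}}$ and $\gamma_{\tau'_n}$ share the same weak $L^2_{loc}$ and weak* $L^\infty_{loc}$ limit, namely $\gamma$. Since this argument produces, for arbitrary $u_0\in\V_{[0,1]}\setminus\{\mathbf{0,1}\}$ (the cases $\overline{u_0}\in\{0,1\}$ being trivial), a solution of \eqref{mACobs} with prescribed initial datum, it also establishes Theorem~\ref{existence}.

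I expect essentially all of the genuine difficulty to live in the already-established ingredients — Proposition~\ref{BAprop}, the Banach--Saks/Cesàro step of Corollary~\ref{zcor}, and above all the subdifferential lemma — so that the proof of Theorem~\ref{SDlimit} itself is bookkeeping: keeping track of which subsequence is selected at each stage and checking that the regularity and subdifferential hypotheses of Theorem~\ref{explicitthm} are met by the limiting objects. The one point needing care is that all the $\bigO(\tau'_n)$ errors in passing from \eqref{SDiterate} to \eqref{uhatsoln} are uniform on bounded time intervals, which is what permits interchanging the limit $n\to\infty$ with the time integral.
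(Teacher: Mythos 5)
Your proposal is correct and follows essentially the same route as the paper: Theorem \ref{SDlimit} is indeed stated there as a summary whose proof is exactly the preceding assembly of Proposition \ref{BAprop}, Corollary \ref{zcor}, the derivation of \eqref{uhatsoln}, the verification of desiderata (i)--(iii) via the converse of Theorem \ref{explicitthm} and the subdifferential lemma, and the $\bigO(\tau)$ comparison between $\gamma_{\tau_n}$ and $\beta^{[\tau_n]}_m$. Nothing essential is missing or different.
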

\begin{nb}
This result proves Theorem \ref{existence}, i.e. the existence of AC solutions.
\end{nb}
\begin{nb}
For $u_0=\mathbf{0}$ or $\mathbf{1}$, the $u^{[\tau_n]}_m\equiv u_0$ trivially converge but the $\beta^{[\tau_n]}_m$ need not converge.
\end{nb}
\begin{nb} As in \emph{\cite{Budd}}, we can avoid passing to a subsequence in all but the last of these convergences because of Theorem \ref{uniqueness}. 
Recall the fact noted in \emph{\cite{Budd}}\emph{:} if $(X,\rho)$ is a topological space, $x_n,x\in X$, and every subsequence of $x_n$ has a further subsequence converging to $x$ in $\rho$, then $x_n\rightarrow x$ in $\rho$. 
Let $\tau_n\downarrow 0$, with $\tau_n<\varepsilon$ for all $n$, and $x_n:=t\mapsto u^{[\tau_n]}_{\ceil{t/\tau_n}}\in (\V_{t\in[0,\infty)},\rho)$ for $\rho$ the topology of pointwise convergence. By Theorem \ref{SDlimit} applied to $\tau_{n_k}$, every subsequence $x_{n_k}$ has a subsequence converging to an AC solution with initial condition $u_0$. By uniqueness, these must equal $\hat u$. Therefore $x_n \rightarrow \hat u$ pointwise, without passing to a subsequence.
Likewise, the corresponding $\gamma-\bar\gamma\mathbf{1}$ is unique up to a.e. equivalence, so $z_{\tau_n}-\overline{z_{\tau_n}}\mathbf{1}\rightharpoonup z-\bar z\mathbf{1}$ and $\gamma_{\tau_n}-\overline{\gamma_{\tau_n}}\mathbf{1}\rightharpoonup \gamma-\bar \gamma\mathbf{1}$ in $L^2_{loc}([0,\infty;\V)$ without passing to a subsequence. Finally, when $\gamma$ is unique up to a.e. equivalence \emph{(}i.e. when $\hat u(t)\notin \V_{\{0,1\}}$ for all $t\geq 0$\emph{)} then $\gamma_{\tau_n}\rightharpoonup \gamma$ in $L^2_{loc}([0,\infty;\V)$ and $\gamma_{\tau_n}\rightharpoonup^* \gamma$ in $L^\infty_{loc}([0,\infty;\V)$ without passing to a subsequence. 
\end{nb}
\subsection{Consequences of Theorem \ref{SDlimit}}
Given this representation of the unique solution to \eqref{mACobs} as a limit of semi-discrete approximations, we can deduce a number of properties of this solution. 

First, following \cite{Budd}, we verify that the unique AC solution is a decreasing flow of $\GL$ by considering the Lyapunov functional $H$ for the semi-discrete scheme defined in \eqref{Lyap}, and in doing so obtain a control on the behaviour of $\GL(\hat u(t))$. 

We recall from \cite{Budd}, for $u\in\V_{[0,1]}$, the following scaling of the Lyapunov functional \[ H_\tau(u) :=\frac{1}{2\tau}H(u) = \GL(u)-\frac{1}{2}\tau \ip{u,Q_\tau u}\]
where $\tau^2 Q_\tau := e^{-\tau\Delta}-I + \tau\Delta$. Furthermore we recall the result.
\begin{prop}[$\text{\cite[Proposition 22]{Budd}}$] \label{Htauprop}
Let $u_\tau, u\in \V_{[0,1]}$ satisfy $||u_\tau-u||_\V \rightarrow 0$ as $\tau\rightarrow 0$. Then it follows that $H_\tau(u_\tau)\rightarrow \GL(u)$. 
\end{prop}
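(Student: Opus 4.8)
The plan is to start from the identity $H_\tau(u) = \GL(u) - \frac{1}{2}\tau\ip{u, Q_\tau u}$ recalled just above (with $\tau^2 Q_\tau = e^{-\tau\Delta} - I + \tau\Delta$), evaluate it at $u = u_\tau$, and handle the two terms separately: the first should converge to $\GL(u)$ by continuity, and the second should vanish as $\tau \downarrow 0$ thanks to a uniform operator-norm bound on $Q_\tau$.

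First I would note that on $\V_{[0,1]}$ the Ginzburg--Landau functional has the finite, explicit form $\GL(v) = \frac{1}{2}\ip{v, \Delta v} + \frac{1}{2\varepsilon}\ip{v, \mathbf{1} - v}$, using the standard identity $\frac{1}{2}||\nabla v||_\mathcal{E}^2 = \frac{1}{2}\ip{v, \Delta v}$ together with the explicit double-obstacle potential $\langle W\circ v,\mathbf{1}\rangle_\V = \frac12\ip{v,\mathbf{1}-v}$. This is a quadratic polynomial in $v$, hence continuous on the finite-dimensional space $\V$; since $u_\tau, u \in \V_{[0,1]}$ and $||u_\tau - u||_\V \to 0$, it follows immediately that $\GL(u_\tau) \to \GL(u)$.

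The substantive step is the second term. I would show that $Q_\tau$ is bounded in operator norm uniformly in $\tau$. Since $\Delta$ is self-adjoint and positive semi-definite, so is $Q_\tau = \tau^{-2}(e^{-\tau\Delta} - I + \tau\Delta)$, and by the spectral theorem its eigenvalues are precisely $\tau^{-2}(e^{-\tau\mu} - 1 + \tau\mu)$ as $\mu$ runs over the eigenvalues of $\Delta$. The elementary scalar inequality $0 \le e^{-x} - 1 + x \le \frac{1}{2}x^2$ for $x \ge 0$ (from the alternating tail of the exponential series) then gives $0 \le \tau^{-2}(e^{-\tau\mu} - 1 + \tau\mu) \le \frac{1}{2}\mu^2 \le \frac{1}{2}||\Delta||^2$, so $||Q_\tau|| = \rho(Q_\tau) \le \frac{1}{2}||\Delta||^2$ independently of $\tau$. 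Consequently $|\ip{u_\tau, Q_\tau u_\tau}| \le ||Q_\tau||\,||u_\tau||_\V^2 \le \frac{1}{2}||\Delta||^2\,||\mathbf{1}||_\V^2$, using $u_\tau \in \V_{[0,1]}$ (hence $||u_\tau||_\V \le ||\mathbf{1}||_\V$), and therefore $\frac{1}{2}\tau\ip{u_\tau, Q_\tau u_\tau} \to 0$ as $\tau \downarrow 0$. Combining with the previous paragraph, $H_\tau(u_\tau) = \GL(u_\tau) - \frac12\tau\ip{u_\tau, Q_\tau u_\tau} \to \GL(u) - 0 = \GL(u)$.

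I do not expect a genuine obstacle: the only non-mechanical ingredient is the scalar inequality $e^{-x} - 1 + x \le \frac{1}{2}x^2$ combined with the spectral theorem for $\Delta$, and the finite-dimensionality of $\V$ makes the continuity of $\GL$ on $\V_{[0,1]}$ and the equivalence of the relevant norms automatic, so no further care is needed.
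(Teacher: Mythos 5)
Your proof is correct. The paper itself gives no proof of this proposition here, simply citing \cite[Proposition 22]{Budd}, and your argument is exactly the expected one: continuity of the (finite, quadratic) $\GL$ on $\V_{[0,1]}$ handles the first term, and the uniform spectral bound $0 \le \tau^{-2}(e^{-\tau\mu}-1+\tau\mu) \le \tfrac12\mu^2$ together with $\|Q_\tau\| = \rho(Q_\tau)$ for the self-adjoint $Q_\tau$ kills the $\tfrac12\tau\ip{u_\tau,Q_\tau u_\tau}$ remainder.
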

%We therefore deduce the desired result exactly as in \cite{Budd}.
\begin{thm}\label{GLthm}
The AC trajectory $\hat u$ defined by \eqref{uhat} has $\GL(\hat u(t))$ monotonically decreasing in $t$. More precisely\emph{:} for all $t > s \geq 0 $, \be \label{GLstep} 
 \GL(\hat u(s)) -  \GL(\hat u(t)) \geq \frac{1}{2(t-s)} \left|\left|\hat u(s) -\hat u(t) \right|\right|_\V^2.
\ee 
\end{thm}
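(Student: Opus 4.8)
The plan is to mirror the argument of \cite{Budd}: transfer the one-step Lyapunov decrease from Theorem~\ref{Lyapthm} to the continuum limit, using the iterate convergence in Theorem~\ref{SDlimit} together with the continuity statement in Proposition~\ref{Htauprop}.

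Fix $t > s \geq 0$ and take the subsequence $\tau'_n \downarrow 0$ (with $\tau'_n < \varepsilon$) supplied by Theorem~\ref{SDlimit}, along which $u^{[\tau'_n]}_{\ceil{r/\tau'_n}} \to \hat u(r)$ in $\V$ for every $r \geq 0$. For each $n$ set $\lambda_n := \tau'_n/\varepsilon \in (0,1)$, $p_n := \ceil{s/\tau'_n}$ and $q_n := \ceil{t/\tau'_n}$, so that $p_n \leq q_n$, $q_n\tau'_n \to t$, $p_n\tau'_n \to s$, and hence $(q_n - p_n)\tau'_n \to t - s > 0$; in particular $q_n > p_n$ for all large $n$. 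Summing the one-step estimate \eqref{Hstep} over $k = p_n,\dots,q_n-1$ and telescoping, then applying Cauchy--Schwarz to the telescoping sum $\sum_{k=p_n}^{q_n-1}(u^{[\tau'_n]}_{k+1}-u^{[\tau'_n]}_k) = u^{[\tau'_n]}_{q_n}-u^{[\tau'_n]}_{p_n}$, gives
\[
H\bigl(u^{[\tau'_n]}_{p_n}\bigr) - H\bigl(u^{[\tau'_n]}_{q_n}\bigr) \;\geq\; (1-\lambda_n)\sum_{k=p_n}^{q_n-1} \bigl\|u^{[\tau'_n]}_{k+1} - u^{[\tau'_n]}_k\bigr\|_\V^2 \;\geq\; \frac{1-\lambda_n}{q_n - p_n}\bigl\|u^{[\tau'_n]}_{q_n} - u^{[\tau'_n]}_{p_n}\bigr\|_\V^2.
\]

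Next I would divide through by $2\tau'_n$ and use $H_{\tau'_n} = \tfrac{1}{2\tau'_n}H$, turning the left side into $H_{\tau'_n}(u^{[\tau'_n]}_{p_n}) - H_{\tau'_n}(u^{[\tau'_n]}_{q_n})$ and the right side into $\tfrac{1-\lambda_n}{2(q_n-p_n)\tau'_n}\|u^{[\tau'_n]}_{q_n} - u^{[\tau'_n]}_{p_n}\|_\V^2$. Letting $n\to\infty$: by Theorem~\ref{SDlimit}, $u^{[\tau'_n]}_{p_n}\to\hat u(s)$ and $u^{[\tau'_n]}_{q_n}\to\hat u(t)$, so Proposition~\ref{Htauprop} gives $H_{\tau'_n}(u^{[\tau'_n]}_{p_n})\to\GL(\hat u(s))$ and $H_{\tau'_n}(u^{[\tau'_n]}_{q_n})\to\GL(\hat u(t))$, while $\|u^{[\tau'_n]}_{q_n} - u^{[\tau'_n]}_{p_n}\|_\V^2\to\|\hat u(t)-\hat u(s)\|_\V^2$, $1-\lambda_n\to1$, and $(q_n-p_n)\tau'_n\to t-s$. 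Passing to the limit yields \eqref{GLstep}, and non-negativity of the right-hand side gives monotonicity of $\GL(\hat u(\cdot))$.

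The step I would be most careful about is the passage to the limit through Proposition~\ref{Htauprop}: it needs exactly the $\V$-norm convergence $\|u^{[\tau'_n]}_{\ceil{r/\tau'_n}} - \hat u(r)\|_\V \to 0$ at $r = s$ and $r = t$, which is precisely the second bullet of Theorem~\ref{SDlimit} (and holds along the full sequence $\tau'_n$ by the uniqueness remark following that theorem, so no additional subsequence is required). The case $s = 0$ is covered since then $p_n = 0$ and $u^{[\tau'_n]}_0 = u_0 = \hat u(0)$. Everything else — the telescoping, the Cauchy--Schwarz bound, the positivity of $1-\lambda_n$ (guaranteed by $\tau'_n < \varepsilon$), and the elementary limits for $q_n\tau'_n$, $(q_n-p_n)\tau'_n$ and $\lambda_n$ — is routine.
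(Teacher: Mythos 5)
Your proposal is correct and follows essentially the same route as the paper's proof: summing and telescoping the one-step Lyapunov decrease \eqref{Hstep}, bounding the sum of squares below by $\frac{1}{q_n-p_n}\|u_{q_n}-u_{p_n}\|_\V^2$ (the paper cites this as the identity \eqref{Plaw} from \cite{Budd}, which is exactly your Cauchy--Schwarz step), rescaling by $\frac{1}{2\tau'_n}$ to invoke Proposition~\ref{Htauprop}, and passing to the limit via Theorem~\ref{SDlimit}. No gaps.
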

\begin{proof} We reproduce the proof from \cite{Budd}.
Let  $t > s \geq 0 $ and $m:= \ceil{s/\tau_n}$ and $\ell := \ceil{t/\tau_n}$. 
Recall from \cite{Budd}: for all sequences $v_n\in \V$,  \be \label{Plaw} \sum_{n=1}^N  \left|\left|v_n \right|\right|_\V^2  = \frac{1}{N} \left|\left|\sum_{n=1}^N v_n \right|\right|_\V^2 + \frac{1}{N}\sum_{k<n} \left|\left|v_n - v_k \right|\right|_\V^2   \geq \frac{1}{N} \left|\left|\sum_{n=1}^N v_n \right|\right|_\V^2 .%\tag{$*$} 
\ee
Now by \eqref{uhat}, we have $u_m^{[\tau_n]}\rightarrow \hat u(s)$ and $u_\ell^{[\tau_n]}\rightarrow \hat u(t)$. It follows that:
\begin{align*}
 \GL(\hat u(s)) -  \GL(\hat u(t)) &=  \lim_{n\rightarrow\infty} H_{\tau_n}\left(u_m^{[\tau_n]}\right) -  H_{\tau_n}\left(u_\ell^{[\tau_n]}\right)&&\text{ by Proposition \ref{Htauprop}}&\\
&\geq \lim_{n\rightarrow\infty} \frac{1}{2\tau_n}\left(1-\frac{\tau_n}{\varepsilon}\right) \sum_{k=m}^{\ell-1} \left|\left|u_{k+1}^{[\tau_n]}-u_k^{[\tau_n]}\right|\right|_\V^2 &&\text{ by \eqref{Hstep}}&\\
&\geq \lim_{n\rightarrow\infty}  \frac{1}{2\tau_n}\left(1-\frac{\tau_n}{\varepsilon}\right) \frac{1}{\ell-m}\left|\left|u_{\ell}^{[\tau_n]}-u_m^{[\tau_n]}\right|\right|_\V^2&&\text{ by \eqref{Plaw}}& \\
&=\frac{1}{2(t-s)} \left|\left|\hat u(s) -\hat u(t) \right|\right|_\V^2 \geq 0 &&&
\end{align*}
as desired, since $\tau_n(\ell-m)\rightarrow t - s$.\end{proof}
\begin{nb}
Since $\GL(\hat u(s))-\GL(\hat u(t))\leq \GL(\hat u(s))\leq\GL(\hat u(0))$ it follows by \eqref{GLstep} that \[ \left|\left|\hat u(s) -\hat u(t) \right|\right|_\V\leq \sqrt{|t-s|}\sqrt{2\GL(\hat u(0))}\] which as in \emph{\cite{Budd}} gives an explicit $C^{0,1/2}$ condition for $\hat u$. 
\end{nb}
Next, we derive some controls on $\gamma$ and thereby infer a Lipschitz condition on $\hat u$. 
\begin{lem}\label{gammalem}
For $\gamma(t)$ given at a.e. $t\in T$ by \eqref{gammacesaro}, at each such $t$ \[\gamma(t) -\bar\gamma(t)\mathbf{1} \in \V_{[\bar u-1,\bar u]} %\subseteq \V_{[-1,1]}
\text{ and } \gamma(t) \in \V_{[-1,1]}.\]
\end{lem}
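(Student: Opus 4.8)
The plan is to pass the bounds on the discrete quantities $\beta^{[\tau_n]}_k$ (established in Lemma \ref{betalem}) through to the limit $\gamma$, exploiting the Cesàro-sum representation \eqref{gammacesaro}. Recall that by \eqref{gammacesaro}, for a.e.\ $t\geq 0$ we have $\gamma(t) = \lim_{k\to\infty} \frac{1}{N_k}\sum_{n=1}^{N_k} \beta^{[\tau_n]}_{m}$ where $m = \ceil{t/\tau_n}$. The key point is that both sets $\V_{[\bar u-1,\bar u]}$ and $\V_{[-1,1]}$ are closed and convex, hence closed under the operations of taking convex combinations (so in particular Cesàro averages) and taking limits. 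So if we can show that $\beta^{[\tau_n]}_m - \overline{\beta^{[\tau_n]}_m}\mathbf{1} \in \V_{[\bar u-1,\bar u]}$ and $\beta^{[\tau_n]}_m \in \V_{[-1,1]}$ for all $n$ and the relevant $m$, we are essentially done for the second claim, and nearly done for the first modulo interchanging the averaging with the $(\cdot) - \overline{(\cdot)}\mathbf{1}$ operation.

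First I would invoke Lemma \ref{betalem} directly: since $\hat u(0) = u_0$ has $\overline{u_0}=\bar u\in(0,1)$ (the case $\bar u\in\{0,1\}$ being excluded at the start of the section, and noting mass is conserved so $\overline{u^{[\tau_n]}_k}=\bar u$ for all $k$), the hypotheses of Lemma \ref{betalem} hold for each consecutive pair $(u^{[\tau_n]}_k,\beta^{[\tau_n]}_k)$, giving $\beta^{[\tau_n]}_k - \overline{\beta^{[\tau_n]}_k}\mathbf{1}\in\V_{[\bar u-1,\bar u]}$ and $\beta^{[\tau_n]}_k\in\V_{[-1,1]}$ for all $n,k$. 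For the second claim of the lemma, $\frac{1}{N_k}\sum_{n=1}^{N_k}\beta^{[\tau_n]}_m$ is then a convex combination of elements of the convex set $\V_{[-1,1]}$, hence lies in $\V_{[-1,1]}$, and $\V_{[-1,1]}$ is closed, so the limit $\gamma(t)$ lies in $\V_{[-1,1]}$ for a.e.\ $t$. For the first claim, I would note that the map $v\mapsto v - \bar v\mathbf{1}$ is linear, so $\gamma(t) - \overline{\gamma(t)}\mathbf{1} = \lim_{k\to\infty}\frac{1}{N_k}\sum_{n=1}^{N_k}\big(\beta^{[\tau_n]}_m - \overline{\beta^{[\tau_n]}_m}\mathbf{1}\big)$, which is a limit of convex combinations of elements of $\V_{[\bar u-1,\bar u]}$; since that set is closed and convex, $\gamma(t)-\overline{\gamma(t)}\mathbf{1}\in\V_{[\bar u-1,\bar u]}$ for a.e.\ $t$.

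I do not expect any serious obstacle here: the lemma is a routine "pass to the limit preserving a closed convex constraint" argument once Lemma \ref{betalem} and the representation \eqref{gammacesaro} are in hand. The only mild subtlety is the bookkeeping needed to confirm that the hypothesis $\overline{u^{[\tau_n]}_k} = \bar u \in (0,1)$ of Lemma \ref{betalem} is genuinely available — this follows from mass conservation of the semi-discrete scheme (the Proposition following Definition \ref{mSDdef}) together with the standing assumption $\bar u = \overline{u_0}\in(0,1)$ made at the start of Section \ref{SDconvsec} — and that the linear map commutes with the Cesàro average, which is immediate. One should also take care that the a.e.-defined set on which \eqref{gammacesaro} holds is the same (or can be intersected with) the set on which we draw the conclusion, but this is harmless since the intersection of two full-measure sets is full-measure.
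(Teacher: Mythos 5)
Your proof is correct and is precisely the argument the paper has in mind: the paper's own proof is the one-line remark that the lemma ``follows immediately from \eqref{gammacesaro} and the controls in Lemma \ref{betalem}'', and your write-up simply fills in the routine details (convexity and closedness of $\V_{[-1,1]}$ and $\V_{[\bar u-1,\bar u]}$, linearity of $v\mapsto v-\bar v\mathbf{1}$, and the availability of the hypotheses of Lemma \ref{betalem} via mass conservation and the standing assumption $\bar u\in(0,1)$). No issues.
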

\begin{proof}
Follows immediately from \eqref{gammacesaro} and the controls in Lemma \ref{betalem}.
\end{proof}
%Following \cite{Budd}, we therefore infer a Lipschitz condition on $\hat u$.
\begin{thm}\label{mACLips}
The AC trajectory $\hat u$ defined by \eqref{uhat} has $ \hat u \in C^{0,1}([0,\infty);\V)$.
\end{thm}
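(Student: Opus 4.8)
The plan is to avoid differentiating the explicit integral form \eqref{mACobsexplicit} directly — doing so produces factors of $e^{t/\varepsilon}e^{-t\Delta}$ and would only yield \emph{local} Lipschitz control — and instead to read a uniform bound on $d\hat u/dt$ straight off the defining ODE \eqref{mACobs}, exploiting that $\hat u$ is confined to the bounded set $\V_{[0,1]}$ and that the subdifferential term $\gamma-\bar\gamma\mathbf{1}$ is bounded. Throughout we work under the section's standing assumption $\overline{u_0}\in(0,1)$; for $\overline{u_0}\in\{0,1\}$ we have $\hat u\equiv u_0$ and the claim is trivial.

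First I would invoke Theorem \ref{SDlimit} to fix that $\hat u$, with an associated $\gamma$, solves \eqref{mACobs} on $[0,\infty)$ with $\hat u(0)=u_0$; in particular $\hat u\in H^1_{loc}([0,\infty);\V)\cap C^0([0,\infty);\V)$ and $\hat u(t)\in\V_{[0,1]}$ for all $t$. By Lemma \ref{H1AClem}, $\hat u$ is then locally absolutely continuous, with its weak derivative equal a.e.\ to its classical one; so it suffices to exhibit a constant $C$ with $\|d\hat u/dt\|_\V\leq C$ for a.e.\ $t$, since then the fundamental theorem of calculus gives $\|\hat u(t)-\hat u(s)\|_\V\leq C|t-s|$ for all $t,s\geq 0$. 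To obtain $C$, I would use mass conservation (Proposition \ref{mACmc}) to replace $\overline{\hat u(t)}$ by the constant $\overline{u_0}$ and rearrange \eqref{mACobs} into $\varepsilon\,\tfrac{d\hat u}{dt}=-\varepsilon\Delta\hat u(t)+\hat u(t)-\overline{u_0}\mathbf{1}+\gamma(t)-\bar\gamma(t)\mathbf{1}$. Each term on the right is bounded uniformly in $t$: $\|\hat u(t)\|_\V\leq\|\mathbf{1}\|_\V$ and $\|\hat u(t)-\overline{u_0}\mathbf{1}\|_\V\leq\|\mathbf{1}\|_\V$ since $\hat u(t)\in\V_{[0,1]}$ and $\overline{u_0}\in[0,1]$; $\|\Delta\hat u(t)\|_\V\leq\|\Delta\|\,\|\mathbf{1}\|_\V$ using $\|\Delta\|=\rho(\Delta)$; and $\|\gamma(t)-\bar\gamma(t)\mathbf{1}\|_\V\leq\|\mathbf{1}\|_\V$ by Lemma \ref{gammalem} (since $[\overline{u_0}-1,\overline{u_0}]\subseteq[-1,1]$). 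This gives $C=\bigl(\|\Delta\|+2/\varepsilon\bigr)\|\mathbf{1}\|_\V$, and the result follows.

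The only genuine obstacle is recognising that the explicit integral form is the wrong tool: the exponential growth of $e^{t/\varepsilon}e^{-t\Delta}$ really is present in the individual summands there and is only tamed by the global constraint $\hat u(t)\in\V_{[0,1]}$, which the ODE encodes transparently. Once that is seen, everything needed is already available — the uniform bound on $\gamma-\bar\gamma\mathbf{1}$ from Lemma \ref{gammalem} (itself inherited from the $\V_{[-1,1]}$ and $\V_{[\bar u-1,\bar u]}$ controls of Lemma \ref{betalem}) and the local absolute continuity from Lemma \ref{H1AClem} — so the remaining argument is just the short estimate above.
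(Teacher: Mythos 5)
Your proof is correct, but it takes a genuinely different route from the paper's. The paper does not differentiate the explicit integral form; instead it rebases that representation at $t_1$, writing $\hat u(t_2)-\hat u(t_1)=\bigl(e^{(t_2-t_1)A}-I\bigr)\bigl(\hat u(t_1)-\bar u\mathbf{1}\bigr)+\tfrac{1}{\varepsilon}\int_0^{t_2-t_1}e^{sA}\bigl(\gamma(t_2-s)-\bar\gamma(t_2-s)\mathbf{1}\bigr)\,ds$ with $A=\varepsilon^{-1}I-\Delta$, so the exponentials only involve $t_2-t_1$; it then bounds $\|(e^{\delta tA}-I)/\delta t\|$ and $\sup_{s\le t_2-t_1}\|e^{sA}\|$ and splits into the cases $t_2-t_1<1$ (uniform operator bounds) and $t_2-t_1\ge 1$ (trivial since $\hat u$ is bounded). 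So your premise that the integral form ``would only yield local Lipschitz control'' is not accurate — the rebasing tames the growth — but your alternative is nonetheless valid and arguably cleaner: reading an a.e.\ uniform bound $\|d\hat u/dt\|_\V\le(\|\Delta\|+2/\varepsilon)\|\mathbf{1}\|_\V$ directly off the ODE (using $\hat u(t)\in\V_{[0,1]}$, mass conservation, and Lemma \ref{gammalem}) and integrating via local absolute continuity avoids both the case split and the semigroup norm computations. Both arguments rest on the same essential input, the uniform control of $\gamma-\bar\gamma\mathbf{1}$ from Lemma \ref{gammalem}; the constants differ superficially (yours involves $\|\Delta\|$, the paper's involves $e^{1/\varepsilon}$, since $e^{-s\Delta}$ is a contraction there), but each yields a global Lipschitz constant. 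All the individual steps you cite — Theorem \ref{SDlimit} for existence of the pair $(\hat u,\gamma)$, Lemma \ref{H1AClem} for absolute continuity and a.e.\ classical differentiability, and the fundamental theorem of calculus for $H^1$ functions — are available in the paper and used correctly.
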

\begin{proof}
By \eqref{mACobsexplicit} and the argument in \cite[Theorem 11]{Budd} we have for $t_1<t_2$
\[\begin{split}  \hat u(t_2)-\hat u(t_1) =
%= \left(e^{t_2 A} -e^{t_1 A}\right)\left(u_0 -\bar u\mathbf{1}\right)+\frac{1}{\varepsilon} \int_0^{t_1}  \left( e^{(t_2-s)A} - e^{(t_1-s)A}\right)\left(\gamma(s) - \bar \gamma(s)\mathbf{1} \right)\; ds\\&\hspace{1em}+\frac{1}{\varepsilon}\int_{t_1}^{t_2}  e^{(t_2-s)A}\left(\gamma(s) - \bar \gamma(s)\mathbf{1} \right)\; ds \\
%&=\left(e^{(t_2-t_1)A}-I\right)\left[e^{t_1A}\left(u_0 -\bar u\mathbf{1}\right)+\frac{1}{\varepsilon} \int_0^{t_1}  e^{(t_1-s)A}\left(\gamma(s) - \bar \gamma(s)\mathbf{1} \right)\; ds\right] \\&\hspace{1em}+\frac{1}{\varepsilon}\int_{0}^{t_2-t_1}  e^{sA}\left(\gamma(t_2-s) - \bar \gamma(t_2-s)\mathbf{1} \right)\; ds\\
\left(e^{(t_2-t_1)A}-I\right)\left( \hat u(t_1)-\bar u \mathbf{1}\right) +\frac{1}{\varepsilon}\int_{0}^{t_2-t_1}  e^{sA}\left(\gamma(t_2-s) - \bar \gamma(t_2-s)\mathbf{1} \right)\; ds
\end{split} \] where $A:= \varepsilon^{-1}I - \Delta$ and $\gamma(t)$ is given at a.e. $t\geq 0$ by \eqref{gammacesaro}.
By Lemma \ref{gammalem}, $\gamma(t)-\bar\gamma(t)\mathbf{1}\in\V_{[\bar u-1,\bar u]}$ for a.e. $t\geq0$, and \[||\hat u(t)-\bar u\mathbf{1}||_\V\leq \max\{\bar u,1-\bar u\}||\mathbf{1}||_\V =: \rho ||\mathbf{1}||_\V\] since $ \hat u(t) \in \V_{[0,1]}$ for all $t\geq 0$. Let $B_{\delta t}: = (e^{\delta t A}- I)/\delta t $. Then $||B_{\delta t}|| =  {(e^{\delta t /\varepsilon}- 1)}/{ \delta t }$, which monotonically increases in $\delta t$.
We thus have for $t_2 -t_1 < 1$,
 \[\begin{split}\frac{\left|\left| \hat u(t_2)- \hat u(t_1)\right|\right|_\V}{t_2-t_1}&\leq ||B_{t_2-t_1}||\cdot \rho||\mathbf{1}||_\V +\frac{1}{\varepsilon} \underset{s\in [0,t_2-t_1]}{\text{ess sup}} \left|\left|e^{sA}\left(\gamma(t_2-s) - \bar \gamma(t_2-s)\mathbf{1} \right) \right|\right|_\V \\
 &\leq  \frac{e^{(t_2-t_1)/\varepsilon}- 1}{ t_2-t_1 }\cdot\rho ||\mathbf{1}||_\V + \frac{1}{\varepsilon}\sup_{s\in [0,t_2-t_1]} \left|\left|e^{sA}\right|\right| \cdot \rho||\mathbf{1}||_\V \\
&\leq \frac{e^{(t_2-t_1)/\varepsilon}- 1}{ t_2-t_1 }\cdot\rho  ||\mathbf{1}||_\V + \frac{1}{\varepsilon} e^{(t_2-t_1)/\varepsilon}  \cdot\rho ||\mathbf{1}||_\V \\
&< \rho ||\mathbf{1}||_\V \left (e^{1/\varepsilon}  -1 + \frac{1}{\varepsilon}e^{1/\varepsilon}\right) 
\end{split}\] and for $t_2 -t_1\geq 1$ we have 
\[\frac{\left|\left| \hat u(t_2)- \hat u(t_1)\right|\right|_\V}{t_2-t_1}\leq \left|\left| \hat u(t_2)- \hat u(t_1)\right|\right|_\V \leq ||\mathbf{1}||_\V \] completing the proof.
\end{proof} 

\section{Towards the multi-class case}\label{mcAC}

%
%
%Discuss Auction Dynamics and multiclass MBO
%
%Multi-obstacle potential: look at AC limit.

So far, we have considered only when $u$ separates into two phases (``classes") in the Allen--Cahn flow, and likewise the MBO scheme applies a binary threshold. In this section we begin to generalise to multiple classes, defining an Allen--Cahn flow against the \emph{multi}-obstacle potential, and a corresponding multi-class semi-discrete scheme, with and without mass-conservation. In future work, we hope to use the above framework to link this to a multi-class MBO scheme, as a special case of the semi-discrete scheme, and investigate the semi-discrete scheme for $\lambda\uparrow 1$ as a choice function for, and an algorithm for computing, multi-class mass-conserving MBO solutions.
%
%Thus far, we have considered only the case where $u$ separates into two phases (``classes") in the Allen--Cahn flow, and likewise where MBO applies a binary threshold. The eventual goal of this work is to generalise to the case with multiple classes, defining an Allen--Cahn flow against the \emph{multi}-obstacle potential, and by the above techniques deriving multi-class MBO and semi-discrete schemes. We shall then compare this method to the ``auction dynamics" method for multi-class MBO studied by Jacobs, Merkurjev and Esedo\=glu \cite{Auction}.
\subsection{Set-up}\label{mcsetup}
Let $K$ denote the number of classes we wish to divide into. Then define the simplex \[ \Sigma := \left\{x\in\mathbb{R}^K\,\middle |\,\sum_{k=1}^K x_k =1, x_k\geq 0\right\}\] and define the function spaces \[\V_{\Sigma} := \left\{U:V\rightarrow\Sigma\right\}\subseteq \left\{U:V\rightarrow\mathbb{R}^K\right\}=:\V^K. \] We will treat $U=(U_{ik})_{i\in V,k=1..K}\in\V^K$ interchangeably as functions on $V$ and as real matrices in $\mathbb{R}^{|V|\times K}$. Then we define the action of operators like $\Delta$ by matrix multiplication, i.e. $\Delta U$ applies $\Delta$ to each column of $U$. 
\begin{nb} For $U \in\V_\Sigma$, $U_{ik}$ describes the proportion of class $k$ at vertex $i$, e.g. if $K=3$ and row $i$ were $(0.2,0.5,0.3)$ that would describe vertex $i$ as being 20\% class 1, 50\% class 2, and 30\% class 3 \emph{(}for an interpretation of this, imagine the classes as red, green, and blue\emph{)}. For $K=2$, each vertex is mapped to some $(x,1-x)$ for $x\in[0,1]$, and we recover the setting of the earlier parts of this paper by projecting onto the first coordinate. 
\end{nb}

As we will be concerned with $U\in\V_\Sigma$, we often restrict attention to the hyperplane \[ \Pi  := \left\{x\in\mathbb{R}^K\,\middle |\,\sum_{k=1}^K x_k =1 \right\}\] and associated space $\V_{\Pi} := \left\{U:V\rightarrow\Pi\right\}$, as $\Sigma$ has non-empty interior in the induced topology on $\Pi$. By restricting from $\mathbb{R}^K$ to $\Pi$ we can eliminate behaviour in the normal direction to $\Pi$, i.e. parallel to $(1,...,1)\in\mathbb{R}^K$. 

%Letting $e_k$ denote the standard basis for $\mathbb{R}^K$, we define the orthonormal system of vectors: \[f_k = \sqrt{\frac{k}{k+1}}\left(\frac{1}{k}\sum_{l=1}^{k}e_l -e_{k+1}\right),\:\: 1\leq k\leq K-1.\] Note that for any $x\in\Pi$ we have that $\Pi = x + \operatorname{span}\{f_k\}$.
%

We define the multi-class inner product: \[\ipp{U,V}:=\text{tr}\left(U^T D^rV\right)\] where we recall that $D$ is the diagonal matrix of degrees. As in the two-class case we define $\V^K_{t\in T}:=\{U:T\rightarrow\V^K\}$, and likewise $\V_{\Sigma,t\in T}$ and $\V_{\Pi,t\in T}$, with inner product \[ ((U,V))_{t\in T}:=\int_T \ipp{U(t),V(t)} \; dt\] and via this define $L^2(T;\V^K)$ and the Sobolev space $H^1(T;\V^K)$ in like manner as before. Thus, $U\in H^1(T;\V^K)$ if and only if $U_i\in H^1(T;\mathbb{R}^K)$ for all $i\in V$, and we define the generalised derivative of $U\in H^1(T;\V^K)$ by, for all $\Phi\in C^\infty_c(T;\V^K)$, \[ \left(\left(\frac{dU}{dt},\Phi\right)\right)_{t\in T}=-\left(\left( U,\frac{d\Phi}{dt}\right)\right)_{t\in T} .\] Finally we define 
 \[ H^1_{loc}(T;\V^K) :=\left\{u\in \V^K_{t\in T}\,\middle|\, u\in H^1(I;\V^K) \text{ for all bounded open }I\subseteq T\right\}\]
and likewise we define $L^2_{loc}(T;\V^K)$.

\subsection{Allen--Cahn flow and the multi-obstacle potential}

We seek to extend \eqref{mACobs} to the multi-class case. First, we define the ``multi-obstacle" potential $\mathcal{W}:\V^K\rightarrow[0,\infty]$ \be \label{multiW} \mathcal{W}(U):= \begin{cases}\sum_{i\in V}d_i^r\prod_{k=1}^K(1-U_{ik}), &\text{if } U\in\V_\Sigma,\\
\infty, &\text{otherwise,}\end{cases}\ee and we define $W:\mathbb{R}^K\rightarrow[0,\infty]$ by \be W(x):= \begin{cases}\prod_{k=1}^K(1-x_k), &\text{if } x\in \Sigma,\\
\infty, &\text{otherwise},\end{cases}\ee and note that $\mathcal{W}(U) = \ip{W\circ U,\mathbf{1}}$. We define the multi-class Ginzburg--Landau energy:\be\label{mcGL} \GL(U) := \frac{1}{2} \ipp{U,\Delta U} + \frac{1}{\varepsilon}\mathcal{W}(U).\ee
As this energy is infinite outside $\V_\Pi$, trajectories of its AC gradient flow will be contained in $\V_\Pi$, so we define formally an AC flow restricted to  $\V_\Pi$ with a multi-well potential as \be\label{formalmAC} \frac{dU}{dt}=-\Delta U -\frac{1}{\varepsilon}\nabla_{\V_\Pi}\mathcal{W}|_{\V_\Pi}(U)\ee and we can define mass-conserving multi-well AC flow as the system of equations \be\label{formalmcmAC}\frac{dU^k}{dt}=-\Delta U^k -\frac{1}{\varepsilon}\left(\nabla_{\V_\Pi}\mathcal{W}|_{\V_\Pi}(U)\right)^k +\frac{1}{\varepsilon}\overline{\left(\nabla_{\V_\Pi}\mathcal{W}|_{\V_\Pi}(U)\right)^k}\mathbf{1}  \ee
where $(U^k)_i:=U_{ik}$. 
\begin{nb}
We restrict to $\V_\Pi$ here and not all the way to $\V_\Sigma$ because $\V_\Pi$ is a translated subspace of $\V^K$, which makes the analysis run smoother. We will handle the restriction of the flow to $\V_\Sigma$ via subdifferential terms. 
\end{nb}
We can then define a mass-conserving semi-discrete scheme:
\[\frac{U^k_{n+1}-e^{-\tau\Delta} U^k_n}{\tau}= -\frac{1}{\varepsilon}\left(\nabla_{\V_\Pi}\mathcal{W}|_{\V_\Pi}(U_{n+1})\right)^k +\frac{1}{\varepsilon}\overline{\left(\nabla_{\V_\Pi}\mathcal{W}|_{\V_\Pi}(U_{n+1})\right)^k}\mathbf{1}.   \]  
A simple calculation gives that (equipping $\Pi\subseteq\mathbb{R}^K$ with the standard inner product)
\[
\left(\nabla_{\V_\Pi}\mathcal{W}|_{\V_\Pi}(U)\right)_{ik} = \left(\nabla_{\Pi} W|_\Pi(U_i)\right)_k. 
\]
However, to treat this rigourously we must account for the non-differentiability of $\mathcal{W}$ and $W$, by considering the subgradient. We make the following definition.
\begin{mydef}[Subgradient and restricted subgradient]
Let $H_0$ be a Hilbert space, and let $f:H_0\rightarrow\bar{\mathbb{R}}$ be a convex function. Then the subgradient of $f$ is given by 
\[
\partial_{H_0}f(x) := \{ v\in H_0 \mid \forall y\in H_0, \: \langle v,y-x\rangle\leq f(y)-f(x)\} .
\] 
Let $H_1\subseteq H_0$ be a closed subspace, and let $\tilde H := x_0 + H_1$ for some $x_0 \in H_0$. Then we define the subgradient of $f|_{\tilde H}$ at $x\in \tilde H$ by 
\[
\partial_{\tilde H}f|_{\tilde H}(x) := \{ v\in H_1 \mid \forall y\in \tilde H, \: \langle v,y-x\rangle \leq f(y)-f(x)\} .
\] 
Note that if $v\in \partial_{H_0}f(x)$ and $v'$ is the orthogonal projection of $v$ onto $H_1$ then for $x,y\in\tilde H$, $y-x\in H_1$ and so $\langle v,y-x\rangle = \langle v',y-x\rangle$ and $v'\in\partial_{\tilde H}f|_{\tilde H}(x)$. 
\end{mydef}
\begin{nb}
Let $A\subseteq \tilde H$ be a convex set, and consider the indicator function
\[
I_A(x):=\begin{cases}0, &\text{if } x\in A,\\
\infty, &\text{if } x\in H_0\setminus A. \end{cases}
\]
Then for $x\notin A$, $\partial_{H_0}I_A(x) = \emptyset$. For $x\in A$ 
\[ \partial_{H_0}I_A(x) = \{ v\in H_0 \mid \forall y\in H_0, \: \langle v,y-x\rangle\leq I_A(y)\}  = \{ v\in H_0 \mid \forall y\in A, \: \langle v,y-x\rangle\leq 0\} .
\] Note that for $x\in \tilde H$, $ \partial_{\tilde H}I_A|_{\tilde H}(x) =  \partial_{H_0}I_A(x) \cap H_1$.
\end{nb}
With this framework in mind, for $x\in\Pi$ we can write, 
\[
W|_\Pi(x) = \prod_{k=1}^K(1-x_k) + I_\Sigma|_\Pi(x)=: W_0|_\Pi(x) + I_\Sigma|_\Pi(x).
\]
%where 
%\[
%I_\Sigma(x):=\begin{cases}0, &\text{if } x\in \Sigma\\
%\infty, &\text{if } x\in\Pi\setminus\Sigma \end{cases} = \begin{cases}0, &\text{if } \forall k \: x_k\geq 0\\
%\infty, &\text{otherwise} \end{cases} = \sum_{k=1}^K I_{[0,\infty)}(x_k). 
%\]
We consider the subgradient of the non-differentiable $I_\Sigma|_\Pi$ term for $x\in\Sigma$
\[
\partial_{\Pi}I_\Sigma|_\Pi(x) = \left\{ v\in \mathbb{R}^K \: \middle | \:  \forall y \in \Sigma, \: \sum_{k=1}^K v_k(y_k-x_k) \leq 0\text{ and } \sum_{k=1}^K v_k = 0 \right\}.
\]
Note that the latter condition comes from $\Pi$ being of the form $x_0 + (1,...,1)^\bot$ for $x_0\in\Pi$.
\begin{prop}\label{subdiff}
For $x\in\Sigma$, $v\in \partial_{\Pi}I_\Sigma|_\Pi(x)$ if and only if $\sum_{k=1}^K v_k = 0$ and for some $\alpha\in\mathbb{R}$ and all $k\in\{1,...,K\}$,
\[
v_k \in \begin{cases}
\{\alpha\}, &\text{if }x_k >0\\
(-\infty,\alpha], &\text{if } x_k = 0
\end{cases}
\]
i.e. for all $k$, $v_k-\alpha \in \partial I_{[0,\infty)}(x_k)$. Equivalently, for all $k\in\{1,...,K\}$, \[
v_k = -\beta_k +\frac{1}{K} \sum_{q=1}^K \beta_q 
\]
for $\beta_k  \in -\partial I_{[0,\infty)}(x_k)$.
\end{prop}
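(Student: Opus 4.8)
The plan is to work directly from the definition. Since $\Sigma\subseteq\Pi$ and $\Pi$ is an affine translate of $(1,\dots,1)^\bot$, for $x\in\Sigma$ we have $v\in\partial_\Pi I_\Sigma|_\Pi(x)$ precisely when $\sum_{k=1}^K v_k=0$ and $\langle v,y-x\rangle\le 0$ for every $y\in\Sigma$; that is, $v$ lies in the normal cone of $\Sigma$ at $x$ and is orthogonal to $(1,\dots,1)$. The useful structural observation is that since $y,x\in\Pi$ imply $\sum_k(y_k-x_k)=0$, the quantity $\langle v,y-x\rangle$ is unchanged if we replace $v$ by $v-\alpha(1,\dots,1)$ for any $\alpha\in\mathbb{R}$; so the normal-cone condition depends only on $v$ modulo $\operatorname{span}\{(1,\dots,1)\}$, which is exactly what makes the claimed characterisation (with its free parameter $\alpha$) natural.

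For the ``if'' direction, suppose $\sum_k v_k=0$ and there is $\alpha$ with $v_k=\alpha$ whenever $x_k>0$ and $v_k\le\alpha$ whenever $x_k=0$. For any $y\in\Sigma$, write $\langle v,y-x\rangle=\sum_k(v_k-\alpha)(y_k-x_k)$, which is valid because $\sum_k(y_k-x_k)=0$. Every term with $x_k>0$ vanishes, while every term with $x_k=0$ has $v_k-\alpha\le 0$ and $y_k-x_k=y_k\ge 0$, so the sum is $\le 0$; hence $v\in\partial_\Pi I_\Sigma|_\Pi(x)$.

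For the ``only if'' direction, the key is to feed in the right feasible test points, namely the simplex-edge perturbations $y=x+t(e_k-e_l)$, which lie in $\Sigma$ for $t$ of small modulus and appropriate sign. Let $P:=\{k\mid x_k>0\}$, which is nonempty since $\sum_k x_k=1$. Taking $k,l\in P$ and $t$ of either sign small enough that $x_k\pm t>0$ and $x_l\mp t>0$, feasibility of $y$ together with $\langle v,y-x\rangle=t(v_k-v_l)\le 0$ for both signs of $t$ forces $v_k=v_l$; call this common value $\alpha$ (if $|P|=1$, simply set $\alpha:=v_k$ for the unique $k\in P$). Then for $k$ with $x_k=0$ and any $l\in P$, taking $t\in(0,x_l]$ gives $y\in\Sigma$ and $0\ge\langle v,y-x\rangle=t(v_k-\alpha)$, so $v_k\le\alpha$. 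This is precisely the claimed characterisation, and ``$v_k-\alpha\in\partial I_{[0,\infty)}(x_k)$ for all $k$'' is merely a restatement of it, since $\partial I_{[0,\infty)}(s)=\{0\}$ for $s>0$ and $\partial I_{[0,\infty)}(0)=(-\infty,0]$.

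Finally, the equivalence with the $\beta$-formulation is short algebraic bookkeeping. Given $v$ with $\sum_k v_k=0$ satisfying the $\alpha$-characterisation, put $\beta_k:=\alpha-v_k$, so $\beta_k=0$ when $x_k>0$ and $\beta_k\ge 0$ when $x_k=0$, i.e. $\beta_k\in-\partial I_{[0,\infty)}(x_k)$; then $\frac{1}{K}\sum_q\beta_q=\alpha-\frac{1}{K}\sum_q v_q=\alpha$, whence $-\beta_k+\frac{1}{K}\sum_q\beta_q=v_k$. Conversely, for any $\beta_k\in-\partial I_{[0,\infty)}(x_k)$ the vector $v_k:=-\beta_k+\frac{1}{K}\sum_q\beta_q$ automatically satisfies $\sum_k v_k=0$ and the $\alpha$-characterisation with $\alpha:=\frac{1}{K}\sum_q\beta_q$. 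I expect the only point requiring genuine care to be the choice and feasibility verification of the edge test points in the ``only if'' direction (and the degenerate case $|P|=1$); the rest is routine.
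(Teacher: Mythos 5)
Your proof is correct and follows essentially the same route as the paper's: the ``if'' direction is the same rearrangement of $\sum_k v_k(y_k-x_k)$ using $\sum_k(y_k-x_k)=0$, and the ``only if'' direction tests with mass-transfer perturbations between coordinates (you use small two-sided shifts $x+t(e_k-e_l)$ where the paper transfers the full mass $x_\ell$, a cosmetic difference), with the same algebraic bookkeeping for the $\beta$-reformulation.
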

\begin{proof} (``If")
For any such $v$ and $y\in\Sigma$
\[
 \sum_{k=1}^K v_k(y_k-x_k)  =  \sum_{k:x_k=0} v_ky_k + \sum_{k:x_k>0} \alpha(y_k-x_k) \leq \alpha  \sum_{k=1}^K (y_k-x_k) = 0  
\]
so $v\in \partial_{\Pi}I_\Sigma|_\Pi(x)$.

(``Only if") 
%Note that $y\in \Sigma$ if and only if $\xi := y-x$ has $\xi_k\geq -x_k$ and $\sum_{k=1}^K \xi_k = 0$. So $v\in \partial_{\Pi}I_\Sigma(x)$ only if 
%\[
%\sum_{k=1}^K v_k\xi_k\leq 0
%\]
%for all such $\xi$. 
There must exist at least one $\ell$ with $x_\ell>0$. Choose an arbitrary such $\ell$ and an arbitrary $k\neq\ell$, and define $y\in\Sigma$ with $y_\ell = 0$, $y_k = x_k + x_\ell$ and $y_q = x_q$ for $q\neq k,\ell$. Then $v\in \partial_{\Pi}I_\Sigma|_\Pi(x)$ only if $x_\ell(v_k-v_\ell)\leq 0$, i.e. $v_k\leq v_\ell$. As $k$ was arbitrary and $\ell$ was an arbitrary $\ell$ such that $x_\ell > 0$, the desired form for $v$ follows.

For the equivalence, $v_k-\alpha \in \partial I_{[0,\infty)}(x_k)$ and $\sum_k v_k =0$ if and only if $v_k = -\beta_k + \alpha$ and $K\alpha = \sum_k\beta_k$, for some $\beta_k  \in -\partial I_{[0,\infty)}(x_k)$.
\end{proof}
%We immediately infer that 
%\[
%  \{ v\in \mathbb{R}^K\mid \forall k \: v_k \in \partial I_{[0,\infty)}(x_k)\} \subseteq \partial_{\Pi}I_\Sigma(x) 
%\]
Then, we handle the non-differentiability of $W$ via the subgradient, i.e.
\[
\nabla_{\Pi} W|_\Pi(x) \in \nabla_{\Pi} W_0|_\Pi(x) + \partial_{\Pi}I_\Sigma|_\Pi(x)
\]
so we get for $x\in\Sigma$ (recalling that Definition \ref{restrictgrad} requires $\nabla_{\Pi} W_0|_\Pi(x) \in (1,...,1)^\bot$)
\[
\left(\nabla_{\Pi} W|_\Pi(x)\right)_k =  \left(- \prod_{\ell\neq k}(1-x_\ell) + \frac{1}{K}\sum_{q=1}^K\prod_{\ell\neq q}(1-x_\ell) \right) +\left(- \beta_k(x) +\frac{1}{K}\sum_{q=1}^K \beta_q(x)\right)
\]
 for some $\beta_k(x) \in -\partial I_{[0,\infty)}(x_k)$, i.e. 
\[
\beta_k(x) \in 
\begin{cases}\{0\}, &\text{if } x_k > 0, \\
[0,\infty), &\text{if }x_k = 0. \end{cases} 
\] 
Accordingly, for $U\in\V_{\Sigma}$ we define the set \[\mathcal{B}(U):=\{\beta\in \V^K \mid \beta_{ik}\in -\partial I_{[0,\infty)}(U_{ik})\}\]
and the function
\[
f_{ik}(U) := \prod_{\ell\neq k}(1-U_{i\ell}) - \frac{1}{K}\sum_{q=1}^K\prod_{\ell\neq q}(1-U_{i\ell}) 
\]
so that 
\be\label{gradW}
\left(\nabla_{\V_\Pi}\mathcal{W}|_{\V_\Pi}(U)\right)_{ik} = \left(\nabla_{\Pi} W|_\Pi(U_i)\right)_k = - f_{ik}(U) - \beta_{ik} + \frac{1}{K}\sum_{q=1}^K\beta_{iq} =: - f_{ik}(U) - \tilde\beta_{ik} 
\ee
where $\beta\in\mathcal{B}(U)$ and $\tilde \beta_{ik} :=  \beta_{ik} - \frac{1}{K}\sum_{q=1}^K\beta_{iq}$, i.e. $\tilde \beta = \beta (I-\frac{1}{K}J_K)$ where $J_K$ is the $K\times K$ matrix of ones and $I$ is the identity. Define $\tilde{\mathcal{B}}(U):=\{ \beta (I-\frac{1}{K}J_K)\mid\beta\in\mathcal{B}(U)\}$.

Thus plugging \eqref{gradW} into the formal expressions \eqref{formalmAC} and \eqref{formalmcmAC} we get the multi-obstacle AC flow:
\begin{align*}%\label{mcAC}
&\frac{dU}{dt}=-\Delta U(t)+\frac{1}{\varepsilon}f(U(t))+ \frac{1}{\varepsilon}\tilde\beta(t), &\tilde\beta(t)\in\tilde{\mathcal{B}}(U(t)) \end{align*}
and mass-conserving variant:
\begin{align*}%\label{mcmAC}
&\frac{dU^k}{dt}=-\Delta U^k(t) +\frac{1}{\varepsilon}f^k(U(t))+ \frac{1}{\varepsilon}\tilde\beta^k(t)  -\frac{1}{\varepsilon}\overline{f^k(U(t)) + \tilde\beta^k(t)}\mathbf{1},  &\tilde\beta(t)\in\tilde{\mathcal{B}}(U(t)) \end{align*}
As in the two-class case, we therefore tidy up and make the following definitions.
\begin{mydef}[Multi-obstacle AC flow]\label{mcACdef}
Let $T$ be any interval. A pair $(U,\beta)\in\V_{\Sigma,t\in T}\times\V^K_{t\in T}$ with $U\in H^1_{loc}(T;\V^K)\cap C^0(T;\V^K)$ is a solution to \emph{multi-obstacle AC flow} on $T$ when for a.e. $t\in T$
\begin{align}\label{mcAC1}&\frac{dU}{dt}=-\Delta U(t)+\frac{1}{\varepsilon}f(U(t))+ \frac{1}{\varepsilon}\tilde\beta(t), &\tilde\beta(t)\in\tilde{\mathcal{B}}(U(t)) \end{align}
and is a solution to \emph{mass-conserving multi-obstacle AC flow} on $T$ when for a.e. $t\in T$ and all $k\in\{1,2,...,K\}$
\begin{align}\label{mcmAC1}&\hspace{-0.5em}\frac{dU^k}{dt}=-\Delta U^k(t) +\frac{1}{\varepsilon}f^k(U(t))+ \frac{1}{\varepsilon}\tilde\beta^k(t)  -\frac{1}{\varepsilon}\overline{f^k(U(t)) + \tilde\beta^k(t)}\mathbf{1},  &\tilde\beta(t)\in\tilde{\mathcal{B}}(U(t)). \end{align}
\end{mydef}
And therefore we can define the corresponding semi-discrete schemes.
\begin{mydef}[Multi-class semi-discrete scheme]\label{mcSDdef}
We define the \emph{multi-class semi-discrete scheme} by\emph{:} 
\begin{align}\label{mcSD}&U_{n+1}=e^{-\tau\Delta} U_n +\lambda f(U_{n+1}) + \lambda\tilde\beta_{n+1} ,    &\tilde\beta_{n+1}\in\tilde{\mathcal{B}}(U_{n+1}) \end{align}
and the \emph{mass conserving multi-class semi-discrete scheme} by, for all $k\in\{1,2,...,K\}$\emph{:}
\begin{align}\label{mcmSD}
&\hspace{-0.1em}U^k_{n+1}=e^{-\tau\Delta} U^k_n +\lambda f^k(U_{n+1}) + \lambda\tilde\beta^k_{n+1} - \lambda\overline{ f^k(U_{n+1}) + \tilde\beta^k_{n+1}}\mathbf{1},    &\beta_{n+1}\in\tilde{\mathcal{B}}(U_{n+1}). \end{align}
\end{mydef}
\begin{nb}
To check that this all makes sense, we confirm that all of these processes stay inside $\V_\Pi$. For \eqref{mcAC1} and \eqref{mcmAC1}, this is equivalent to checking that\emph{:} 
\[
\sum_{k=1}^K \frac{dU_{ik}}{dt} = 0.
\]
By definition, $\sum_{k=1}^K  f_{ik}(U) = \sum_{k=1}^K \tilde \beta_{ik} = 0$ and so $\sum_{k=1}^K  \overline{ f^k (U)} = \sum_{k=1}^K\overline{ \tilde \beta^k} = 0$. Thus it suffices to check the diffusion term \emph{(}recall that $U\in\V_{\Pi}$ and that $\Delta$ acts as a matrix\emph{)}
\[
\sum_{k=1}^K (\Delta U)_{ik} = \sum_{k=1}^K \sum_{j\in V} \Delta_{ij}U_{jk} = \sum_{j\in V} \Delta_{ij} = (\Delta\mathbf{1})_i= 0.
\] 
For \eqref{mcSD} and \eqref{mcmSD}, if $U_n\in\V_\Pi$ then summing over $k$ the $f$ and $\tilde \beta$ again vanish, so
\[
\sum_{k=1}^K (U_{n+1})_{ik} = \sum_{k=1}^K(e^{-\tau\Delta} U_n)_{ik} = \sum_{k=1}^K\sum_{j\in V} (e^{-\tau\Delta})_{ij} (U_n)_{jk} = \sum_{j\in V} (e^{-\tau\Delta})_{ij} = (e^{-\tau\Delta}\mathbf{1})_i= 1.
\]
\end{nb}
\section{Conclusion}
In this paper, we have translated Rubinstein and Sternberg's mass-conserving Allen--Cahn flow \cite{RS} into the context of dynamics on graphs, and have proved existence, uniqueness and regularity properties of the resulting differential equation with a double-obstacle potential. Following \cite{Budd}, we have formulated a semi-discrete scheme for mass-conserving graph Allen--Cahn flow, proved that the mass-conserving graph MBO scheme emerges exactly as the $\lambda = 1$ special case of this semi-discrete scheme, and shown that the Lyapunov functional from \cite{Budd} remains a Lyapunov functional in the mass-conserving case.

Using the tools of convex optimisation, we have characterised the solutions of this mass-conserving semi-discrete scheme, allowing us to prove that: 
\begin{itemize}
\item As $\lambda\uparrow 1$, the semi-discrete solutions for a given $\lambda$ converge in $\V$ to a solution for $\lambda = 1$, yielding a choice function for MBO solutions.
\item As $\tau,\lambda \downarrow 0$ (and $\varepsilon$ fixed) the semi-discrete solutions converge pointwise to the Allen--Cahn solution.
\end{itemize} 

Finally, we have devised a formulation of mass-conserving multi-class Allen--Cahn flow on graphs (with the multi-obstacle potential). In future work, we seek to extend the results of this paper to the multi-class case, and so devise a choice function for the multi-class MBO scheme as a limit of the multi-class semi-discrete solutions. We shall then compare this method for solving the multi-class MBO scheme on graphs with others in the literature, e.g. the auction dynamics of Jacobs, Merkurjev and Esedo\=glu \cite{Auction}.
\section*{Acknowledgements}
This project has received funding from the European Union’s Horizon 2020 research and innovation programme under the Marie Skłodowska-Curie grant agreement No 777826.

Thanks to Dr. Carolin Kreisbeck for her suggestion of using weak* $L^\infty$ convergence for an alternative method for proving the convergence of the semi-discrete iterates. 

\end{document}